% !TeX program = pdflatex
%% 
%% Copyright 2007-2025 Elsevier Ltd
%% 
%% This file is part of the 'Elsarticle Bundle'.
%% ---------------------------------------------
%% 
%% It may be distributed under the conditions of the LaTeX Project Public
%% License, either version 1.3 of this license or (at your option) any
%% later version.  The latest version of this license is in
%%    http://www.latex-project.org/lppl.txt
%% and version 1.3 or later is part of all distributions of LaTeX
%% version 1999/12/01 or later.
%% 
%% The list of all files belonging to the 'Elsarticle Bundle' is
%% given in the file `manifest.txt'.
%% 
%% Template article for Elsevier's document class `elsarticle'
%% with numbered style bibliographic references
%% SP 2008/03/01
%% $Id: elsarticle-template-num.tex 272 2025-01-09 17:36:26Z rishi $
%%
\documentclass[preprint,12pt,sort&compress]{elsarticle}

%% Use the option review to obtain double line spacing
%% \documentclass[authoryear,preprint,review,12pt]{elsarticle}

%% Use the options 1p,twocolumn; 3p; 3p,twocolumn; 5p; or 5p,twocolumn
%% for a journal layout:
%% \documentclass[final,1p,times]{elsarticle}
%% \documentclass[final,1p,times,twocolumn]{elsarticle}
%% \documentclass[final,3p,times]{elsarticle}
%% \documentclass[final,3p,times,twocolumn]{elsarticle}
%% \documentclass[final,5p,times]{elsarticle}
%% \documentclass[final,5p,times,twocolumn]{elsarticle}

%% For including figures, graphicx.sty has been loaded in
%% elsarticle.cls. If you prefer to use the old commands
%% please give \usepackage{epsfig}

%% The amssymb package provides various useful mathematical symbols
\usepackage{amssymb}
%% The amsmath package provides various useful equation environments.
\usepackage{amsmath}
%% The amsthm package provides extended theorem environments
%% \usepackage{amsthm}

%% The lineno packages adds line numbers. Start line numbering with
%% \begin{linenumbers}, end it with \end{linenumbers}. Or switch it on
%% for the whole article with \linenumbers.
%% \usepackage{lineno}
\usepackage{amsmath,graphicx,amsfonts,amsthm,amssymb,setspace,mathtools,float,tikz,caption,fancyhdr,hyperref,bbm,scalerel,stackengine}
\stackMath
\journal{Journal of Functional Analysis}

\usetikzlibrary{calc}
%\graphicspath{{figures/}}
%\makeatletter
%\renewcommand{\theequation}{\thesection.\arabic{equation}}
%\@addtoreset{equation}{section}
%\makeatother

%\let\tmp\oddsidemargin
%\let\oddsidemargin\evensidemargin
%\let\evensidemargin\tmp
%\reversemarginpar
%notation

\newtheorem{lem}{Lemma}[section]
\newtheorem{prop}[lem]{Proposition}
\newtheorem{thm}[lem]{Theorem}
\newtheorem{cor}[lem]{Corollary}

\theoremstyle{definition}
\newtheorem{defi}[lem]{Definition}
\newtheorem{ex}[lem]{Example}
\newtheorem{asm}[lem]{Assumption}
\theoremstyle{remark}
\newtheorem{rem}[lem]{Remark}

\newcommand{\ol}[1]{\overline{#1}}
\newcommand{\ul}[1]{\underline{#1}}
\newcommand{\Mc}[1]{\mathcal{#1}}
\newcommand{\Mb}[1]{\mathbb{#1}}
\newcommand{\Mf}[1]{\mathfrak{#1}}
\newcommand{\Mr}[1]{\mathrm{#1}}
\renewcommand{\Cup}{\bigcup}

\newcommand{\core}{\Mc{F}\cap C_c(X)}
\newcommand{\deco}[1]{#1
	%	\savestack{\tmpbox}{\stretchto{%
			%			\scaleto{%
				%				\scalerel*[\widthof{\ensuremath{#1}}]{\kern-.6pt\bigwedge\kern-.6pt}%
				%				{\rule[-\textheight/2]{1ex}{\textheight}}%WIDTHMITED BIG WEDGE
				%			}{\textheight}% 
			%		}{0.5ex}}%
	%	\stackon[1pt]{#1}{\tmpbox}%
}

\newcommand{\supp}{\mathrm{supp}}

\newcommand{\sgn}{\mathrm{sgn}}
\newcommand{\zero}{\boldsymbol{0}}
\newcommand{\indi}{\mathbbm{1}}

\newcommand{\der}{\mathrm{d}}

\newcommand{\MR}[1]{\url{https://mathscinet.ams.org/mathscinet-getitem?mr=#1}}

\newcommand{\same}{\ul{\hspace{1.5cm}}}

\DeclareMathOperator{\LIP}{LIP}
\DeclareMathOperator{\Lip}{Lip}

\newcommand{\DeclareDescCond}[3]{%
	\newcounter{#1}%
	\expandafter\renewcommand\csname the#1\endcsname{(#3)}%
	\expandafter\newcommand\csname #2\endcsname[1]{%
		\refstepcounter{#1}%
		\item[\csname the#1\endcsname]\label{##1}%
	}%
}
\DeclareDescCond{energycond}{Econd}{E\arabic{energycond}}
\DeclareDescCond{energydashcond}{Edashcond}{E\arabic{energydashcond}'}
\DeclareDescCond{measurecond}{Mcond}{M\arabic{measurecond}}
\DeclareDescCond{normlemma}{Ncond}{N\arabic{normlemma}}
\DeclareDescCond{domimeas}{DMcond}{DM\arabic{domimeas}}
\DeclareDescCond{gammacond}{Gcond}{G\arabic{gammacond}}
\DeclareDescCond{bargammacond}{BGcond}{BG\arabic{bargammacond}}
\begin{document}

\begin{frontmatter}

%% Title, authors and addresses

%% use the tnoteref command within \title for footnotes;
%% use the tnotetext command for theassociated footnote;
%% use the fnref command within \author or \affiliation for footnotes;
%% use the fntext command for theassociated footnote;
%% use the corref command within \author for corresponding author footnotes;
%% use the cortext command for theassociated footnote;
%% use the ead command for the email address,
%% and the form \ead[url] for the home page:
%% \title{Title\tnoteref{label1}}
%% \tnotetext[label1]{}
%% \author{Name\corref{cor1}\fnref{label2}}
%% \ead{email address}
%% \ead[url]{home page}
%% \fntext[label2]{}
%% \cortext[cor1]{}
%% \affiliation{organization={},
%%             addressline={},
%%             city={},
%%             postcode={},
%%             state={},
%%             country={}}
%% \fntext[label3]{}

\title{\texorpdfstring{Construction of $p$-energy measures associated with strongly local $p$-energy forms}{Construction of p-energy measures associated with strongly local p-energy forms}}

%% use optional labels to link authors explicitly to addresses:
%% \author[label1,label2]{}
%% \affiliation[label1]{organization={},
%%             addressline={},
%%             city={},
%%             postcode={},
%%             state={},
%%             country={}}
%%
%% \affiliation[label2]{organization={},
%%             addressline={},
%%             city={},
%%             postcode={},
%%             state={},
%%             country={}}

\author{\texorpdfstring{K\^ohei Sasaya\corref{cor}\fnref{jsps}}{K\^ohei Sasaya}} %% Author name
\ead{sasaya@g.ecc.u-tokyo.ac.jp}
\ead[url]{https://koheisasaya.github.io}
\fntext[jsps]{JSPS Research Fellow (PD)}
\cortext[cor]{Corresponding author}
%% Author affiliation
\affiliation{organization={Graduate School of Mathematical Sciences, The University of Tokyo},%Department and Organization
            addressline={3-8-1, Komaba}, 
            city={Meguro},
            postcode={153-8914}, 
            state={Tokyo},
            country={Japan}}
%% Abstract
\begin{abstract}
We construct canonical $p$-energy measures associated with strongly local $p$-energy forms without assuming self-similarity. Here, $p$-energy forms are $L^p$-analogues of Dirichlet forms, which have recently been studied mainly on fractals. Furthermore, we prove that these measures satisfy the chain and Leibniz rules, and that such ``good'' energy measures are unique. A key ingredient is a $p$-energy analogue of Le~Jan's domination principle. Moreover, we show that the Korevaar–Schoen-type $p$-energy measures defined by Alonso-Ruiz and Baudoin~\cite{AB} coincide with our canonical $p$-energy measures.
\end{abstract}	

%%Graphical abstract
%\begin{graphicalabstract}
%\includegraphics{grabs}
%\end{graphicalabstract}

%%Research highlights
%\begin{highlights}
%\item We constructed $p$-energy measures associated with $L^p$-versions of Dirichlet forms.
%\item We do not make any assumptions of self-similarity of energies or underlying spaces.
%\item Measures are defined with limits of energies without Riesz–Markov–Kakutani’s theorem.
%\item Chain rule, Leibniz rule and uniqueness of measures are also proved.
%\item Our results are applicable to former results including Korevaar--Schoen type energies.
%
%\end{highlights}

%% Keywords
\begin{keyword}
%% keywords here, in the form: keyword \sep keyword
energy measure \sep $p$-energy form \sep Korevaar--Schoen-type $p$-energy \sep fractal \sep Clarkson's inequality
%% PACS codes here, in the form: \PACS code \sep code

%% MSC codes here, in the form: \MSC code \sep code
\MSC 28A80 \sep 31C25 \sep 31C45 \sep 31E05 \sep 46E36
%% or \MSC[2008] code \sep code (2000 is the default)

\end{keyword}

\end{frontmatter}

\section{Introduction}
Since the late 20th century, Sobolev spaces on non-smooth metric spaces have been studied extensively. One successful approach is based on ``upper gradients'' (see, e.g., \cite{AGS14a,Che,Sha}; see also \cite{Haj,HK}). This approach has also been used in recent work in differential geometry on non-smooth metric spaces (e.g., \cite{AGS14b}). 
Another approach is to construct Sobolev spaces on fractals using Dirichlet form theory. On many fractals, however, the upper-gradient approach may fail to yield a non-trivial first-order Sobolev space (for instance, on the standard Sierpi\'nski carpet (Figure \ref{figSC}) the Newtonian space collapses to $L^p$; see Remark~\ref{remthmunique}~(3).
\begin{figure}
\centering
\begin{minipage}{.49\linewidth}
\centering
\includegraphics[width=4.8cm]{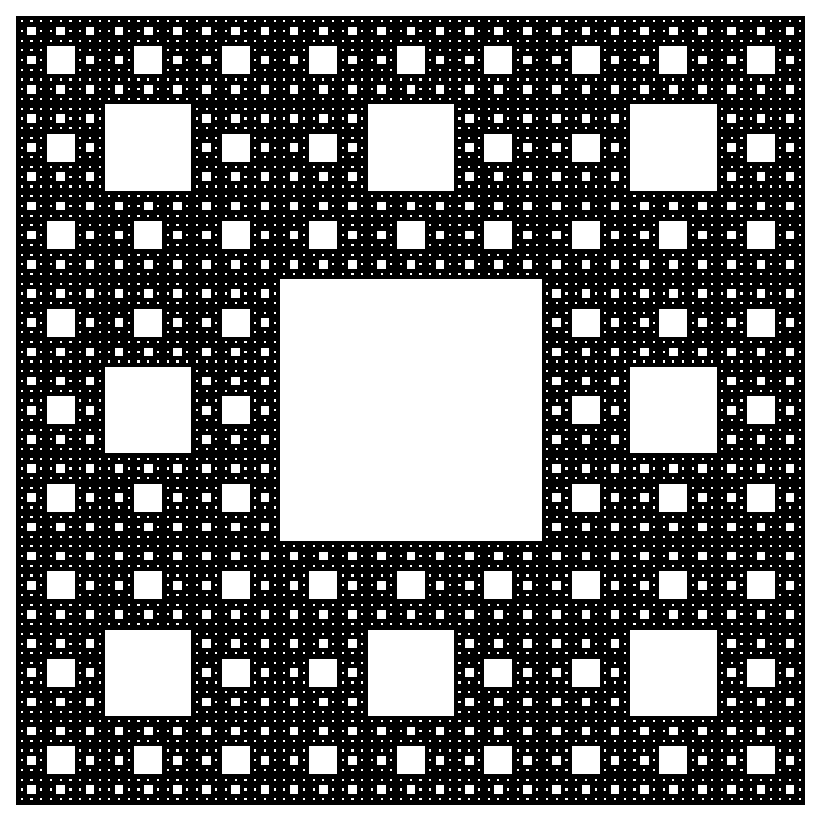}
\caption{Standard Sierpi\'nski Carpet} \label{figSC}
\end{minipage}
\begin{minipage}{.49\linewidth}
\centering
\includegraphics[width=5cm]{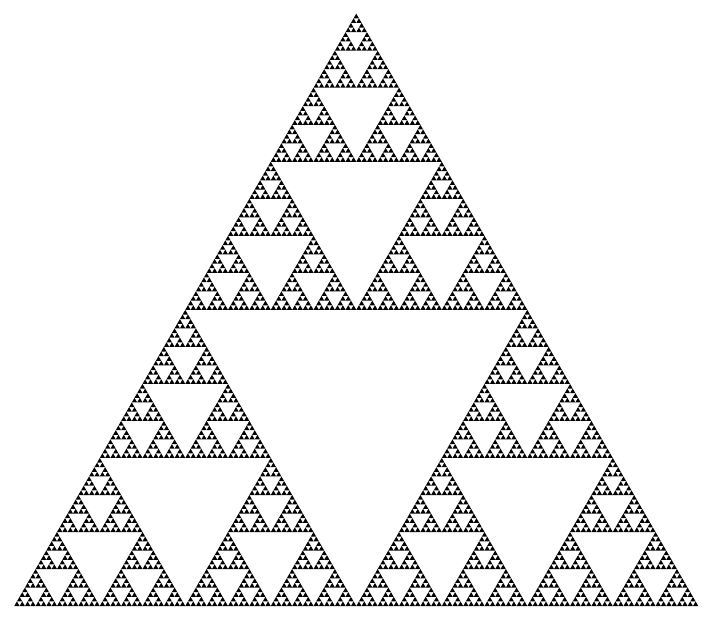}
\caption{Sierpi\'nski Gasket} \label{figSG}
\end{minipage}
\end{figure}
\par 

The study of analysis on fractals began in the 1980s. Inspired by Kesten's work~\cite{Kes86a,Kes86b}, Goldstein~\cite{Gol} and Kusuoka~\cite{Kus1} independently constructed Brownian motion on the Sierpi\'nski gasket (see Figure \ref{figSG}) as a scaling limit of random walks on graphs approximating the gasket. In the seminal paper~\cite{BP}, Barlow and Perkins provided detailed estimates of the transition density of this Brownian motion. Following these studies, Fukushima and Shima~\cite{FS} constructed, in 1992, the Dirichlet form on the gasket corresponding to this Brownian motion; this Dirichlet form can be viewed as a counterpart of a Sobolev space. (See also the paper~\cite{Kig89}, which directly constructs the associated Laplacian.) The scope of the subject is not limited to the gasket: important examples also include infinitely ramified self-similar sets such as the standard Sierpi\'nski carpet. Many studies have been devoted to constructing Dirichlet forms on fractals (see, e.g., \cite{HaK,Kaj21AG,Kig93,Kig12,Kig23,KZ}), the corresponding Markov processes (see, e.g., \cite{BB89,BB99,Ham,Lin}), and their properties (see, e.g., \cite{BBKT,BK,KL,Hin17,Kum}; see also \cite{Bar,Kig01,Str}). 
\par

Energy measures have also been studied in analysis on fractals. Let $(\Mc{E},\Mc{F})$ be a regular Dirichlet form on $L^2(X,d,\Mf{m})$. For $f\in\core$, the \emph{energy measure of $f$ associated with $(\Mc{E},\Mc{F})$}, denoted by $\mu_{\langle f\rangle}$, is defined as the unique Radon measure satisfying
\begin{equation}\label{defifunct4em}
	\int_{X} g\der\mu_{\langle f\rangle}=\Mc{E}(f,fg)-\frac{1}{2}\Mc{E}(f^2,g)\quad (\text{for any }g\in \core).
\end{equation}
Here $C_c(X)$ denotes the set of continuous functions with compact support. $\mu_{\langle f\rangle}$ is well-defined by the Riesz--Markov--Kakutani representation theorem, and it extends canonically to a general $f\in\Mc{F}$.
In smooth settings, $\der\mu_{\langle f\rangle}$ corresponds to $|\nabla f|^2\der x$. However, on fractals, $\mu_{\langle f\rangle}$ may be mutually singular with respect to the reference measure. Energy measures not only enable us to localize $\Mc{E}(f,f)$ for local regular Dirichlet forms, but are also closely related to the question of whether the Laplacian and the associated Markov process behave as in the smooth case after changing the reference measure. (See, e.g., \cite{Ram,Stu95,Stu96} for general settings and \cite{Kus2,Kig08,Kaj12,KoZ,KM} for fractals.) For further details in this direction, see the surveys \cite{Kaj13,Kaj21em}. 
\par

Recently, $L^p$-versions of Dirichlet forms on fractals have been considered in \cite{AES,Bau,CGQ,CGYZ,CheGYZ,GYZ,HPS,KSKS,KSdiff,KS25,KSx,Kig23,MS,Shi,Yan} to construct counterparts of $W^{1,p}$ on fractals. (One further motivation is an application to an attainment problem for dimensions of fractals arising in quasiconformal geometry; see \cite{MS}.) In this paper, we refer to such $L^p$-versions $(\Mc{E},\Mc{F})$ of Dirichlet forms as $p$-energy forms. A precise definition is given in Section \ref{secpre}. This naturally leads to the question of how to construct $p$-energy measures for such $p$-energy forms in an axiomatic setting.
\par

The main difficulty is that, for general $p$-energy forms, the usual functional identity \eqref{defifunct4em} has no direct $p$-counterpart that yields a usable positivity argument. In this paper, we construct canonical $p$-energy measures under basic axioms (without any self-similarity) and establish their uniqueness and fundamental calculus rules such as the chain and Leibniz rules.
\par

To explain this point, we recall the following observation. Under the assumption of the $p$-Clarkson inequalities (counterparts of the parallelogram law), although one can define $\Mc{E}(f;g)$ in \cite{KSdiff}, the map $\Mc{E}(\cdot;\cdot)$ is linear only in the second variable. Thus, it is difficult to obtain a positive measure from a functional identity corresponding to \eqref{defifunct4em}. In previous studies \cite{AES,KSdiff,KS25,KSx,MS,Shi}, ``$p$-energy measures'' were constructed when $X$ was a self-similar fractal, and their constructions depended heavily on the self-similarity of the underlying $p$-energy form. Hence, as mentioned in \cite[Section 6.3]{Kig23} and \cite[Problem 10.4]{MS}, it was open whether one could construct such measures without any self-similarity. Beyond the self-similar setting, only a few constructions are known: in \cite{CheGYZ}, Chen, Gao, Yu and Zhang constructed $p$-energy forms and associated energy measures on scale-irregular Vicsek sets without self-similarity, but their construction relies on the one-dimensional structure of the sets. In addition, $p$-energy measures associated with Korevaar--Schoen-type $p$-energy forms have been considered in \cite{AB,KSKS}. In \cite{AB}, the forms and energies on PI spaces (see Definition \ref{defPIsp}) were constructed via De Giorgi's $\Gamma$-convergence; however, some fundamental properties of these energy measures, such as the chain and Leibniz rules, remained unknown. In \cite{KSKS}, under the ``weak-monotonicity type estimate,'' Korevaar--Schoen-type $p$-energy forms and associated energy measures satisfying fundamental properties were constructed within another framework (see Section \ref{secKS} for details).
\par

In the present paper, we only impose assumptions on the $p$-energy forms and on the underlying metric measure spaces that correspond to those for strongly local, regular Dirichlet spaces (see Section \ref{secpre} for details). These assumptions cover most $p$-energy forms studied in the literature on fractals and PI spaces, and in particular the previously constructed $p$-energy measures in \cite{AES,AB,CheGYZ,KSdiff,KSKS,KS25,KSx,MS,Shi}, including the cases of the Sierpi\'nski gasket and the standard Sierpi\'nski carpet, fit into our framework. 
\par

Moreover, on a class of fractals including the Sierpi\'nski gasket, canonical $p$-energies constructed directly (typically via discrete approximations) exhibit phenomena such as mutual singularity of the $p$-energy measure of $f$ and the $q$-energy measure of $g$ for $p\neq q$ (see \cite{KSx,KS25,AES}). In contrast, if one constructs $p$-Sobolev spaces by the upper-gradient method (as in \cite{Sha}), or starts from a Dirichlet form $(E,F)$ and takes $\Mc{F}_{(p)}\subset F\cap L^p(\Mf{m})$ (as in \cite{BBR,Kuw}), then the associated energy measures are available in a straightforward way, and such a phenomenon does not occur.
\par

Our main result is Theorem \ref{thmunique}. A key ingredient is a domination principle for $p$-energy measures. Roughly speaking, it states that an inequality between energies implies the corresponding inequality for the associated measures. This principle was proved by Le~Jan~\cite{LeJ} when $p=2$. This principle is also of independent interest and may be useful in applications; see Theorem \ref{thmdomination}.
\par

By the uniqueness statement in Theorem \ref{thmunique}, our canonical $p$-energy measures agree with those constructed previously in \cite{AES,AB,CheGYZ,KSdiff,KSKS,KS25,KSx,MS,Shi}. Unfortunately, we do not obtain new examples of $p$-energy forms for which $p$-energy measures are constructed directly from our results, since the known $p$-energy forms are essentially included in the cases of self-similar fractals or Cheeger spaces. Nevertheless, our results are applicable to proving properties of known energy measures; one example is the uniqueness and the chain and Leibniz rules for the energy measures constructed in \cite{AB}.
\par
The remainder of this paper is organized as follows. Section \ref{secpre} is devoted to the statement of our main results. In Section \ref{secunique}, we prove the domination principle for $p$-energy measures. Sections \ref{secbas}--\ref{secchain} are devoted to the proof of the main theorem. In Section \ref{secbas}, we show some basic properties of $p$-energy forms. In Section \ref{secconst}, we construct $\mu_{\langle f\rangle}$ for $f\in\core$ such that \ref{condmeasene} holds. In Section \ref{secineqs}, we prove \ref{condmeasloc}--\ref{condmeasclk} and define the $p$-energy measures for general $f\in\Mc{F}$. In Section \ref{secchain}, we prove \ref{condmeaschain}, \ref{condmeasLeibniz} and some corollaries. Lastly, we review previous results on Korevaar--Schoen-type $p$-energy forms and apply our results to them in Section \ref{secKS}.
\subsection*{Notation}
In this paper, we use the following notation.
\begin{itemize}
	\item For $a,b\in\Mb{R}$, we write $a\vee b=\max\{a,b\}, a\wedge b=\min\{a,b\}$ and $a^+=a\vee 0$. We use the same operations for $\Mb{R}$-valued functions (e.g., $f\vee g=\max\{f,g\}$). 
	\item We define $\sgn:\Mb{R}\to\Mb{R}$ by $\sgn(0)=0$ and $\sgn(t)=t/|t|\ (t\ne0)$.
	\item $\indi_A$ denotes the indicator function of a set $A$.
	\item We write $\zero:=(0,\dots,0)\in\Mb{R}^n$ for $n\in\Mb{N}$.
\end{itemize}
Let $(X,d)$ be a metric space. We also use the following notation.
\begin{itemize}
	\item We denote the Borel $\sigma$-algebra of $(X,d)$ by $\Mc{B}(X,d)$ (or $\Mc{B}(X)$ if $d$ is clear). 
	\item $B_d(x,r)$ denotes the open ball $\{y\in X\mid d(x,y)<r\}$.
	\item For $A, B\subset X$, we write $d(A,B):=\inf\{d(x,y)\mid x\in A,\ y\in B\}$.
	\item $U\Subset V$ means that $\ol{U}$ is compact and $\ol{U}\subset V$. 
	\item For $f:X\to\Mb{R}$, $\supp(f):=\ol{\{f\ne0\}}$. We write $C_c(X):=\{f\in C(X)\mid \supp(f)\text{ is compact}\}$.
	%\item For a Borel measure $\Mf{m}$ on $X$, $\supp[\Mf{m}]$ denotes $X\setminus\Cup\{U\subset X\mid U\text{ is open}, \Mf{m}(U)=0\}$. 
	\item We define \begin{gather*}
		\LIP(f)=\sup_{x,y\in X, x\ne y}\frac{|f(x)-f(y)|}{d(x,y)}\quad (f:X\to\Mb{R}),\\
		\LIP(X)=\LIP(X,d):=\{f:X\to\Mb{R}\mid\LIP(f)<\infty\}.
	\end{gather*}
	In particular, if $X$ is a subset of a Euclidean space, we use the Euclidean distance unless otherwise stated.
\end{itemize}
\section{Main result}\label{secpre}
Throughout this paper, we work under the following assumption.
\begin{asm}\label{asmspace}
Assume that $(X,d)$ is a locally compact, separable metric space and $\Mf{m}$ is a Radon measure with full support on $(X,d)$. We fix any $p\in(1,\infty)$.
\end{asm}
To state our assumptions for the main theorem, we first define $p$-energy forms in a broader sense according to \cite{KSdiff}.
\begin{defi}[$p$-Energy form: {\cite[Definition 3.1]{KSdiff}}]
	Let $(\Omega,\Mc{M},\mu)$ be a measure space. Write $L^0(\Omega,\mu)$ or simply $L^0(\mu)$ for the space of real-valued $\Mc{M}$-measurable functions modulo $\mu$-a.e. equality. We call $(\Mc{E},\Mc{F})$ a \emph{$p$-energy form} on $(\Omega,\mu)$ if $\Mc{F}$ is a linear subspace of $L^0(\Omega,\mu)$ and $\Mc{E}:\Mc{F}\to[0,\infty)$ is a functional such that $\Mc{E}^{1/p}$ is a seminorm on $\Mc{F}$.
\end{defi}
We next introduce the $p$-Clarkson inequalities.
\begin{defi}[$p$-Clarkson inequalities]\label{defCLKpre}
	Let $\Phi$ be a nonnegative function on a vector space, and $A$ be a linear subspace of the domain of $\Phi$.
	We say that the \emph{$p$-Clarkson inequalities} for $\Phi$ hold on $A$ if
	\begin{itemize}
		\item $p\in(1,2)$ and \eqref{eqCLK1}--\eqref{eqCLK2} hold for all $u,v\in A$, or
		\item $p\in[2,\infty)$ and \eqref{eqCLK3}--\eqref{eqCLK4} hold for all $u,v\in A$,
	\end{itemize}
	where
	\begin{align}
		2\left(\Phi(u)^{\frac{p}{p-1}}+\Phi(v)^{\frac{p}{p-1}}\right)^{p-1}
		&\le \Phi(u+v)^p+\Phi(u-v)^p \label{eqCLK1}\tag{CI1}\\
		2\left(\Phi(u)^{p}+\Phi(v)^{p}\right)
		&\ge \Phi(u+v)^p+\Phi(u-v)^p \label{eqCLK2}\tag{CI2}
		\shortintertext{and}
		2\left(\Phi(u)^{\frac{p}{p-1}}+\Phi(v)^{\frac{p}{p-1}}\right)^{p-1}
		&\ge \Phi(u+v)^p+\Phi(u-v)^p \label{eqCLK3}\tag{CI3}\\
		2\left(\Phi(u)^{p}+\Phi(v)^{p}\right)
		&\le \Phi(u+v)^p+\Phi(u-v)^p. \label{eqCLK4}\tag{CI4}
	\end{align}
\end{defi}
These inequalities serve as alternatives to the parallelogram law for $L^p$-like seminorms. The $p$-Clarkson inequalities for the $L^p$ norm were proved in \cite{Clk}.\footnote{To be precise, only the case $L^p([0,1])$ was considered in \cite{Clk}, but the same proof works for general measure spaces.} Roughly speaking, \eqref{eqCLK1} and \eqref{eqCLK4} express uniform convexity, while \eqref{eqCLK2} and \eqref{eqCLK3} express uniform smoothness. In fact, the uniform convexity of $(\Mc{F},(\Mc{E}+\|\cdot\|^p_{L^p(\Mf{m})})^{1/p})$ and the differentiability of the map $t\mapsto\Mc{E}(f+tg)$ for $f,g\in\Mc{F}$ were proved in \cite{KSdiff}.

Our main theorem is the following.
\begin{thm}\label{thmunique}
Let $(\Mc{E},\Mc{F})$ be a $p$-energy form satisfying the following conditions.
	\begin{description}
	\Econd{defBanach} $\Mc{F}\subset L^p(\Mf{m})$. Moreover, $(\Mc{F},\|\cdot\|_{\Mc{E}_1})$ is a Banach space, where $\|\cdot\|_{\Mc{E}_1}=\bigl(\Mc{E}+\|\cdot\|^p_{L^p(\Mf{m})}\bigr)^{1/p}$.
	\Econd{defMarkov} Let $f\in\core$. Then $g:=(f\wedge 1)^+\in\core$ and $\Mc{E}(g)\le\Mc{E}(f)$. 
	\Econd{defstrloc} (Strong local property) $\Mc{E}(f+g)=\Mc{E}(f)+\Mc{E}(g)$ if $f,g\in\core$ and $\supp(f)\cap\supp(g+a)=\emptyset$ for some $a\in\Mb{R}$.
	\Econd{defreg}	(Regularity) $\core$ is dense in both $(\Mc{F},\|\cdot\|_{\Mc{E}_1})$ and $(C_c(X),\|\cdot\|_{\infty})$.
	\Econd{defclk} The $p$-Clarkson inequalities for $\Mc{E}^{1/p}$ hold on $\Mc{F}$. 
\end{description}
 Then there exists a unique family $\{\mu_{\langle f\rangle}\}_{f\in\Mc{F}}$ of Radon measures on $(X,d)$ satisfying \ref{condmeasene}--\ref{condmeasclk}; we call such a family \emph{canonical $p$-energy measures}.
	\begin{description}
		\Mcond{condmeasene} $\mu_{\langle f\rangle}(X)=\Mc{E}(f)$ for any $f\in\Mc{F}$.
		\Mcond{condmeasloc} If $f,g\in\core$, $A\in\Mc{B}(X)$ and $(f-g)|_A$ is constant, then $\mu_{\langle f\rangle}(A)=\mu_{\langle g\rangle}(A)$.
        \Mcond{condmeasnorm} For any $A\in\Mc{B}(X)$,\,the functional\,$f\mapsto\mu_{\langle f\rangle}(A)^{1/p}$ is a seminorm on $\Mc{F}$.
		\Mcond{condmeasclk} For any $A\in\Mc{B}(X)$, the $p$-Clarkson inequalities for the functional $f\mapsto\mu_{\langle f\rangle}(A)^{1/p}$ hold on $\Mc{F}$.
	\end{description}
	Moreover, this family satisfies \ref{condmeaschain} and \ref{condmeasLeibniz}.
	\begin{description}
		\Mcond{condmeaschain} (Chain rule.) For any $f, g\in\core$ and piecewise $C^1$ functions $\varphi,\psi:\Mb{R}\to\Mb{R}$ with $\varphi(0)=\psi(0)=0,$
		\begin{equation*}
			\der\mu_{\langle \varphi\circ f;\psi\circ g\rangle}
			=\sgn(\varphi'\circ f)\,|\varphi'\circ f|^{p-1}(\psi'\circ g)\,\der \mu_{\langle f;g\rangle}
		\end{equation*}
		holds, where $\mu_{\langle f;g\rangle}$ is a signed measure defined by
		$\mu_{\langle f;g\rangle}\!:=\frac{1}{p}\frac{\der}{\der t}\mu_{\langle f+tg\rangle}\big|_{t=0}$.
		\Mcond{condmeasLeibniz} (Leibniz rule.)
		$\der\mu_{\langle f;gh\rangle}=g\der\mu_{\langle f;h\rangle}+h\der\mu_{\langle f;g\rangle}$ holds for any $f,g,h\in\core$.
	\end{description}
\end{thm}
\begin{rem}\label{remthmunique}
\begin{enumerate}
	\item The assumptions of this theorem are $p$-analogs of the properties of strongly local, regular Dirichlet forms. Condition \ref{defMarkov} corresponds to a form of the Markov property of Dirichlet forms (see, e.g., \cite{FOT}). For other equivalent formulations of \ref{defMarkov}, see Proposition \ref{propnMarkov} and Remark \ref{remWMarkov}. 
	\item This theorem is applicable to many fractal settings, and recovers the results in \cite{AES,AB,CheGYZ,KSdiff,KSKS,KS25,KSx,MS,Shi}: see Example \ref{exSC} below. Other applicable cases are discussed in Section \ref{secKS}.
	\item Contrary to Example \ref{exSC}, the upper-gradient method is generally not useful for analysis on fractals.
	For example, the Newtonian space $\Mc{N}^{1,p}(X,\Mf{m})$ on the standard Sierpi\'nski carpet coincides with $L^p(X,\Mf{m})$; see \cite[Proposition 4.3.3]{MT} and \cite[Proposition 7.1.33]{HKST}.
	\item Partial results still hold for cases where \ref{defclk} fails (cf.\ \cite{Eri, EM}): see Theorem \ref{thmdomination}.
\end{enumerate}
\end{rem}
\begin{ex}\label{exSC}
Let $\Phi:2^{\Mb{R}^2}\to2^{\Mb{R}^2}$ be defined by $\Phi(A)=\Cup_{s\in S}\varphi_s(A)$, where $S=\{-1,0,1\}^2\setminus\{(0,0)\}$ and $\varphi_s$ is the $\frac{1}{3}$-scaling centered at $s$. The unique nonempty compact set $X$ with $\Phi(X)=X$ is called the standard Sierpi\'nski carpet (Figure \ref{figSC}). We write $\varphi_{s_1s_2\dots s_n}=\varphi_{s_1}\circ\varphi_{s_2}\circ\cdots\circ\varphi_{s_n}$ for any $n\ge1$ and $s_1s_2\dots s_n\in S^n$. Set $d$ to be (the restriction of) the Euclidean distance and $\Mf{m}$ to be the unique Radon measure on $(X,d)$ with $\Mf{m}(\varphi_{s_1s_2\dots s_n}(X))=8^{-n}$ for any $n\ge1$ and $s_1s_2\dots s_n\in S^n$. For $n\ge1$, we let 
\[E_n=\left\{(u,v)\in S^n\times S^n\mid u\ne v\text{ and }\varphi_u(X)\cap \varphi_v(X)\ne\emptyset\right\}.\]
Define $M_n:L^0(\Mf{m})\to\Mb{R}^{S^n}$ and $\Mc{E}_n:L^0(\Mf{m})\to[0,\infty)$ by
\[(M_nf)(u)=\frac{\int_{\varphi_u(X)}f\der \Mf{m}}{\Mf{m}(\varphi_u(X))},\quad \Mc{E}_n(f)=\frac{1}{2}\sum_{(u,v)\in E_n}\left|(M_nf)(u)-(M_nf)(v)\right|^p.\]
Then, there exist $C,\rho_p>0$ and a $p$-energy form $(\Mc{E},\Mc{F})$ such that
\begin{gather*}
\Mc{F}=\left\{f\in L^p(\Mf{m})\mid \sup_{n}\rho_p^n\Mc{E}_n(f)<\infty \right\},\\ C^{-1}\sup_n\rho_p^n\Mc{E}_n(f)\le \Mc{E}(f)\le C\liminf_{n\to\infty}\rho_p^n\Mc{E}_n(f)
\end{gather*}
for any $f\in\Mc{F}$, and \ref{defBanach}--\ref{defclk} hold. Moreover, $(\Mc{E},\Mc{F})$ has self-similarity and symmetry in the following sense.
\begin{itemize}
    \item For any $f\in\Mc{F}$ and $s\in S$, we obtain $f\circ\varphi_s\in\Mc{F}$ and $\Mc{E}(f)=\rho_p\sum_{s\in S}\Mc{E}(f\circ\varphi_s)$.
    \item For any $f\in\Mc{F}$ and isometry $\Xi:X\to X$, we have $f\circ\Xi\in\Mc{F}$ and $\Mc{E}(f)=\Mc{E}(f\circ\Xi)$.
\end{itemize}
See \cite{MS} for further details and generalization.
\end{ex}

\section{Domination principle}\label{secunique}
In this section, we prove the domination principle for $p$-energy measures, which is the key ingredient for the proof of uniqueness statement in Theorem \ref{thmunique}.
\begin{thm}[Domination principle]\label{thmdomination}
	Let $(\Mc{E},\Mc{F}), (\tilde{\Mc{E}},\Mc{F})$ be $p$-energy forms with a common domain, and let $\{\mu_{\langle f\rangle}\}_{f\in\Mc{F}},\{\nu_{\langle f\rangle}\}_{f\in\Mc{F}}$ be families of Radon measures. Assume that both $(\Mc{E},\Mc{F})$ and $(\tilde{\Mc{E}},\Mc{F})$ satisfy \ref{defBanach}--\ref{defreg}, and that $\{\mu_{\langle f\rangle}\}_{f\in\Mc{F}}$ (resp. $\{\nu_{\langle f\rangle}\}_{f\in\Mc{F}}$) satisfies \ref{condmeasene}, \ref{condmeasloc} and the following conditions \ref{condmeaspm} and \ref{condmeaslim} with respect to $(\Mc{E},\Mc{F})$ (resp. $(\tilde{\Mc{E}},\Mc{F})$).
	\begin{description}
	\Mcond{condmeaspm} $\mu_{\langle f\rangle}=\mu_{\langle -f\rangle}$ for any $f\in\Mc{F}$.
\Mcond{condmeaslim} For $f,f_n\in\Mc{F}\ (n\ge1)$, if $\lim_{n\to\infty}\Mc{E}(f-f_n)=0$, then $\lim_{n\to\infty}\mu_{\langle f_n\rangle}(A)=\mu_{\langle f\rangle}(A)$ for any $A\in\Mc{B}(X)$.
	\end{description}
	 Then, $\mu_{\langle f\rangle}(A)\le \nu_{\langle f\rangle}(A)$ for any $f\in\Mc{F}$ and $A\in\Mc{B}(X)$, whenever $\Mc{E}(f)\le\tilde{\Mc{E}}(f)$ for any $f\in\core$.
\end{thm}
The proof of Theorem \ref{thmdomination} is essentially divided into two steps: the first step is to prove the limit formula \eqref{eqlimmeas}, while the second step is a standard regularity argument. We define $T_n, S_n^a \in C(\Mb{R})\ (n\in\Mb{N}, a\in\Mb{R})$ by
\begin{gather*}
	T_n(t):=\min_{k\in\Mb{Z}}\left|t-2^{-(n-1)}k\right|,\ \quad
	S_n^a(t):=\left((-t+ a+2^{-n})\wedge 2^{-n}\right)^+.
\end{gather*}
For $f,g\in\core$ and $a\in\Mb{R}$, We also define
\[f_n^{g,a}:=(T_n\circ f)\wedge (S_n^{a}\circ g).\]
For an open interval $I=(a,b)$, $\eta_I\in C(\Mb{R})$ denotes the shifted truncation map defined by
\[
\eta_{I}(t):=(t\vee a)\wedge b-(0\vee a)\wedge b.
\]

\begin{lem}\label{lemtrunc}
    Let $f\in\core$. Then $\eta_{(a,b)}\circ f\in \core$ for any open interval $(a,b)$. Moreover, for any $g\in\core$, $n\in\Mb{N}$, and $a\in\Mb{R}$, we have $T_n\circ f\in\core$ and $f_n^{g,a}\in\core$.
\end{lem}

\begin{proof}
	Clearly, $\eta_{(0,b)}\circ f, \eta_{(a,0)}\circ f\in\core$ for $a\le0\le b$ because $\|f\|_{\infty}<\infty$. Since
	\[
	\eta_{(a,b)}=\begin{cases}
		\eta_{(0,b)}-\eta_{(0,a)}& (a>0),\\
		\eta_{(0,b)}+\eta_{(a,0)} & (a\le0\le b),\\
		\eta_{(a,0)}-\eta_{(b,0)} &(b<0),
	\end{cases}
	\]
	we have $\eta_I\circ f\in \core$ for any open interval $I$.
	Let $M\in\Mb{N}$ with $M>\|f\|_{\infty}$. Then
	\[
	T_n\circ f=\sum_{k=-2^nM}^{2^nM-1}(-1)^k \eta_{I_{n,k}}\circ f\in\core,
	\quad \left(I_{n,k}:=(2^{-n}k,2^{-n}(k+1))\right).
	\]
	Finally, taking $I=(-a-2^{-n},-a)$ we obtain
	\begin{align*}
		f_n^{g,a}
		=&\left(T_n\circ f\right)\wedge\bigl(-\eta_I \circ(-g)+S_n^a(0)\bigr)\\
		=&\frac{T_n\circ f+(-\eta_I\circ(-g))}{2}
		-\frac{|T_n\circ f+(-\eta_I\circ(-g))-S_n^a(0)|-S_n^a(0)}{2} \\
		=&\begin{multlined}[t][.85\linewidth]
			\frac{1}{2}\bigl(T_n\circ f-\eta_I\circ(-g)
			+\eta_{(-\infty,-S_n^a(0))}\circ \left(T_n\circ f+\eta_I\circ(-g)\right)\\
			-\eta_{(-S_n^a(0),\infty)}\circ \left(T_n\circ f+\eta_I\circ(-g)\right)\bigr),
		\end{multlined}
	\end{align*}
	because $S_n^a(0)\ge0$. Hence $f_n^{g,a}\in \core$.
\end{proof}

\begin{lem}\label{lemlimmeas}
	Let $\{\mu_{\langle f\rangle}\}_{f\in\Mc{F}}$ be a family of Radon measures satisfying \ref{condmeasene}, \ref{condmeasloc} and \ref{condmeaspm}. Then
	\begin{equation}\label{eqlimmeas}
		\mu_{\langle f\rangle}(\{g\le a\})=\lim_{n\to\infty}\Mc{E}(f_n^{g,a})
	\end{equation}
	for any $f,g\in\core$ and $a\in\Mb{R}$.
\end{lem}

\begin{proof}
	Note that $\mu_{\langle T_n\circ f\rangle}=\mu_{\langle f\rangle}$ by \ref{condmeaspm} and \ref{condmeasloc}. Since
	\begin{gather*}
		\{g\le a\}\subset A_n:=\{T_n\circ f\le S_n^a\circ g,\ S_n^a\circ g\ne0\}\subset \{g\le a+2^{-n}\},\\
		\{0<S_n^a\circ g< T_n\circ f\}\subset\{a<g<a+2^{-n}\},
	\end{gather*}
	and $\mu_{\langle 0\rangle}=0$ by \ref{condmeasene}, we have,
	\begin{align*}
		&\mu_{\langle  f\rangle}(\{g\le a\})\\
		\le& \mu_{\langle T_n\circ f\rangle}(A_n)
		+\mu_{\langle -g\rangle}(\{0<S_n^a\circ g<T_n\circ f\})
		+\mu_{\langle 0\rangle}(\{S_n^a\circ g=0\})\\
		=& \Mc{E}(f_n^{g,a})\\
		\le& \mu_{\langle  f\rangle}(\{g\le a+2^{-n}\})
		+\mu_{\langle -g\rangle}(\{a<g<a+2^{-n}\}).
	\end{align*}
	(The equality follows from \ref{condmeasene} and \ref{condmeasloc}.) Letting $n\to\infty$, we obtain \eqref{eqlimmeas}.
\end{proof}

\begin{lem}\label{lemapproxopen}
	Let $U\subset X$ be open. Then there exist $g_i\in\core\ (i\in\Mb{N})$ such that $\{\{g_i\le -1/2\}\}_{i\ge 1}$ is an increasing sequence and $\cup_{i\ge 1}\{g_i\le -1/2\}=U$.
\end{lem}

\begin{proof}
	Since $(X,d)$ is locally compact and separable, there exists a sequence of open subsets $\{U_i\}_{i\ge 1}$ with $U_i\Subset U_{i+1}\ (i\ge1)$ and $\cup_{i\ge 1}U_i=U$.
	Let
	\[
	h_i(x)=0\wedge\left(\frac{d(U_i,x)}{d(U_i,X\setminus U_{i+1})}-1\right)\in C_c(X).
	\]
	Then there exists $g_i\in\core$ with $\|g_i-h_i\|_{\infty}\le 1/3$ by \ref{defreg}. Hence $\ol{U_i}\subset \{g_i\le -1/2\}\subset U_{i+1}$, and the claim follows.
\end{proof}

\begin{proof}[Proof of Theorem \ref{thmdomination}]
	For $f\in\core$ and an open set $A\subset X$, the claim follows from Lemmas \ref{lemlimmeas} and \ref{lemapproxopen}. The other cases reduce to this one by \ref{defreg}, \ref{condmeaslim}, and the outer regularity of Radon measures.
\end{proof}

\section{\texorpdfstring{Proof of Theorem \ref{thmunique} (Part I): basic properties of $p$-energy forms}{Basic properties of p-energy forms}}\label{secbas}
To prepare for the proof of the existence part of Theorem \ref{thmunique}, we establish several basic properties of elements of $\Mc{F}$ and of convergence in $(\Mc{F},\|\cdot\|_{\Mc{E}_1})$, including the Markov property for normal contractions (Proposition \ref{propnMarkov}). From this section to the end of Section \ref{secchain}, we work under the following assumption.
\begin{asm}\label{asmform}
$(\Mc{E},\Mc{F})$ is a $p$-energy form satisfying \ref{defBanach}--\ref{defclk}.
\end{asm}
First, we note that $(\Mc{F},\|\cdot\|_{\Mc{E}_1})$ is reflexive by \cite[Proposition 3.13]{KSdiff}; this essentially follows from \ref{defclk}, and will be used repeatedly.

\begin{lem}\label{lemconvconv}
	Let $f, f_i\in L^p(\Mf{m})\ (i\ge1)$ with $f_i\to f$ in norm. Assume that $f_i\in\Mc{F}\ (i\ge 1)$ and that
	\begin{equation}\label{eqconv}
		\infty>\liminf_{i,j\to\infty}\Mc{E}\left(\frac{f_i+f_j}{2}\right)\ge \limsup_{i\to\infty}\Mc{E}(f_i).
	\end{equation}
	Then $f\in\Mc{F}$ and $f_i\to f$ in $(\Mc{F},\|\cdot\|_{\Mc{E}_1})$. In particular, \eqref{eqconv} holds if
	$\Mc{E}(f_{i_3})\le \Mc{E}\left(\frac{f_{i_1}+f_{i_2}}{2}\right)$ for any $i_1\le i_2\le i_3$.
\end{lem}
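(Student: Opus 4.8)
The plan is to show that the hypotheses force $\{f_n\}_{n\ge 1}$ to be a Cauchy sequence in the Banach space $(\Mc{F},\Mc{E}_1)$, which by completeness \eqref{defBanach} has a limit in $\Mc{F}$; since $\Mc{E}_1\ge\|\cdot\|_{L^p(\Mf{m})}$, that limit agrees $\Mf{m}$-a.e.\ with the $L^p$-limit $f$, so $f\in\Mc{F}$ and $f_n\to f$ in $(\Mc{F},\Mc{E}_1)$, as required. Because $\Mc{E}_1(f_n-f_m)^p=\Mc{E}(f_n-f_m)+\|f_n-f_m\|_{L^p(\Mf{m})}^p$ and the $L^p$-convergence makes $\|f_n-f_m\|_{L^p(\Mf{m})}\to 0$ as $n,m\to\infty$, the whole matter reduces to proving $\Mc{E}(f_n-f_m)\to 0$.

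First I would pin down the behaviour of the energies. Since $\Mc{E}^{1/p}$ is a seminorm, $\Mc{E}\bigl(\tfrac{f_n+f_m}{2}\bigr)^{1/p}\le\tfrac12\bigl(\Mc{E}(f_n)^{1/p}+\Mc{E}(f_m)^{1/p}\bigr)$; taking $\limsup$ and $\liminf$ over $n,m\to\infty$ and combining with \eqref{eqconv} forces all four of $\limsup_n\Mc{E}(f_n)$, $\liminf_n\Mc{E}(f_n)$, $\limsup_{n,m}\Mc{E}\bigl(\tfrac{f_n+f_m}{2}\bigr)$ and $\liminf_{n,m}\Mc{E}\bigl(\tfrac{f_n+f_m}{2}\bigr)$ to be equal. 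Hence there is a finite $L\ge 0$ with $\lim_n\Mc{E}(f_n)=\lim_{n,m}\Mc{E}\bigl(\tfrac{f_n+f_m}{2}\bigr)=L$.

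Next comes the quantitative step: I would apply the $p$-Clarkson inequalities \eqref{defclk} to $F=\Mc{E}^{1/p}$ with $u=\tfrac{f_n+f_m}{2}$ and $v=\tfrac{f_n-f_m}{2}$, so that $u+v=f_n$ and $u-v=f_m$. If $p\in(1,2]$, then \eqref{eqCLK1} gives
\[
\Bigl(\Mc{E}\bigl(\tfrac{f_n+f_m}{2}\bigr)^{\frac{1}{p-1}}+\Mc{E}\bigl(\tfrac{f_n-f_m}{2}\bigr)^{\frac{1}{p-1}}\Bigr)^{p-1}\le\frac{\Mc{E}(f_n)+\Mc{E}(f_m)}{2},
\]
so, raising to the power $\tfrac{1}{p-1}$, $\Mc{E}\bigl(\tfrac{f_n-f_m}{2}\bigr)^{\frac{1}{p-1}}\le\bigl(\tfrac{\Mc{E}(f_n)+\Mc{E}(f_m)}{2}\bigr)^{\frac{1}{p-1}}-\Mc{E}\bigl(\tfrac{f_n+f_m}{2}\bigr)^{\frac{1}{p-1}}$, whose right-hand side tends to $L^{\frac{1}{p-1}}-L^{\frac{1}{p-1}}=0$. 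If $p\in[2,\infty)$, then \eqref{eqCLK4} gives directly $2\Mc{E}\bigl(\tfrac{f_n-f_m}{2}\bigr)\le\Mc{E}(f_n)+\Mc{E}(f_m)-2\Mc{E}\bigl(\tfrac{f_n+f_m}{2}\bigr)\to 0$. Either way $\Mc{E}\bigl(\tfrac{f_n-f_m}{2}\bigr)\to 0$, hence $\Mc{E}(f_n-f_m)=2^{p}\,\Mc{E}\bigl(\tfrac{f_n-f_m}{2}\bigr)\to 0$ by homogeneity, which finishes the Cauchy argument.

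For the last assertion, I would note that taking $m_1=m_2$ in $\Mc{E}(f_{m_3})\le\Mc{E}\bigl(\tfrac{f_{m_1}+f_{m_2}}{2}\bigr)$ shows $n\mapsto\Mc{E}(f_n)$ is non-increasing, hence convergent with a finite limit $L\ge 0$, while for $n\le m$, letting $m_3\to\infty$ with $(m_1,m_2)=(n,m)$ gives $L\le\Mc{E}\bigl(\tfrac{f_n+f_m}{2}\bigr)$, so $\liminf_{n,m}\Mc{E}\bigl(\tfrac{f_n+f_m}{2}\bigr)\ge L=\limsup_n\Mc{E}(f_n)$ with finiteness clear, which is exactly \eqref{eqconv}. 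I do not expect a genuine obstacle here: the only mildly delicate point is the sandwiching in the second paragraph, since it is precisely what makes the Clarkson error terms vanish rather than merely stay bounded, and one must check that both $\lim_n\Mc{E}(f_n)$ and $\lim_{n,m}\Mc{E}\bigl(\tfrac{f_n+f_m}{2}\bigr)$ exist, not just one of them.
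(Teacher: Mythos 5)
Your proposal is correct and follows essentially the same route as the paper: apply the $p$-Clarkson inequalities to $u=\tfrac{f_n+f_m}{2}$, $v=\tfrac{f_n-f_m}{2}$ so that \eqref{eqconv} forces $\Mc{E}(f_n-f_m)\to0$, conclude Cauchyness in $(\Mc{F},\Mc{E}_1)$ and invoke completeness \eqref{defBanach}, identifying the limit with $f$ via the $L^p$-convergence. Your treatment of the final assertion (monotonicity from $m_1=m_2$, then letting $m_3\to\infty$) is the intended verification, which the paper leaves to the reader.
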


\begin{proof}
	From \eqref{eqconv}, we obtain $\lim_{i,j\to\infty}\Mc{E}(f_i-f_j)=0$ because, by \ref{defclk},
	\[
	\Mc{E}\left(\frac{f_i-f_j}{2}\right)\le
	\begin{cases}
		\left(\left(\frac{\Mc{E}(f_i)+\Mc{E}(f_j)}{2}\right)^{\frac{1}{p-1}}
		-\Mc{E}\left(\frac{f_i+f_j}{2}\right)^{\frac{1}{p-1}}\right)^{p-1} & (1<p<2),\\[0.3em]
		\displaystyle \frac{\Mc{E}(f_i)+\Mc{E}(f_j)}{2}-\Mc{E}\left(\frac{f_i+f_j}{2}\right) & (2\le p).
	\end{cases}
	\]
	Hence $\{f_i\}_{i\ge1}$ is a Cauchy sequence in $(\Mc{F},\|\cdot\|_{\Mc{E}_1})$, and therefore $f_i\to g$ in $(\Mc{F},\|\cdot\|_{\Mc{E}_1})$ for some $g\in\Mc{F}$. By uniqueness of the $L^p$-limit,  we have $g=f$.
	For the last assertion, the condition $\Mc{E}(f_{i_3})\le \Mc{E}\!\left(\frac{f_{i_1}+f_{i_2}}{2}\right)$ clearly yields \eqref{eqconv}.
\end{proof}

\begin{lem}\label{lemfold}
	Let $\varphi\in C(\Mb{R})$, $f\in\core$, and $a_0,\dots,a_N\in\Mb{R}$ satisfy
	$a_0<a_1<\cdots<a_N$ and $a_0\le f\le a_N$.
	If $\varphi(0)=0$ and $\varphi|_{[a_{i-1},a_i]}$ is affine for each $1\le i\le N$, then $\varphi\circ f\in\core$ and
	\begin{equation}\label{eqefold}
		\Mc{E}(\varphi\circ f)=\sum_{i=1}^{N} \LIP(\varphi|_{[a_{i-1},a_i]})^p\,\Mc{E}\bigl(\eta_{(a_{i-1},a_i)}\circ f\bigr).
	\end{equation}
\end{lem}

A useful corollary immediately follows from this lemma.
\begin{cor}\label{corfold}
	Let $\varphi_1,\varphi_2$ be piecewise affine functions with $\varphi_1(0)=\varphi_2(0)=0$ and $|\varphi_1'|\le|\varphi_2'|$ a.e.~(with respect to the Lebesgue measure). Then $\Mc{E}(\varphi_1\circ f)\le\Mc{E}(\varphi_2\circ f)$ for any $f\in\core$.
\end{cor}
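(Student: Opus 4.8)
The plan is to reduce the statement to Lemma~\ref{lemfold} applied to a single partition adapted to both $\varphi_1$ and $\varphi_2$. Fix $f\in\core$. Since $f$ is continuous with compact support it is bounded, so I can pick real numbers $a_0<a_1<\cdots<a_n$ with $a_0\le f\le a_n$, with $0\in\{a_0,\dots,a_n\}$, and with $\{a_0,\dots,a_n\}$ containing every breakpoint of $\varphi_1$ and of $\varphi_2$ lying in $[a_0,a_n]$ (there are only finitely many, as $\varphi_1,\varphi_2$ are piecewise affine). Then for $j=1,2$ the restriction $\varphi_j|_{[a_{i-1},a_i]}$ is affine for each $i$ and $\varphi_j(0)=0$, so Lemma~\ref{lemfold} gives $\varphi_1\circ f,\varphi_2\circ f\in\core$ together with
\[
\Mc{E}(\varphi_j\circ f)=\sum_{i=1}^{n}\LIP\!\left(\varphi_j|_{[a_{i-1},a_i]}\right)^p\,\Mc{E}\!\left(\Mc{C}_{a_{i-1}}^{a_i}\circ f\right),\qquad j=1,2.
\]

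Next I would compare the two sums coefficient by coefficient. On the interior of $[a_{i-1},a_i]$ each $\varphi_j$ has constant derivative, equal to the signed slope $s_i^{(j)}$ of that affine piece, and $\LIP(\varphi_j|_{[a_{i-1},a_i]})=|s_i^{(j)}|$. Since $|\varphi_1'|\le|\varphi_2'|$ holds Lebesgue-a.e., it holds on a subset of positive measure of $(a_{i-1},a_i)$, which for the two constants $|s_i^{(1)}|$ and $|s_i^{(2)}|$ forces $|s_i^{(1)}|\le|s_i^{(2)}|$, hence $\LIP(\varphi_1|_{[a_{i-1},a_i]})^p\le\LIP(\varphi_2|_{[a_{i-1},a_i]})^p$. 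Because $\Mc{E}^{1/p}$ is a seminorm on $\Mc{F}$ we have $\Mc{E}(\Mc{C}_{a_{i-1}}^{a_i}\circ f)\ge0$, so multiplying each of these inequalities by the corresponding nonnegative factor and summing over $i$ yields $\Mc{E}(\varphi_1\circ f)\le\Mc{E}(\varphi_2\circ f)$, as desired.

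I do not expect a genuine obstacle: the argument is essentially bookkeeping once Lemma~\ref{lemfold} is in hand. The only points needing minor care are choosing the common partition so that it simultaneously dominates the range of $f$, contains $0$, and refines the breakpoint sets of both functions, and the (routine) step of upgrading the a.e.\ inequality $|\varphi_1'|\le|\varphi_2'|$ to the pointwise comparison of the piecewise-constant slopes on each subinterval.
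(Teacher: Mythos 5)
Your proof is correct and is exactly the argument the paper has in mind: the paper gives no proof of this corollary beyond stating that it ``immediately follows'' from Lemma \ref{lemfold}, and your common refinement of the breakpoints, the term-by-term comparison of slopes via the a.e.\ hypothesis, and the nonnegativity of $\Mc{E}(\Mc{C}_{a_{i-1}}^{a_i}\circ f)$ supply precisely the omitted bookkeeping.
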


\begin{proof}[Proof of Lemma \ref{lemfold}]
	We may assume $\varphi=\sum_{i=1}^N b_i\eta_{(a_{i-1},a_i)}\ (b_i\in\Mb{R},1\le i\le N)$ without loss of generality. 
	Then $\varphi\circ f\in\core$ by Lemma \ref{lemtrunc}.\par
	We prove \eqref{eqefold} by induction on $N$. The case $N=1$ is immediate. Assume $N>1$.
	We first consider the case $a_1\le 0$. For sufficiently large $n$, define
	\begin{align*}
		&\varphi_n:=b_1\,\eta_{(a_0,\,a_1-2^{-n})}+\sum_{i=2}^{N} b_i\,\eta_{(a_{i-1},a_i)}.
		\shortintertext{Then}
		\Mc{E}(\varphi_n\circ f)
		&=\Mc{E}\bigl(b_1\eta_{(a_0,a_1-2^{-n})}\circ f\bigr)
		+\Mc{E}\left(\sum_{i=2}^{N} b_i\eta_{(a_{i-1},a_i)}\circ f\right)\\
		&=|b_1|^p\,\Mc{E}\bigl(\eta_{(a_0,a_1-2^{-n})}\circ f\bigr)
		+\Mc{E}\bigl((\varphi\circ\eta_{(a_1,a_N)})\circ f\bigr)\\
		&=|b_1|^p\,\Mc{E}\bigl(\eta_{(a_0,a_1-2^{-n})}\circ f\bigr)
		+\sum_{i=2}^{N} |b_i|^p\,\Mc{E}\bigl(\eta_{(a_{i-1},a_i)}\circ f\bigr),
	\end{align*}
	by \ref{defstrloc} and the induction hypothesis.
	Moreover,
	\[
	\varphi\circ f-\varphi_n\circ f
	=b_1\bigl(\eta_{(a_0,a_1)}-\eta_{(a_0,a_1-2^{-n})}\bigr)\circ f
	=b_1\,\eta_{(a_1-2^{-n},a_1)}\circ f\to 0 \quad\text{in }L^p(\Mf{m})
	\]
	by the dominated convergence theorem. In addition, for any $n_1\le n_2\le n_3$, \ref{defMarkov} yields
	\[
	\Mc{E}\bigl(b_1\eta_{(a_1-2^{-n_3},a_1)}\circ f\bigr)
	\le
	\Mc{E}\left(\frac{b_1\eta_{(a_1-2^{-n_1},a_1)}\circ f+b_1\eta_{(a_1-2^{-n_2},a_1)}\circ f}{2}\right).
	\]
	Hence $\Mc{E}(\varphi\circ f)=\lim_{n\to\infty}\Mc{E}(\varphi_n\circ f)$ and
	$\lim_{n\to\infty}\Mc{E}\bigl(\eta_{(a_0,a_1-2^{-n})}\circ f\bigr)=\Mc{E}\bigl(\eta_{(a_0,a_1)}\circ f\bigr)$
	by Lemma \ref{lemconvconv}. Therefore \eqref{eqefold} holds in this case.\par
	For the case $a_1>0$, let $\phi(t):=\varphi(-t).$ Then $-a_{N-1}< 0,$ and 
	\begin{multline*}
		\Mc{E}(\varphi\circ f)=\Mc{E}(\phi\circ(-f))\\=\sum_{i=1}^N|-b_i|^p\Mc{E}(\eta_{(-a_i,-a_{i-1})}\circ (-f))=\sum_{i=1}^N|b_i|^p\Mc{E}(\eta_{(a_{i-1},a_i)}\circ f)
	\end{multline*}
	follows from the previous case. 
\end{proof}

\begin{prop}\label{propnMarkov}
	Let $\varphi\in C(\Mb{R})$ be a normal contraction; that is, $\varphi(0)=0$ and $\LIP(\varphi)\le1$. Then $\varphi\circ f\in \Mc{F}$ and $\Mc{E}(\varphi\circ f)\le\Mc{E}(f)$ for any $f\in\Mc{F}$.
\end{prop}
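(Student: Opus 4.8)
The plan is to prove the statement first for $f\in\core$ and then pass to general $f\in\Mc{F}$ via the regularity assumption \eqref{defreg}. In both passages the crucial point is that $\Mc{E}$ is not a priori (sequentially) continuous, so limits must be taken through \emph{weak} convergence in the reflexive Banach space $(\Mc{F},\Mc{E}_1)$, using that the continuous seminorm $\Mc{E}^{1/p}$ is weakly lower semicontinuous.

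First I would dispose of the case in which $\varphi$ is, in addition, piecewise affine: then Corollary \ref{corfold} applied with $\varphi_1=\varphi$ and $\varphi_2=\id$ (both have value $0$ at $0$, and $|\varphi'|\le 1=|\id'|$ a.e.) immediately gives $\varphi\circ f\in\core$ and $\Mc{E}(\varphi\circ f)\le\Mc{E}(\id\circ f)=\Mc{E}(f)$ for every $f\in\core$. For a general normal contraction $\varphi$ and $f\in\core$, I would approximate $\varphi$ uniformly on $[-\|f\|_\infty,\|f\|_\infty]$ by piecewise affine normal contractions $\varphi_k$ with $\varphi_k(0)=0$ (for instance the piecewise linear interpolants of $\varphi$ on the grids $k^{-1}\Mb{Z}$, whose slopes are bounded by $\LIP(\varphi)\le 1$). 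Then $\{\varphi_k\circ f\}_k$ is bounded in $(\Mc{F},\Mc{E}_1)$ by the previous paragraph, while $\varphi_k\circ f\to\varphi\circ f$ uniformly with supports contained in the fixed compact set $\supp[f]$, hence $\varphi_k\circ f\to\varphi\circ f$ in $L^p(\Mf{m})$. By reflexivity of $(\Mc{F},\Mc{E}_1)$ I would extract a subsequence converging weakly in $(\Mc{F},\Mc{E}_1)$ along which $\liminf_k\Mc{E}(\varphi_k\circ f)$ is realized; since the continuous embedding $(\Mc{F},\Mc{E}_1)\hookrightarrow L^p(\Mf{m})$ is weak-to-weak continuous, the weak limit coincides with $\varphi\circ f$, so $\varphi\circ f\in\Mc{F}$, and weak lower semicontinuity of $\Mc{E}^{1/p}$ gives $\Mc{E}(\varphi\circ f)\le\liminf_k\Mc{E}(\varphi_k\circ f)\le\Mc{E}(f)$.

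For general $f\in\Mc{F}$, I would pick $f_k\in\core$ with $f_k\to f$ in $(\Mc{F},\Mc{E}_1)$ by \eqref{defreg}. As $\varphi$ is $1$-Lipschitz, $\|\varphi\circ f_k-\varphi\circ f\|_{L^p(\Mf{m})}\le\|f_k-f\|_{L^p(\Mf{m})}\to 0$, while the previous step gives $\varphi\circ f_k\in\Mc{F}$ with $\Mc{E}(\varphi\circ f_k)\le\Mc{E}(f_k)$, a bounded sequence (since $\Mc{E}(f_k)\to\Mc{E}(f)$). The same weak-compactness argument then yields $\varphi\circ f\in\Mc{F}$ and $\Mc{E}(\varphi\circ f)\le\liminf_k\Mc{E}(\varphi\circ f_k)\le\liminf_k\Mc{E}(f_k)=\Mc{E}(f)$.

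I expect the only genuine obstacle to be exactly this lack of continuity of $\Mc{E}$ along the approximating sequences; it is precisely the reflexivity of $(\Mc{F},\Mc{E}_1)$—a consequence of the $p$-Clarkson inequalities \eqref{defclk}—that is used to circumvent it. Everything else (uniform approximation of a $1$-Lipschitz function by piecewise affine $1$-Lipschitz functions, and finiteness of $\Mf{m}$ on compact sets) is routine.
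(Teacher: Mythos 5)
Your proof is correct and follows essentially the same route as the paper: reduce to piecewise affine contractions via Corollary \ref{corfold}, then pass to the limit using $L^p$-convergence, the uniform energy bound, and the reflexivity-based weak lower semicontinuity of $\Mc{E}^{1/p}$ (which the paper invokes as a cited closedness result). The only difference is organizational — you take the two approximations (in $\varphi$ and in $f$) successively rather than along a single diagonal sequence, which lets you replace the paper's a.e.-convergence-plus-domination argument by uniform convergence on a compact support; the substance is the same.
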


\begin{proof}
	Let $\varphi_i:\Mb{R}\to\Mb{R}$ be such that $\varphi_i|_{[k/2^i,(k+1)/2^i]}$ is affine for every $k\in\Mb{Z}$ and $\varphi_i=\varphi$ on $2^{-i}\Mb{Z}$. Then $\varphi_i\to \varphi$ pointwise on $\Mb{R}$ and $\LIP(\varphi_i)\le1$.
	Let $f_j\in\core$ satisfy $\|f_j-f\|_{\Mc{E}_1}\to 0$ as $i\to\infty$.
	Choose a subsequence $\{f_{j_k}\}_{k\ge 1}$ such that $\|f_{j_k}-f\|_{L^p(\Mf{m})}\le 2^{-k}$.
	Then
	$
	\left\|\limsup_{k\to\infty}|f_{j_k}-f|\right\|_{L^p(\Mf{m})}
	\le \lim_{k\to\infty}\|\sum_{l\ge k} |f_{j_l}-f|\|_{L^p(\Mf{m})}
	\le \lim_{k\to\infty}2^{-k+1}=0,
	$
	and hence $f_{j_k}\to f$ $\Mf{m}$-a.e. By the dominated convergence theorem, $\varphi_k\circ f_{j_k}\to \varphi\circ f$ in $L^p(\Mf{m})$, because $\varphi_k\circ f_{j_k}\to \varphi\circ f$ and
	$
	|\varphi_k\circ f_{j_k}|
	\le \LIP(\varphi_k)\,|f_{j_k}|
	\le |f|+\sum_{l\ge1}|f-f_{j_l}|
	$
	$\Mf{m}$-a.e. 
	Therefore, $\varphi\circ f\in\Mc{F}$ and
	\[
	\Mc{E}(\varphi\circ f)\le \liminf_{k\to\infty}\Mc{E}(\varphi_{k}\circ f_{j_k})
	\le \lim_{k\to\infty}\Mc{E}(f_{j_k})=\Mc{E}(f),
	\]
	by \cite[Proposition 3.18(a)]{KSdiff} and Lemma \ref{lemfold}.
\end{proof}

\begin{rem}\label{remWMarkov}
	Proposition \ref{propnMarkov} corresponds to one characterization of the Markov property of Dirichlet forms (see, e.g., \cite[Section 1.1]{FOT}). We also note that the following condition \ref{condWMarkov} corresponds to another characterization of the Markov property.
	\setcounter{energydashcond}{1} % to match numbering with Assumption list
	\begin{description}
		\Edashcond{condWMarkov} For any $f\in\core$ and $\epsilon>0$, there exists a non-decreasing function $\varphi:\Mb{R}\to\Mb{R}$ such that $\varphi(t)=t$ on $[0,1]$, $-\epsilon\le \varphi\le 1+\epsilon$, $|\varphi(a)-\varphi(b)|\le|a-b|$ for any $a,b\in\Mb{R}$, $\varphi\circ f\in\core$, and $\Mc{E}(\varphi\circ f)\le\Mc{E}(f)$.
	\end{description}
	Under Assumptions \ref{asmspace} and \ref{asmform} except for \ref{defMarkov}, \ref{condWMarkov} is equivalent to \ref{defMarkov} by \cite[Proposition 3.18(a)]{KSdiff}.
\end{rem}

\begin{cor}\label{corincl}
	\begin{enumerate}
		\item If $f\in\Mc{F}$, then $\eta_I\circ f$, $|f-a|-|a|\in\Mc{F}$ and $\Mc{E}(\eta_I\circ f)\le\Mc{E}(f)$, $\Mc{E}(|f-a|-|a|)\le\Mc{E}(f)$ for any open interval $I$ and $a\in\Mb{R}$.
		\item If $f,g\in\Mc{F}$ {\rm(}resp.\ $\core${\rm)} and $a\ge0$, then $f\vee (g-a),\ f\wedge (g+a)\in\Mc{F}$ {\rm(}resp.\ $\core${\rm)} and
		\[
		\Mc{E}(f\vee (g-a))\vee\Mc{E}(f\wedge (g+a))
		\le\left(\Mc{E}(f)^{1/p}+\Mc{E}(g)^{1/p}\right)^p.
		\]
		\item If $f\in\core$ and $\varphi$ is locally Lipschitz with $\varphi(0)=0$, then $\varphi\circ f\in \core$ and $\Mc{E}(\varphi\circ f)\le\LIP(\varphi|_{[-\|f\|_{\infty},\|f\|_{\infty}]})^p\Mc{E}(f)$.
		\item If $f,g\in\core$, then $fg\in\core$.
	\end{enumerate}
\end{cor}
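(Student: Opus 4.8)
The plan is to deduce all four parts from the normal-contraction inequality of Proposition~\ref{propnMarkov}, using only that $\Mc{E}^{1/p}$ is a $p$-homogeneous, subadditive seminorm on $\Mc{F}$ and that $\core$ is a linear subspace of $C_c(X)$. Part~(1) is then immediate: both $\Mc{C}_a^b$ (a composition of the $1$-Lipschitz maps $t\mapsto t\wedge b$ and $t\mapsto t\vee a$, normalized so that $\Mc{C}_a^b(0)=0$ as recorded in Definition~\ref{deficut}) and $t\mapsto|t|$ are normal contractions, so Proposition~\ref{propnMarkov} applies verbatim.

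For part~(2) the crux is the algebraic identity, valid whenever $a\ge0$,
\[f\vee(g-a)=\tfrac12(f+g)+\tfrac12\,\Psi(f-g),\qquad\Psi(t):=|t+a|-a,\]
which follows from $u\vee v=\tfrac12\bigl(u+v+|u-v|\bigr)$ with $u=f$, $v=g-a$ once the constants $\pm a/2$ cancel; symmetrically, $f\wedge(g+a)=\tfrac12(f+g)-\tfrac12\,\widetilde\Psi(f-g)$ with $\widetilde\Psi(t):=|t-a|-a$. Since $a\ge0$, both $\Psi$ and $\widetilde\Psi$ are normal contractions, so $\Psi(f-g),\widetilde\Psi(f-g)\in\Mc{F}$ by Proposition~\ref{propnMarkov}, and they lie in $\core$ when $f,g\in\core$ (they are continuous and vanish wherever $f-g$ does, hence have support in $\supp[f]\cup\supp[g]$); thus $f\vee(g-a),f\wedge(g+a)\in\Mc{F}$ (resp.\ $\core$). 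The energy bound then follows from subadditivity and $p$-homogeneity of $\Mc{E}^{1/p}$, Proposition~\ref{propnMarkov} to discard $\Psi$, and $\Mc{E}(-g)=\Mc{E}(g)$:
\[\Mc{E}\bigl(f\vee(g-a)\bigr)^{1/p}\le\tfrac12\Mc{E}(f+g)^{1/p}+\tfrac12\Mc{E}(f-g)^{1/p}\le\Mc{E}(f)^{1/p}+\Mc{E}(g)^{1/p},\]
and likewise for $f\wedge(g+a)$, so taking the maximum gives the claim. I expect this to be the one genuinely delicate step: the naive splitting $f\vee(g-a)=f+(g-f-a)^+$ only yields the weaker constant $2\Mc{E}(f)^{1/p}+\Mc{E}(g)^{1/p}$, so the symmetric decomposition above — whose sole purpose is to absorb the constant $a$, which need not belong to $\Mc{F}$, into a normal contraction of $f-g$ — is essential.

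For part~(3), since $f\in\core$ is bounded, set $L:=\LIP\bigl(\varphi|_{[-\|f\|_{\infty},\|f\|_{\infty}]}\bigr)<\infty$ and replace $\varphi$ by $\widetilde\varphi(t):=\varphi\bigl((t\vee(-\|f\|_{\infty}))\wedge\|f\|_{\infty}\bigr)$, which agrees with $\varphi$ on the range of $f$, vanishes at $0$, and has $\LIP(\widetilde\varphi)=L$; if $L=0$ there is nothing to prove, while if $L>0$ then $\widetilde\varphi/L$ is a normal contraction, so Proposition~\ref{propnMarkov} and $p$-homogeneity give $\varphi\circ f\in\Mc{F}$ with $\Mc{E}(\varphi\circ f)\le L^p\Mc{E}(f)$, and continuity together with $\varphi(0)=0$ then places $\varphi\circ f$ in $C_c(X)\cap\Mc{F}=\core$. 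Finally, part~(4) is polarization: $fg=\tfrac14\bigl((f+g)^2-(f-g)^2\bigr)$, and $(f\pm g)^2\in\core$ by part~(3) applied to the locally Lipschitz function $t\mapsto t^2$ (which vanishes at $0$), so $fg\in\core$ because $\core$ is a linear space.
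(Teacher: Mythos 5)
Your proof is correct and follows essentially the same route as the paper: part (1) and (3) are direct applications of Proposition~\ref{propnMarkov} after truncating $\varphi$ to $[-\|f\|_\infty,\|f\|_\infty]$, part (2) uses the identical decomposition $f\vee(g-a)=\frac{f+g}{2}+\bigl(\frac{|(f-g)+a|}{2}-\frac{a}{2}\bigr)$ with the same triangle-inequality estimate, and part (4) is the same polarization idea (the paper writes $fg=\frac{(f+g)^2-f^2-g^2}{2}$, which is equivalent to your version). The only cosmetic differences are that the paper treats $f\wedge(g+a)$ as $-((-f)\vee(-g-a))$ rather than by a symmetric identity, and it leaves the compact-support bookkeeping implicit where you spell it out.
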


\begin{proof}
	\begin{enumerate}
		\item This is immediate from Proposition \ref{propnMarkov}.
		\item We have
		\[\textstyle 
		f\vee (g-a)=\frac{f+(g-a)}{2}+\frac{|f-(g-a)|}{2}
		=\frac{f+g}{2}+\left(\frac{|(f-g)+a|}{2}-\frac{a}{2}\right)\in\Mc{F},
		\]
		and
		\begin{align*}
			\textstyle
			\Mc{E}\left(\frac{f+g}{2}+\left(\frac{|(f-g)+ a|}{2}-\frac{a}{2}\right)\right)^{\frac{1}{p}}\le& \textstyle \Mc{E}\left(\frac{f+g}{2}\right)^{\frac{1}{p}}+\Mc{E}\left(\frac{|(f-g)+a|}{2}-\frac{a}{2} \right)^{\frac{1}{p}}\\ \le&\textstyle\Mc{E}\left(\frac{f+g}{2} \right)^{\frac{1}{p}}+\Mc{E}\left(\frac{f-g}{2} \right)^{\frac{1}{p}}\\  
			\le&\Mc{E}(f)^{\frac{1}{p}}+\Mc{E}(g)^{\frac{1}{p}},
		\end{align*}
		by \ref{defBanach} and part (1). The claim for $f\wedge (g+a)=-((-f)\vee (-g-a))$ follows similarly.
		\item Since $\varphi\circ f=(\varphi\circ \eta_{(-\|f\|_{\infty},\,\|f\|_{\infty})})\circ f$, the claim follows from Proposition \ref{propnMarkov}.
		\item $fg=\frac{(f+g)^2-f^2-g^2}{2}\in\core$ by (3).
	\end{enumerate}
\end{proof}

\begin{lem}\label{lemV}
	Let $\Mc{V}$ be the linear span of $(\core)\cup\{\indi_{X}\}$ in $C(X)$. If $f,g\in \Mc{V}$ and $\varphi\in C(\Mb{R})$ is locally Lipschitz, then $\varphi\circ f,f\vee g, f\wedge g$ and $fg$ belong to $\Mc{V}$. Furthermore, $\Mc{V}\cap C_c(X)=\core$.
\end{lem}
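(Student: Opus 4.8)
The plan is to verify the claimed closure properties one operation at a time, reducing each to the already-established facts about $\core$ in Corollary \ref{corincl} together with the trivial observation that the constant function $\indi_X$ interacts well with all the operations in question. First I would fix the notation: every $f\in V$ can be written as $f=\tilde f+c\,\indi_X$ with $\tilde f\in\core$ and $c\in\Mb{R}$, and this decomposition is unique because $\core\subset C_c(X)$ while $\indi_X\notin C_c(X)$ (as $X$ is not compact here — or, if $X$ is compact, $V=C(X)$ and the statement is either vacuous or follows directly; in any case one may argue on $\supp$). The key point throughout is that adding a constant to an element of $\core$ does not leave $\core$, so $V$ is genuinely larger, but the nonlinear operations $fg$, $|f|$, $\vee$, $\wedge$, $\varphi\circ(\cdot)$ applied to elements of $V$ land back in $V$ after subtracting an appropriate constant.

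Concretely, for $f,g\in V$ write $f=\tilde f+c\indi_X$, $g=\tilde g+d\indi_X$. For the product, expand $fg=\tilde f\tilde g+c\tilde g+d\tilde f+cd\,\indi_X$; here $\tilde f\tilde g\in\core$ by Corollary \ref{corincl}(4) and $c\tilde g,d\tilde f\in\core$ since $\core$ is a linear space, so $fg\in V$. For the composition $\varphi\circ f$ with $\varphi$ locally Lipschitz, set $\psi(t):=\varphi(t+c)-\varphi(c)$, which is again locally Lipschitz with $\psi(0)=0$; then $\psi\circ\tilde f\in\core$ by Corollary \ref{corincl}(3), and $\varphi\circ f=\psi\circ\tilde f+\varphi(c)\indi_X\in V$. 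The cases $|f|$, $f\vee g$, $f\wedge g$ all reduce to the composition case: $|f|$ is $\varphi\circ f$ for $\varphi=|\cdot|$; and $f\vee g=\tfrac{f+g}{2}+\tfrac{|f-g|}{2}$ with $f-g=(\tilde f-\tilde g)+(c-d)\indi_X\in V$, so $|f-g|\in V$ by the previous step and hence $f\vee g\in V$ (and $f\wedge g=-((-f)\vee(-g))\in V$ likewise, using that $V$ is a vector space).

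For the final assertion $V\cap C_c(X)=\core$, the inclusion $\supseteq$ is immediate from $\core\subset C_c(X)$ and $\core\subset V$. For $\subseteq$, take $h\in V\cap C_c(X)$ and write $h=\tilde h+c\indi_X$ with $\tilde h\in\core$; then $c\indi_X=h-\tilde h\in C_c(X)$, which forces $c=0$ unless $X$ is compact, and therefore $h=\tilde h\in\core$. (If $X$ happens to be compact, then $C_c(X)=C(X)$ and $\core=\Mc{F}$; the regularity assumption \eqref{defreg} still gives that $\indi_X$ can be approximated, but in that situation $V=\Mc{F}=\core$ already — actually one should simply note $\indi_X\in\core$ when $X$ is compact, so $V=\core$ and there is nothing to prove. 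I would phrase the argument so that both cases are handled uniformly, e.g. by observing that any $h\in V\cap C_c(X)$ with compact support differs from its $\core$-part by a constant multiple of $\indi_X$ which itself must have compact support.) There is no real obstacle here; the only point requiring a moment's care is the compact/non-compact dichotomy in the last sentence, and this is purely bookkeeping rather than a genuine difficulty.
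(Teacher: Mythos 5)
Your argument is correct and follows essentially the same route as the paper's proof: decompose $f=\tilde f+c\indi_X$ with $\tilde f\in\core$, reduce each operation to Corollary \ref{corincl}, and settle $V\cap C_c(X)=\core$ by the compact/non-compact dichotomy. The only substantive deviation is that you obtain $|f|$ as a special case of the composition step (taking $\varphi=|\cdot|$, which is locally Lipschitz), whereas the paper derives it from Corollary \ref{corincl}(2) via the identity $|u+a|=(u\vee(-u-2a))+a$ for $a\ge0$; your shortcut is legitimate and, if anything, a little cleaner, since $\vee$ and $\wedge$ then follow from $f\vee g=\frac{f+g}{2}+\frac{|f-g|}{2}$ exactly as in the paper.

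The one place where your write-up has a real gap is the compact case of the last assertion. You say one should ``simply note $\indi_X\in\core$ when $X$ is compact,'' but this is not automatic: $\core=\Mc{F}\cap C_c(X)$, so you must show $\indi_X\in\Mc{F}$, and sup-norm approximation from \eqref{defreg} alone does not place a function in $\Mc{F}$. The missing step (which is how the paper argues) is: pick $v\in\core$ with $\|2\indi_X-v\|_\infty<1$, so that $1<v<3$ everywhere, and then $\indi_X=v\wedge 1=\Mc{C}_{-\infty}^1\circ v\in\core$ by the stability of $\core$ under the cut-offs $\Mc{C}_a^b$ (Definition \ref{deficut}, via Lemma \ref{lemuMarkov}). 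With that line supplied your proof is complete. Two incidental corrections to your parentheticals, neither of which your main argument uses: when $X$ is compact one has $\core=\Mc{F}\cap C(X)$, which need not equal $\Mc{F}$ (elements of $\Mc{F}$ are a priori only in $L^p(\Mf{m})$), and $V$ is only dense in $C(X)$, not equal to it.
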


\begin{proof}
	Throughout this proof, we write $f=u+a$ with $u\in\core$ and $a\in\Mb{R}$.
	For $\varphi$, let $\psi(t)=\varphi(t+a)-\varphi(a)$. Then $\psi$ is locally Lipschitz and $\psi(0)=0$.
	Hence $\psi\circ u\in\core$ by Corollary \ref{corincl}~(3), and thus
	$\varphi\circ f=\psi\circ u+\varphi(a)\in \Mc{V}$.\par
	Similarly to the proof of Corollary \ref{corincl}~(2), the inclusions $f\vee g, f\wedge g\in \Mc{V}$ follow from $|f-g|\in \Mc{V}$. The inclusion $fg\in \Mc{V}$ follows from Corollary \ref{corincl}~(4).\par
	We next prove that $\Mc{V}\cap C_c(X)=\core$.
	If $X$ is compact, then $\|2\indi_X-v\|_{\infty}<1$ for some $v\in\core$ by \ref{defreg}.
	Hence $\indi_X=\eta_{(-\infty,1)}v\in\core$, and therefore $\core=\Mc{V}=\Mc{V}\cap C_c(X)$.
	If $X$ is not compact, $u+a\in C_c(X)$ forces $a=0$ because $\{u=0\}$ is not compact. Thus $\Mc{V}\cap C_c(X)\subset \core$.
\end{proof}

\begin{cor}\label{corV}
	Let $f\in \Mc{V}$. If there exist $n\ge 1$, a function $F:\Mb{R}^n\to\Mb{R}$, and $\{f_i\}_{i=1}^n\in(\core)^n$ such that $f(x)=F(f_1(x),\dots,f_n(x))$, then $f-F(\zero)\in\core$.
\end{cor}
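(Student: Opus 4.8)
The plan is to exploit the defining structure of $V$: since $f\in V$ we may write $f=u+a$ with $u\in\core$ and $a\in\Mb{R}$, and the whole statement then reduces to identifying the scalar $a$ with $F(\zero)$ in the non-compact case. I would first dispose of the compact case: if $X$ is compact, Lemma \ref{lemV} already gives $\indi_X\in\core$, so $V=\core$ and $f-F(\zero)\in\core$ holds automatically (and no uniqueness of $F(\zero)$ is claimed in that case, consistently with the statement).

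For non-compact $X$, the key point is that $\core\subset C_c(X)$, so each of $u,f_1,\dots,f_n$ has compact support; their union $K$ is then compact, and non-compactness of $X$ guarantees $X\setminus K\neq\emptyset$. Evaluating the identity $f=F(f_1,\dots,f_n)$ at a point $x_0\in X\setminus K$ — where $u$ and every $f_i$ vanish — yields $a=u(x_0)+a=f(x_0)=F(\zero)$, hence $f-F(\zero)=u\in\core$. For the ``in particular'' clause I would note that for non-compact $X$ we have $\indi_X\notin\core$ (its support is all of $X$), so the representation $f=u+a$ is unique; since the argument above forces $F(\zero)$ to equal this intrinsic $a$, it is determined by $f$ alone.

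The only place any genuine verification is needed is the non-emptiness of $X\setminus K$, which is exactly the role of the non-compactness hypothesis; everything else is bookkeeping with supports and with the linear decomposition, requiring no estimates. I therefore do not anticipate a real obstacle here — the proof is essentially a pointwise evaluation argument away from a compact set.
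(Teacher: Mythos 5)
Your proof is correct and rests on the same key observation as the paper's: $f$ equals $F(\zero)$ wherever all the $f_i$ vanish. The paper packages this slightly differently --- it shows $\supp[f-F(\zero)]\subset\cup_{i=1}^n\supp[f_i]$ is compact and invokes $V\cap C_c(X)=\core$ from Lemma \ref{lemV}, which treats the compact and non-compact cases uniformly --- but your pointwise evaluation at a point outside the supports, combined with the decomposition $f=u+a$, is an equally valid route to the same conclusion.
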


\begin{proof}
	Since $f|_{\cap_{i=1}^n \{f_i=0\}}\equiv F(\zero)$, we have
	$\{f\ne F(\zero)\}\subset\cup_{i=1}^n\{f_i\ne 0\}$.
	Hence $\supp(f-F(\zero))\subset \cup_{i=1}^n \supp(f_i)$ is compact, and thus $f-F(\zero)\in\core$ by Lemma \ref{lemV}.
\end{proof}

Hereafter, for $f\in\Mc{V}$ we define $\Mc{E}^\Mc{V}(f):=\Mc{E}(g)$, where $g\in\core$ and $f-g$ is constant: this is well-defined because if $\indi_{X}\in\core$ then $\Mc{E}(\indi_{X})=0$ by \ref{defstrloc} for $f=g=\indi_{X}$. \footnote{One would like to use $\Mc{E}$ instead of $\Mc{E}^\Mc{V}$ if we knew $\Mc{E}(1)=0$ whenever $\indi_X\in\Mc{F}$. However, we do not even know whether \ref{defstrloc} is a sufficient form of strong locality to prove it or not.}
We write
$f_n^{g+b,a}:=(T_n\circ f)\wedge (S_n^a\circ (g+b))=f_n^{g,a-b}\in\core
$
for $f,g\in\core$, $n\in\Mb{N}$, and $a,b\in\Mb{R}$.

\section{\texorpdfstring{Proof of Theorem \ref{thmunique} (Part II): construction of $p$-energy measures for $f\in\core$}{Construction of p-energy measures for core functions}}\label{secconst}
In this section, for each $f\in\core$ we construct $\mu_{\langle f \rangle}$ with $\mu_{\langle f\rangle}(X)=\Mc E(f)$. Note that we still assume Assumption \ref{asmform}.

\begin{lem}\label{lemfnaconv}
	For any $f\in\core$, $g\in\Mc{V}$, and $a\in\Mb{R}$, we have
	\[
	\lim_{n\to\infty}\Mc{E}^\Mc{V}(S_n^a\circ g)=0
	\quad\text{and}\quad
	\lim_{n\to\infty}\Mc{E}(f_n^{g,a})=\inf_{n\ge 1}\Mc{E}(f_n^{g,a}).
	\]
\end{lem}

\begin{proof}
	For $n,m\in\Mb{N}$ with $m>n$, set
	\[
	f_{n,m}(x)=\left(\left(\min_{k\in\Mb{Z}} \bigl|f(x)-(2k+1)2^{-n}\bigr|\right)\wedge 2^{-m}\right)-2^{-m}.
	\]
	Then $f_{n,m}\in\core$, $\lim_{m\to\infty}f_{n,m}(x)=0$, and $|f_{n,m}(x)|\le |f(x)|$ for any $n,m$ and $x\in X$.
	Moreover, by Corollary \ref{corfold},
	\[
	\Mc{E}(f_{n,m_3})\le\Mc{E}\left(\frac{f_{n,m_1}+f_{n,m_2}}{2}\right)
	\quad\text{for any } m_3\ge m_2\ge m_1>n.
	\]
	Hence, for each fixed $n$, Lemma \ref{lemconvconv} (together with the dominated convergence theorem) yields
	$\lim_{m\to\infty}\Mc{E}(f_{n,m})=0$. We obtain 
	\[
	\lim_{n\to\infty}\Mc{E}^\Mc{V}\left(\deco{S_n^a\circ g}\right)=\lim_{n\to\infty}\Mc{E}(-\eta_{(-a-2^{-n},-a)}\circ(-g))=0
	\] in the same way.\par
	Since
	$
	f_m^{g,a}=(T_m\circ f_n^{g,a})\wedge (S_m^a\circ g)\wedge(f_{n,m}+2^{-m})$,
	we have
	\begin{align*}
		0\le\Mc{E}(f_m^{g,a})
		&\le \Bigl(\Mc{E}(T_m\circ f_n^{g,a})^{\frac{1}{p}}
		+\Mc{E}^\Mc{V}\bigl(\deco{S_m^a\circ g}\bigr)^{\frac{1}{p}}
		+\Mc{E}(f_{n,m})^{\frac{1}{p}}\Bigr)^p\\
		&= \Bigl(\Mc{E}(f_n^{g,a})^{\frac{1}{p}}
		+\Mc{E}^\Mc{V}\bigl(\deco{S_m^a\circ g}\bigr)^{\frac{1}{p}}
		+\Mc{E}(f_{n,m})^{\frac{1}{p}}\Bigr)^p,
	\end{align*}
	by Lemma \ref{lemfold} and Corollary \ref{corincl}~(2).
	Taking $\limsup_{m\to\infty}$ and using the above limits, we obtain
	$
	\limsup_{m\to\infty}\Mc{E}(f_{m}^{g,a})\le \Mc{E}(f_n^{g,a})
	$, which implies
	$\lim_{m\to\infty}\Mc{E}(f_{m}^{g,a})=\inf_{n\ge1}\Mc{E}(f_n^{g,a})$.
\end{proof}

\begin{lem}\label{lemfnadist}
	Let $F^g_f(a)$ denote $\lim_{n\to\infty}\Mc{E}(f_n^{g,a})$ for $f\in\core$, $g\in\Mc{V}$, and $a\in\Mb{R}$. Then $F^g_f(\cdot)$ is the distribution function of a finite measure, and $\lim_{a\to\infty}F^g_f(a)=\Mc{E}(f)$.
\end{lem}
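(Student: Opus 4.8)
Write $F:=F^g_f$. I will check that $F$ is finite and non-negative, non-decreasing, right-continuous, with $\lim_{a\to-\infty}F(a)=0$ and $\lim_{a\to+\infty}F(a)=\Mc{E}(f)<\infty$; this is exactly what it means for $F$ to be the distribution function of a finite Borel measure on $\Mb{R}$. Non-negativity and finiteness of each $F(a)$ are immediate from Lemma~\ref{lemfnaconv}. If $a\ge\|g\|_\infty$ then $S_n^a\circ g\equiv 2^{-n}\ge T_n\circ f$, so $f_n^{g,a}=T_n\circ f$; since $T_n$ and $\mathrm{id}$ are both affine of slope $\pm1$ on each $[2^{-n}k,2^{-n}(k+1)]$ and vanish at $0$, applying Lemma~\ref{lemfold} to both over the partition of $[-\|f\|_\infty,\|f\|_\infty]$ by points of $2^{-n}\Mb{Z}$ gives $\Mc{E}(T_n\circ f)=\sum_i\Mc{E}(\Mc{C}_{a_{i-1}}^{a_i}\circ f)=\Mc{E}(\mathrm{id}\circ f)=\Mc{E}(f)$, whence $F(a)=\Mc{E}(f)$ there; this yields finiteness and the limit at $+\infty$. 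Symmetrically, for $a<-\|g\|_\infty$ and $n$ large, $S_n^a\circ g\equiv 0$, so $f_n^{g,a}\equiv0$ and $F(a)=0$, giving the limit at $-\infty$. For monotonicity, if $a\le a'$ then $S_n^a\le S_n^{a'}$, so $f_n^{g,a}=f_n^{g,a'}\wedge S_n^a\circ g=f_n^{g,a'}\wedge(\deco{S_n^a\circ g}+c_n)$ with $c_n:=((a+2^{-n})\wedge 2^{-n})^+\ge0$; Corollary~\ref{corincl}(2) gives $\Mc{E}(f_n^{g,a})^{1/p}\le\Mc{E}(f_n^{g,a'})^{1/p}+\Mc{E}(\deco{S_n^a\circ g})^{1/p}$, and letting $n\to\infty$ with $\Mc{E}(\deco{S_n^a\circ g})\to0$ (from the proof of Lemma~\ref{lemfnaconv}) yields $F(a)\le F(a')$.

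The main point is right-continuity. Fix $a$ and $0<\delta<2^{-n}$. The elementary identity $u=(u\wedge c)+(u-c)^+$ with $u=f_n^{g,a+\delta}$, $c=S_n^a\circ g$, together with $f_n^{g,a+\delta}\wedge S_n^a\circ g=f_n^{g,a}$ (valid since $S_n^a\le S_n^{a+\delta}$), gives $f_n^{g,a+\delta}=f_n^{g,a}+r_{n,\delta}$ with $r_{n,\delta}:=(f_n^{g,a+\delta}-S_n^a\circ g)^+$; a short case analysis over $\{g\le a\}$, $\{a<g\le a+2^{-n}\}$, $\{a+2^{-n}<g<a+\delta+2^{-n}\}$ and $\{g\ge a+\delta+2^{-n}\}$ shows
\[r_{n,\delta}=(\Mc{C}_0^\delta\circ q_n)\wedge(S_n^{a+\delta}\circ g),\qquad q_n:=T_n\circ f-f_n^{g,a}=(T_n\circ f-S_n^a\circ g)^+\in\core .\]
Hence $0\le r_{n,\delta}\le\delta$, $\supp[r_{n,\delta}]\subset\supp[f]$ (so $r_{n,\delta}\in\core$), and $r_{n,\delta}\downarrow 0$ pointwise as $\delta\downarrow0$, so $r_{n,\delta}\to 0$ in $L^p(\Mf{m})$. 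Since $|\Mc{E}(f_n^{g,a+\delta})^{1/p}-\Mc{E}(f_n^{g,a})^{1/p}|\le\Mc{E}(r_{n,\delta})^{1/p}$, once $\Mc{E}(r_{n,\delta})\to0$ as $\delta\downarrow0$ for each fixed $n$, right-continuity follows: given $\epsilon>0$ pick $N$ with $\Mc{E}(f_N^{g,a})<F(a)+\epsilon/2^p$ and then $\delta$ small so that $\Mc{E}(f_N^{g,a+\delta})<F(a)+\epsilon$; as $F(a+\delta)=\inf_m\Mc{E}(f_m^{g,a+\delta})\le\Mc{E}(f_N^{g,a+\delta})$, monotonicity forces $F(a^+)=F(a)$.

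To prove $\Mc{E}(r_{n,\delta})\to0$ I would first record the auxiliary fact that $\Mc{E}(\Mc{C}_b^{b+\delta}\circ h)\downarrow0$ as $\delta\downarrow0$ for every $h\in\core$ and $b\in\Mb{R}$: monotonicity in $\delta$ is Corollary~\ref{corfold} (the derivatives $\indi_{(b,b+\delta)}$ are nested), $\Mc{C}_b^{b+\delta}\circ h\to0$ in $L^p(\Mf{m})$ by dominated convergence, and the hypothesis \eqref{eqconv} of Lemma~\ref{lemconvconv} holds since $\indi_{(b,b+\delta_3)}\le\tfrac12(\indi_{(b,b+\delta_1)}+\indi_{(b,b+\delta_2)})$ a.e.\ when $\delta_3\le\delta_2\le\delta_1$ — the same mechanism as $\Mc{E}(\deco{S_n^a\circ g})\to0$ and $\Mc{E}(f_{n,m})\to0$ in Lemma~\ref{lemfnaconv}. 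This gives $\Mc{E}(\Mc{C}_0^\delta\circ q_n)\to0$, and, using $S_n^{a+\delta}-S_n^a=\Mc{C}_a^{a+\delta}-\Mc{C}_{a+2^{-n}}^{a+\delta+2^{-n}}$, also $(S_n^{a+\delta}-S_n^a)\circ g\to 0$ in $(\Mc{F},\Mc{E}_1)$.

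The remaining step — passing from these two facts to $\Mc{E}(r_{n,\delta})=\Mc{E}\bigl((\Mc{C}_0^\delta\circ q_n)\wedge S_n^{a+\delta}\circ g\bigr)\to0$ — is the crux, and I expect it to be the hardest part: $\Mc{E}$ is \emph{not} monotone under pointwise domination, so the bound $r_{n,\delta}\le\Mc{C}_0^\delta\circ q_n$ alone is not enough. I would resolve it with the strong local property \eqref{defstrloc}: off the shrinking ``shell'' $\{g\le a+\delta+2^{-n}\}$ the function $r_{n,\delta}$ coincides with a function assembled only from $\Mc{C}_0^\delta\circ q_n$ and $(S_n^{a+\delta}-S_n^a)\circ g$, so that $\Mc{E}(r_{n,\delta})$ can be bounded, via Corollary~\ref{corincl} and the triangle inequality, by $\Mc{E}(\Mc{C}_0^\delta\circ q_n)$, $\Mc{E}((S_n^{a+\delta}-S_n^a)\circ g)$ and an error supported on the shell, all of which tend to $0$ as $\delta\downarrow0$ with $n$ fixed; the delicate bookkeeping is keeping track of the additive constants that $\deco{\cdot}$ introduces when one invokes \eqref{defstrloc} and Corollary~\ref{corincl}(2). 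Once $F$ is known to be non-decreasing and right-continuous with the stated limits, it is the distribution function of a unique finite Borel measure on $\Mb{R}$ of total mass $\Mc{E}(f)<\infty$.
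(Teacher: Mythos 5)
Your treatment of monotonicity, of the limits at $\pm\infty$, and your reduction of right-continuity to the claim that $\Mc{E}(r_{n,\delta})\to0$ as $\delta\downarrow0$ for each \emph{fixed} $n$ are all correct and close in spirit to the paper. The gap is precisely in the step you yourself flag as the crux. The plan you sketch for it --- invoke \eqref{defstrloc} and absorb ``an error supported on the shell'' which ``tends to $0$'' --- is not executable at this stage of the development: $\Mc{E}$ is a global functional, the energy measures have not yet been constructed, and nothing proved so far allows you to conclude that a function has small energy merely because its support shrinks; \eqref{defstrloc} gives additivity for functions with separated supports, not smallness on small sets. So, as written, the proof is incomplete at its decisive point.

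The ingredients you have already assembled do, however, close the gap by a cleaner route than the one you sketch. Writing $u=T_n\circ f$, $s=S_n^a\circ g$, $s'=S_n^{a+\delta}\circ g$, the elementary identity $(u\wedge s')-(u\wedge s)=(u-s)^+\wedge(s'-s)$ (valid since $0\le s\le s'$) shows that your formula can be sharpened to $r_{n,\delta}=(\Mc{C}_0^\delta\circ q_n)\wedge\bigl((S_n^{a+\delta}-S_n^a)\circ g\bigr)$, because $s'-s\le\delta$ pointwise. Since the bump $S_n^{a+\delta}-S_n^a$ is nonnegative, $(S_n^{a+\delta}-S_n^a)\circ g=\deco{(S_n^{a+\delta}-S_n^a)\circ g}+c$ with $c\ge0$, and Corollary \ref{corincl}~(2) gives $\Mc{E}(r_{n,\delta})^{1/p}\le\Mc{E}(\Mc{C}_0^\delta\circ q_n)^{1/p}+\Mc{E}\bigl(\deco{(S_n^{a+\delta}-S_n^a)\circ g}\bigr)^{1/p}$. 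Both terms vanish as $\delta\downarrow0$ for fixed $n$ by exactly the auxiliary fact you proved ($\Mc{E}(\Mc{C}_b^{b+\delta}\circ h)\downarrow0$), applied to $q_n$ and, via your decomposition $S_n^{a+\delta}-S_n^a=\Mc{C}_a^{a+\delta}-\Mc{C}_{a+2^{-n}}^{a+\delta+2^{-n}}$, to $g$. Note that your original identity with $S_n^{a+\delta}\circ g$ in the minimum would \emph{not} suffice here, since $\Mc{E}\bigl(\deco{S_n^{a+\delta}\circ g}\bigr)$ does not vanish as $\delta\downarrow0$ with $n$ fixed. For comparison, the paper avoids estimating $\Mc{E}(r_{n,\delta})$ directly: it applies Lemma \ref{lemconvconv} to the increments $\ell_m=f_n^{g,a+2^{-m}}-f_n^{g,a}$ themselves, using the identity $\ell_{m_3}=\bigl(\frac{\ell_{m_1}+\ell_{m_2}}{2}\wedge2^{-m_3}\bigr)\wedge S_{m_3}^{a+2^{-n}}\circ g$ to verify \eqref{eqconv}; both routes ultimately rest on the same uniform-convexity mechanism.
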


\begin{proof}
	\begin{itemize}
		\item (Monotonicity.) Let $a<b$. Then $f_n^{g,a}=f_n^{g,b}\wedge (S_n^a\circ g)$ for any $n$ with $2^{-n}< b-a$; hence,
		\[
		F^{g}_f(a)\le \lim_{n\to\infty}\Bigl(\Mc{E}(f_n^{g,b})^{1/p}+\Mc{E}^\Mc{V}\bigl(\deco{S_n^a\circ g}\bigr)^{1/p}\Bigr)^p
		=F^{g}_f(b),
		\]
		by Lemma \ref{lemfnaconv}.
		\item (Right continuity.) First, we prove that for any fixed $n\in\Mb{N}$ and $a\in\Mb{R}$,
		\[
		f_{n}^{g, a+2^{-m}}\longrightarrow f_n^{g,a}\quad\text{in the }\Mc{E}_1\text{-norm as }m\to\infty.
		\]
		Set $\ell_m:=f_{n}^{g,a+2^{-m}}-f_n^{g,a}$. Then $|\ell_m|\le 2^{-m}$ and also
		$|\ell_m|\le |f_{n}^{g, a+2^{-m}}|+|f_n^{g,a}|\le 2\,T_n\circ f$.
		Moreover, for any $n<m_1\le m_2\le m_3$,
		\[
		\ell_{m_3}
		=\left(\left(\frac{\ell_{m_1}+\ell_{m_2}}{2}\right)\wedge 2^{-m_3}\right)\wedge \left(S_{m_3}^{a+2^{-n}}\circ g\right),
		\]
		and hence, by Corollary \ref{corincl}~(2),
		\[
		\Mc{E}(\ell_{m_3})\le\Biggl(\Mc{E}\left(\frac{\ell_{m_1}+\ell_{m_2}}{2}\right)^{\frac{1}{p}}
		+\Mc{E}^\Mc{V}\left(\deco{S_{m_3}^{a+2^{-n}}\circ g}\right)^{\frac{1}{p}}\Biggr)^p.
		\]
		Using Lemma \ref{lemconvconv} together with the dominated convergence theorem, we conclude that
		$f_{n}^{g, a+2^{-m}}\to f_n^{g,a}$ in the $\Mc{E}_1$-norm as $m\to\infty$.
		By Lemma \ref{lemfnaconv}, it follows that
		\[
		F^g_f(a)=\lim_{n\to\infty}\lim_{m\to\infty}\Mc{E}(f_{n}^{g, a+2^{-m}})
		\ge \limsup_{m\to\infty} F^g_f(a+2^{-m}).
		\]
		Together with monotonicity, this yields the right continuity of $F^g_f$.
		\item Since $g$ is bounded, we have $S_{n}^{\|g\|_\infty}\circ g=2^{-n}$ and $S_{n}^{-\|g\|_\infty-1}\circ g=0$ on $X$.
		Moreover, $\Mc{E}(T_n\circ f)=\Mc{E}(f)$ by Lemma \ref{lemfold}.
		Therefore,
		$
		\lim_{a\to\infty}F^g_f(a)=\Mc{E}(f)
		$ and $
		\lim_{a\to-\infty}F^g_f(a)=0
		$
		by definition.
	\end{itemize}
\end{proof}

\begin{lem}\label{lemmeascap}
	Let $a<b<c$, $f\in\core$, and $g\in\Mc{V}$. Then
	\[
	\lim_{n\to\infty}\Mc{E}\bigl((T_n\circ f)\wedge(S_n^c\circ g)\wedge (S_n^{-b}\circ (-g))\bigr)\le F^g_f(c)-F^g_f(a).
	\]
\end{lem}

\begin{proof}
	Since $(T_n\circ f)\wedge (S_n^c\circ g)\wedge (S_n^{-b}\circ (-g))=f_n^{h,(c-b)/2}$ for $h=\deco{\bigl|g-\frac{b+c}{2}\bigr|}$, the limit exists.
	If $2^{-(n-1)}<b-a$, then
	\[
	f_n^{g,a}+\left((T_n\circ f)\wedge (S_n^c\circ g)\wedge (S_n^{-b}\circ (-g))\right)
	=f_n^{g,c}\wedge \left((S_n^a\circ g)+ (S_n^{-b}\circ (-g))\right).
	\]
	From this, together with \ref{defstrloc} and Corollary \ref{corincl}~(2), we obtain
	\begin{align*}
		&\Mc{E}\left((T_n\circ f)\wedge (S_n^c\circ g)\wedge (S_n^{-b}\circ (-g))\right)\\
		=&\Mc{E}\left(f_n^{g,a}+\left((T_n\circ f)\wedge (S_n^c\circ g)\wedge (S_n^{-b}\circ (-g))\right)\right)-\Mc{E}(f_n^{g,a})\\
		\le&\left(\Mc{E}(f_n^{g,c})^{\frac{1}{p}}
		+\Mc{E}^\Mc{V}\left(\deco{S_n^a\circ g}\right)^{\frac{1}{p}}
		+\Mc{E}^\Mc{V}\left(\deco{S_n^{-b}\circ(-g)}\right)^{\frac{1}{p}} \right)^p-\Mc{E}(f_n^{g,a}).
	\end{align*}
	Letting $n\to\infty$ yields the desired inequality.
\end{proof}

\begin{lem}\label{lemFcover}
	Let $N\in\Mb{N}$, $f\in\core$, and $g, h_i \in \Mc{V}\ (1\le i\le N)$ such that
	$\{g\le a\}\subset \cup_{i=1}^N \{h_i\le  b_i\}$ for some $a, b_i \in\Mb{R}\ (1\le i\le N)$.
	Then $F^g_f(a)\le\sum_{i=1}^N F_f^{h_i}(b_i)$.
	In particular, $F^g_f(a)\le F^{h_1}_f(b_1)$ if $\{g\le a\}\subset\{h_1\le b_1\}$.
\end{lem}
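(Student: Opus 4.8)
The plan is to reduce the whole statement in one stroke to a single ``covering function'' assembled from the $h_i$, and then to exploit the lattice structure of the $f_n^{\cdot,\cdot}$ together with strong locality. Since each $F^{h_i}_f$ is right continuous (Lemma \ref{lemfnadist}), it suffices to prove $F^g_f(a)\le\sum_{i=1}^k F^{h_i}_f(b_i+\epsilon)$ for every $\epsilon>0$ and then let $\epsilon\downarrow0$. So fix $\epsilon>0$, set $B:=\max_{1\le j\le k}b_j$ and $G:=\bigwedge_{i=1}^k(h_i-b_i+B)$. The map $(t_1,\dots,t_k)\mapsto\bigwedge_i(t_i-b_i+B)$ vanishes at $\zero$ and $G\in V$, so $G\in\core$ by Corollary \ref{corV}; moreover $\{G\le B\}=\bigcup_{i=1}^k\{h_i\le b_i\}\supset\{g\le a\}$. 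Because $\wedge$, $\vee$ and $(\cdot)^+$ pairwise distribute over one another and constants pass through $\vee,\wedge$, one computes $-G+B+\epsilon=\bigvee_i(b_i+\epsilon-h_i)$, hence
\[ S_n^{B+\epsilon}\circ G=\bigvee_{i=1}^k S_n^{b_i+\epsilon}\circ h_i,\qquad\text{so}\qquad f_n^{G,B+\epsilon}=T_n\circ f\wedge\Bigl(\bigvee_{i=1}^k S_n^{b_i+\epsilon}\circ h_i\Bigr)=\bigvee_{i=1}^k f_n^{h_i,b_i+\epsilon}. \]

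Next I would compare $f_n^{g,a}$ with $f_n^{G,B+\epsilon}$. Since $\{G<B+\epsilon\}$ is open, $\supp[f]$ is compact, and $\supp[f]\cap\{g\le a+2^{-n}\}$ decreases to $\supp[f]\cap\{g\le a\}\subset\{G\le B\}\subset\{G<B+\epsilon\}$, there is $n_0$ with $\supp[f]\cap\{g\le a+2^{-n}\}\subset\{G<B+\epsilon\}$ for $n\ge n_0$. As $\supp[f_n^{g,a}]\subset\supp[f]\cap\{g\le a+2^{-n}\}$ (both $T_n\circ f$ and $S_n^a\circ g$ vanishing appropriately), $0\le f_n^{g,a}\le 2^{-n}$, and $S_n^{B+\epsilon}\circ G\equiv2^{-n}$ on $\{G<B+\epsilon\}$ while $S_n^{B+\epsilon}\circ G\ge0$ everywhere, we obtain $f_n^{g,a}\le S_n^{B+\epsilon}\circ G$, and therefore for $n\ge n_0$
\[ f_n^{g,a}=f_n^{g,a}\wedge S_n^{B+\epsilon}\circ G=f_n^{G,B+\epsilon}\wedge S_n^a\circ g=f_n^{G,B+\epsilon}\wedge\bigl(\deco{S_n^a\circ g}+c_n\bigr), \]
for a constant $c_n\ge0$. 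Corollary \ref{corincl}(2) then gives $\Mc{E}(f_n^{g,a})^{1/p}\le\Mc{E}(f_n^{G,B+\epsilon})^{1/p}+\Mc{E}(\deco{S_n^a\circ g})^{1/p}$. Now $\Mc{E}(\deco{S_n^a\circ g})\to0$ (this is shown inside the proof of Lemma \ref{lemfnaconv}) and — the crucial point — $\Mc{E}(f_n^{G,B+\epsilon})=\Mc{E}\bigl(\bigvee_i f_n^{h_i,b_i+\epsilon}\bigr)\le\sum_i\Mc{E}(f_n^{h_i,b_i+\epsilon})$, obtained by iterating the submodularity inequality $\Mc{E}(u\vee v)+\Mc{E}(u\wedge v)\le\Mc{E}(u)+\Mc{E}(v)$ for $u,v\in\core$ (using $\Mc{E}\ge0$ and $u\vee v\in\core$). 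Taking $n\to\infty$ — each $\lim_n\Mc{E}(f_n^{h_i,b_i+\epsilon})$ exists by Lemma \ref{lemfnaconv} — yields $F^g_f(a)^{1/p}\le\bigl(\sum_i F^{h_i}_f(b_i+\epsilon)\bigr)^{1/p}$, i.e.\ $F^g_f(a)\le\sum_i F^{h_i}_f(b_i+\epsilon)$; letting $\epsilon\downarrow0$ and invoking right continuity of the $F^{h_i}_f$ completes the argument, the ``Specifically'' part being the case $k=1$.

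The main obstacle is the submodularity $\Mc{E}(u\vee v)+\Mc{E}(u\wedge v)\le\Mc{E}(u)+\Mc{E}(v)$. The triangle inequality for $\Mc{E}^{1/p}$ together with Corollary \ref{corincl}(2) only delivers the weaker $\Mc{E}(u\vee v)\le\bigl(\Mc{E}(u)^{1/p}+\Mc{E}(v)^{1/p}\bigr)^p$, which is not subadditive in $\Mc{E}$ and so would not even let the eventual outer-measure construction go through. Since for $p\neq2$ one cannot polarize, obtaining the sharp inequality requires genuinely using the strong local property \eqref{defstrloc} — localizing $u\vee v$ onto $\{u>v\}$ and $\{v>u\}$ — together with the normal-contraction results of Section \ref{secpre}, and the delicate point is the behaviour on $\{u=v\}$.
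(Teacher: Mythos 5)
Your reduction to a single covering function is fine as far as it goes: $G=\bigwedge_i(h_i-b_i+B)\in\core$ by Corollary \ref{corV}, the lattice identity $f_n^{G,B+\epsilon}=\bigvee_i f_n^{h_i,b_i+\epsilon}$ is correct, and the compactness argument giving $f_n^{g,a}=f_n^{G,B+\epsilon}\wedge S_n^a\circ g$ for large $n$ is sound (the paper uses the same decreasing-compact-sets device and, in its induction step, essentially the same $\wedge$-combination $w=(h_k-b_k)\wedge(h_{k+1}-b_{k+1})$). But the entire analytic content of the lemma has been pushed into the submodularity inequality $\Mc{E}(u\vee v)+\Mc{E}(u\wedge v)\le\Mc{E}(u)+\Mc{E}(v)$, which you assert and do not prove, and which is not available under Assumption \ref{asm1}. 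You acknowledge this yourself, but the sketch you offer — ``localizing $u\vee v$ onto $\{u>v\}$ and $\{v>u\}$'' via \eqref{defstrloc} — does not work: strong locality only yields additivity for functions whose supports (after subtracting a constant) are \emph{disjoint}, and $\overline{\{u>v\}}$ and $\overline{\{v>u\}}$ generically meet along $\{u=v\}$; there is no restriction operator $u\mapsto u\indi_A$ available in $\Mc{F}$, and no two-variable contraction axiom in \eqref{defBanach}--\eqref{defreg} from which submodularity could be extracted. Worse, the natural proof of submodularity (in fact of the equality version) goes through the identities $\mu_{\langle u\vee v\rangle}(\{u>v\})=\mu_{\langle u\rangle}(\{u>v\})$, etc., i.e., through \eqref{condmeasloc} for the energy measures — exactly the object that Lemma \ref{lemFcover} is needed to construct. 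Invoking it here is circular.

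The paper avoids this entirely: it proves the lemma by induction on $k$, and in the base case $k=2$ it first enlarges $b_i$ to $b_i'$ and $a$ to $a'$, then decomposes $f_n^{g,a}$ using functions ($f_n^{h_1,b_1'+\epsilon}$, $f_n^{\deco{u_\epsilon},-\delta(u_\epsilon)}$, $v_\epsilon$) whose relevant supports are made \emph{genuinely} disjoint by the $\epsilon$-shifts, so that \eqref{defstrloc} applies with exact additivity; the overlap region contributes only the term $\nu_f^{h_1}((b_1',b_1'+2\epsilon])^{1/p}$, which is killed by letting $\epsilon\downarrow0$ and using right-continuity of $F_f^{h_i}$. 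That $\epsilon$-layered separation is precisely the work your proposal defers to the unproven submodularity, so as written the proof has a genuine gap at its central step.
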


\begin{proof}
	We may assume $g, h_i\in\core$ without loss of generality.\par
	We first consider the case $N=2$. We prove that for any $b'_i>b_i\ (i=1,2)$, there exists $a'>a$ such that
	\[
	\{g\le a'\}\subset \{h_1\le b'_1\}\cup\{h_2\le b'_2\}.
	\]
	If $a<0$, then $\bigl\{\{g\le a'\}\setminus(\{h_1< b'_1\}\cup \{h_2< b'_2\})\bigr\}_{a'\in(a,0)}$
	is a decreasing family of compact sets whose intersection is empty. Hence, there exists $a'\in(a,0)$ with
	$\{g\le a'\}\subset \{h_1< b'_1\}\cup \{h_2< b'_2\}$.
	Similarly, if $a\ge0$, then there exists $a'>a$ such that
	$
	(\{g\le a'\}\cap \supp(g))\subset \{h_1< b'_1\}\cup \{h_2< b'_2\}.
	$
	Since $X\setminus\supp(g)\subset \{g\le a\}\subset \{h_1< b'_1\}\cup \{h_2< b'_2\}$, it follows that
	$\{g\le a'\}\subset \{h_1< b'_1\}\cup \{h_2< b'_2\}$.\par
	
	Fix $b_1'>b_1$, $b_2'>b_2$, and choose $a'>a$ as above.
	Let $n\in\Mb{N}$ and $\epsilon>0$ satisfy $2^{-n+1}<(a'-a)\wedge \epsilon$, and set
	\[
	u_\epsilon:=(-h_1+b'_1+2\epsilon)\vee(h_2-b'_2).
	\]
	Then $\supp\bigl(f_n^{h_1,b_1'+\epsilon}\bigr)\cap \supp\bigl(f_n^{u_\epsilon,0}\bigr)=\emptyset$
	because $\{u_\epsilon\le 2^{-n}\}\subset\{h_1>b_1'+\frac{3}{2}\epsilon\}$.
	Hence,
	\begin{align*}
		\Mc{E}(f_n^{h_1,b'_1+\epsilon}+f_n^{\deco{u_\epsilon},0})
		=&\Mc{E}(f_n^{h_1,b'_1+\epsilon})+\Mc{E}(f_n^{\deco{u_\epsilon},0})\\
		\le&\Mc{E} (f_n^{h_1,b'_1+\epsilon})
		+\left(\Mc{E}(f_n^{h_2,b'_2+\epsilon})^{\frac{1}{p}}+\Mc{E}^\Mc{V}(S_n^0\circ \deco{u_\epsilon})^{\frac{1}{p}} \right)^p
	\end{align*}
	by \ref{defstrloc} and Corollary \ref{corincl}~(2), because $f_n^{h_2,b_2'+\epsilon}=T_n\circ f$ on $\{u_\epsilon\le 2^{-n}\}$.
	
	Moreover, let
	\[
	v_\epsilon:=(T_n\circ f)\wedge (S_n^{b'_1+2\epsilon}\circ h_1)\wedge (S_n^{-(b'_1+\epsilon)}\circ(-h_1)).
	\]
	Then
	\begin{gather*}
		\begin{aligned}
			\left(\left(f_n^{h_1,b'_1+\epsilon}+f_n^{\deco{u_\epsilon},0}\right)\vee v_\epsilon \right)\Big\vert_{\{h_1\le b'_1+2\epsilon\}}
			&=\left(f_n^{h_1,b_1'+\epsilon}\vee v_\epsilon\right)\Big\vert_{\{h_1\le b_1'+2\epsilon\}}\\
			&=(T_n\circ f)\vert_{\{h_1\le b_1'+2\epsilon\}},
		\end{aligned}
		\shortintertext{and}
		\begin{aligned}
			\left(\left(f_n^{h_1,b'_1+\epsilon}+f_n^{\deco{u_\epsilon},0}\right)\vee v_\epsilon \right)\Big\vert_{\{h_2\le b'_2\}\setminus\{h_1\le b'_1+2\epsilon\}}
			&=f_n^{u_\epsilon,0}\vert_{\{h_2\le b'_2\}\setminus\{h_1\le b'_1+2\epsilon\}}\\
			&=(T_n\circ f)\vert_{\{h_2\le b'_2\}\setminus\{h_1\le b'_1+2\epsilon\}}.
		\end{aligned}
	\end{gather*}
	Hence,
	\[
	f_n^{g,a}=\Bigl(\bigl(f_n^{h_1,b'_1+\epsilon}+f_n^{\deco{u_\epsilon},0}\bigr)\vee v_\epsilon\Bigr)\wedge (S_n^a\circ g).
	\]
	Therefore,
	\begin{align*}
		\lim_{n\to\infty}\Mc{E}(f_n^{g,a})
		\le&\lim_{n\to\infty}\left(\Mc{E}\left(f_n^{h_1,b'_1+\epsilon}+f_n^{\deco{u_\epsilon},0}\right)^{\frac{1}{p}}
		+\Mc{E}(v_\epsilon)^{\frac{1}{p}}+\Mc{E}^\Mc{V}\left(\deco{S_n^a\circ g}\right)^{\frac{1}{p}} \right)^p\\
		\le&\left(\left(F_f^{h_1}(b'_1+\epsilon)+F_f^{h_2}(b'_2+\epsilon) \right)^{\frac{1}{p}}
		+\left(F_f^{h_1}(b'_1+2\epsilon)-F_f^{h_1}(b'_1) \right)^{\frac{1}{p}}\right)^p,
	\end{align*}
	by Lemma \ref{lemmeascap}. Letting $\epsilon\to 0$, $b'_1\to b_1$, and $b'_2\to b_2$, we obtain the desired inequality by the right continuity of $F_f^{h_i}$.\par
	
	The case $N=1$ follows from the case $N=2$ by taking $h_2\equiv 0$ and $b_2=-1$.\par
	
	We prove the case $N>2$ by induction. Since
	\[
	\{h_k\le b_k\}\cup\{h_{k+1}\le b_{k+1}\}=\{\deco{w}\le0\}
	\quad\text{for } w:=(h_k-b_k)\wedge(h_{k+1}-b_{k+1}),
	\]
	we conclude that
	\[
	F_f^g(a)\le\sum_{i=1}^{N-1}F_f^{h_i}(b_i)+F_f^{\deco{w}}(0)\le\sum_{i=1}^{N}F_f^{h_i}(b_i),
	\]
	by the induction hypothesis.
\end{proof}

\begin{defi}\label{defOM}
	For $f\in\core$, we define
	\begin{gather*}
		\mu_{\langle f \rangle}(U)=\sup\{F_f^g(a)\mid g\in\core,\ a<0,\ \{g\le a\}\subset U\}
		\intertext{for an open set $U\subset X$, and}
		\mu_{\langle f \rangle}(A)=\inf\{\mu_{\langle f \rangle}(U)\mid A\subset U,\ U\text{ is open}\}
	\end{gather*}
	for any $A\subset X$.
\end{defi}

\begin{thm}\label{thmMOM}
	$\mu_{\langle f \rangle}$ is a metric outer measure; that is, $\mu_{\langle f \rangle}$ is an outer measure satisfying
	$\mu_{\langle f \rangle}(A\cup B)=\mu_{\langle f \rangle}(A)+\mu_{\langle f \rangle}(B)$
	for $A,B\subset X$ with $d(A,B)>0$.
\end{thm}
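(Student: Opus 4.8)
The plan is to verify the three outer-measure axioms for $\mu_{\langle f\rangle}$ and then upgrade monotone subadditivity to genuine additivity on sets at positive distance, the latter being the substantial part. Monotonicity is immediate from Definition \ref{defOM}: enlarging an open set enlarges the family of pairs $(g,a)$ with $g\in\core$, $a<0$, $\{g\le a\}\subseteq U$ over which the supremum is taken, and enlarging an arbitrary set shrinks the family of open supersets; in particular the two clauses of Definition \ref{defOM} agree on open sets, and $\mu_{\langle f\rangle}\le\Mc{E}(f)<\infty$ by Lemma \ref{lemfnadist}. For $\mu_{\langle f\rangle}(\emptyset)=0$, any pair $(g,a)$ competing for $\emptyset$ has $\{g\le a\}=\emptyset\subseteq\{g\le b\}$ for every $b\in\Mb{R}$, so $F^g_f(a)\le F^g_f(b)$ by Lemma \ref{lemFcover}; letting $b\to-\infty$ and using Lemma \ref{lemfnadist} forces $F^g_f(a)=0$ (it is $\ge0$ by Lemma \ref{lemfnaconv}).

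Next, countable subadditivity, which it suffices to prove for open sets: given $A=\Cup_nA_n$ and $\epsilon>0$, pick open $U_n\supseteq A_n$ with $\mu_{\langle f\rangle}(U_n)\le\mu_{\langle f\rangle}(A_n)+\epsilon2^{-n}$ and use monotonicity together with subadditivity for $\Cup_nU_n$. For open $U=\Cup_iU_i$ and a competing pair $(g,a)$, the set $\{g\le a\}$ is a compact subset of $\supp[g]$ because $a<0$ and $g\in C_c(X)$; hence it is covered by finitely many $U_1,\dots,U_k$, and by the standard covering lemma in locally compact Hausdorff spaces there are compact $K_i\subseteq U_i$ with $\{g\le a\}\subseteq\Cup_{i=1}^kK_i$. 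Combining Urysohn's lemma with the density of $\core$ in $(C_c(X),\|\cdot\|_\infty)$ from \eqref{defreg} and the stability of $\core$ under post-composition with Lipschitz functions vanishing at $0$ (Corollary \ref{corincl}(3)), one produces $\varphi_i\in\core$ with $0\le\varphi_i\le1$, $\varphi_i\equiv1$ on $K_i$, and $\supp[\varphi_i]\subseteq U_i$; then $h_i:=-\varphi_i\in\core$ and $b_i:=-\tfrac12<0$ give $K_i\subseteq\{h_i\le b_i\}\subseteq U_i$. Lemma \ref{lemFcover} now yields $F^g_f(a)\le\sum_{i=1}^kF^{h_i}_f(b_i)\le\sum_i\mu_{\langle f\rangle}(U_i)$, and taking the supremum over $(g,a)$ proves subadditivity on open sets.

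Finally the metric property: subadditivity gives ``$\le$'', and ``$\ge$'' reduces to showing $\mu_{\langle f\rangle}(V_1\cup V_2)\ge\mu_{\langle f\rangle}(V_1)+\mu_{\langle f\rangle}(V_2)$ for any two \emph{disjoint} open sets, since if $\delta:=d(A,B)>0$ and $U\supseteq A\cup B$ is open, then $V_1:=U\cap\{d(\cdot,A)<\delta/3\}$ and $V_2:=U\cap\{d(\cdot,B)<\delta/3\}$ are disjoint open subsets of $U$ containing $A$ and $B$, whence $\mu_{\langle f\rangle}(U)\ge\mu_{\langle f\rangle}(V_1\cup V_2)\ge\mu_{\langle f\rangle}(V_1)+\mu_{\langle f\rangle}(V_2)\ge\mu_{\langle f\rangle}(A)+\mu_{\langle f\rangle}(B)$, and one takes the infimum over $U$. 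To prove the reduced statement, fix $\epsilon>0$ and competing pairs $(g_i,a_i)$ for $V_i$ with $F^{g_i}_f(a_i)>\mu_{\langle f\rangle}(V_i)-\epsilon$, and set $a:=a_1\wedge a_2<0$. For $i$ with $a_i>a$, replace $g_i$ by $\tilde g_i:=g_i-(a_i-a)\chi_i$ with $\chi_i\in\core$, $0\le\chi_i\le1$, $\chi_i\equiv1$ on a neighbourhood of the compact set $\{g_i\le a_i\}$, and $\supp[\chi_i]\subseteq V_i$ (built exactly like the $\varphi_i$ above); using the translation identity $S_n^a(t-(a_i-a))=S_n^{a_i}(t)$ on that neighbourhood and the vanishing of both $S_n^a\circ\tilde g_i$ and $S_n^{a_i}\circ g_i$ off it once $2^{-n}$ is small, one gets $\{\tilde g_i\le a\}=\{g_i\le a_i\}\subseteq V_i$, $f_n^{\tilde g_i,a}=f_n^{g_i,a_i}$ for all large $n$, and hence $F^{\tilde g_i}_f(a)=F^{g_i}_f(a_i)$. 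Put $g:=\tilde g_1\wedge\tilde g_2\in\core$ (Corollary \ref{corincl}(2)), so $\{g\le a\}\subseteq V_1\cup V_2$. As $S_n^a$ is non-increasing, $S_n^a\circ g=(S_n^a\circ\tilde g_1)\vee(S_n^a\circ\tilde g_2)$; since $\supp[S_n^a\circ\tilde g_i]\subseteq\ol{\{\tilde g_i<a+2^{-n}\}}$ and the compact sets $\{\tilde g_i\le a+2^{-n}\}$ decrease to $\{\tilde g_i\le a\}\subseteq V_i$, these two supports are disjoint for all large $n$, so the join equals the sum and $f_n^{g,a}=f_n^{\tilde g_1,a}+f_n^{\tilde g_2,a}$ with disjoint supports. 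Strong locality \eqref{defstrloc} gives $\Mc{E}(f_n^{g,a})=\Mc{E}(f_n^{\tilde g_1,a})+\Mc{E}(f_n^{\tilde g_2,a})$; letting $n\to\infty$ (Lemma \ref{lemfnaconv}), $F^g_f(a)=F^{g_1}_f(a_1)+F^{g_2}_f(a_2)>\mu_{\langle f\rangle}(V_1)+\mu_{\langle f\rangle}(V_2)-2\epsilon$, and since $(g,a)$ competes for $V_1\cup V_2$, letting $\epsilon\downarrow0$ finishes.

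I expect the last paragraph to be the main obstacle: competing pairs for $V_1$ and $V_2$ carry different thresholds $a_1\ne a_2$, and $F^g_f(a)$ is not invariant under the naive rescalings of $g$, so one must first transport them to a common threshold by the cut-off translation before strong locality \eqref{defstrloc} applies. A recurring nuisance is that Urysohn's lemma only produces functions in $C_c(X)$, so each auxiliary cut-off has to be pulled back into $\core$ using regularity \eqref{defreg} and the contraction stability of $\core$ (Corollary \ref{corincl}).
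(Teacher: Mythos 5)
Your proof is correct and follows the paper's strategy step for step: $\mu_{\langle f\rangle}(\emptyset)=0$ from the behaviour of $F^g_f$; countable subadditivity on open sets via compactness of $\{g\le a\}$, a finite subcover, cut-off functions pulled into $\core$ using \eqref{defreg}, and Lemma \ref{lemFcover}; and metric additivity by reducing to disjoint open sets and merging the two competing pairs into a single one at a common threshold so that \eqref{defstrloc} applies. The only real difference is the merging device --- the paper forms the single function $u=((g-a-s)\wedge 0)-((h-b-s)\wedge 0)$ and uses the pair $(-|u|,-s)$, whereas you shift each $g_i$ by $(a_i-a)\chi_i$ with a cut-off $\chi_i$ supported in $V_i$ and then take $\tilde g_1\wedge\tilde g_2$ --- and both constructions are valid.
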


\begin{proof}
	We have $\{g\le a\}=\emptyset$ if and only if $a<\min_{x\in X} g(x)$, which implies $\mu_{\langle f \rangle}(\emptyset)=0$.
	Next, we prove countable subadditivity for open sets.
	Let $U$ and $U_i\ (i\ge1)$ be open sets with $U\subset \cup_{i\ge 1}U_i$.
	Fix $g\in\core$ and $a<0$ such that $\{g\le a\}\subset U$ (such $g,a$ exist even if $U=\emptyset$).
	Let
	\[
	\Mf{C}_i=\{B_d(x,r)\mid x\in U_i,\ \ol{B_d(x,2r)}\text{ is compact, and }\ol{B_d(x,2r)}\subset U_i\}
	\]
	for $i\ge1$, and let $\Mf{C}_F$ be a finite subcover of $\{g\le a\}$ chosen from $\cup_{i\ge1}\Mf{C}_i$.
	We may assume that $\Mf{C}_i\cap\Mf{C}_F\ne\emptyset\ (1\le i\le N)$ and $\Mf{C}_F\subset\cup_{i=1}^N \Mf{C}_i$ for some $N\ge1$.
	Define
	\begin{align*}
		E_i&=\cup\{\ol{B_d(x,r)}\mid B_d(x,r)\in \Mf{C}_i\cap\Mf{C}_F\},\\
		V_i&=\cup\{\ol{B_d(x,2r)}\mid B_d(x,r)\in \Mf{C}_i\cap\Mf{C}_F\},\\
		\tau_i(y)&=\max\left\{\left(\left(2-\frac{d(x,y)}{r}\right)\wedge1\right)^+\ \bigg\vert\ B_d(x,r)\in\Mf{C}_i\cap\Mf{C}_F\right\}.
	\end{align*}
	Note that $\tau_i\in C_c(X)$, $\supp(\tau_i)\subset V_i$, and $\tau_i|_{E_i}=1$ for any $i$.
	By \ref{defreg}, there exists $h_i\in\core$ with $\|h_i-\tau_i\|_{\infty}<1/2$, and thus
	\[
	E_i\subset \left\{-h_i\le -\frac{1}{2}\right\}\subset V_i\subset U_i
	\quad\text{and}\quad
	\{g\le a\}\subset\cup_{i=1}^N E_i\subset \cup_{i=1}^N\left\{-h_i\le-\frac{1}{2}\right\}.
	\]
	Hence, 
	$
	F_f^g(a)\le \sum_{i=1}^N F_f^{-h_i}(-1/2)\le\sum_{i=1}^N\mu_{\langle f \rangle}(U_i)
	$ by Lemma \ref{lemFcover},
	and therefore $\mu_{\langle f \rangle}(U)\le\sum_{i=1}^\infty \mu_{\langle f \rangle}(U_i)$.
	Subadditivity for general sets follows from that for open sets.\par
	
	We next show that $\mu_{\langle f \rangle}$ is a metric outer measure.
	Let $U_1,U_2$ be open sets satisfying $d(U_1,U_2)>0$. Take any $g,h\in\core$ and $a,b<0$, such that $\{g\le a\}\subset U_1$ and $\{h\le b\}\subset U_2$.
	By the decreasing compact sets argument, there exist $a'\in(a,0)$ and $b'\in(b,0)$ with $\{g\le a'\}\cap \{h\le b'\}=\emptyset$.
	Let
	\[
	s=(a'-a)\wedge(b'-b)
	\quad\text{and}\quad
	u=\left((g-a-s)\wedge0 \right)-\left((h-b-s)\wedge0\right).
	\]
	Then $u\in\core$, $\{u\le -s\}=\{g\le a\}$, and $\{-u\le -s\}=\{h\le b\}$.
	By definition of $S_n$, we have $f_n^{-|u|,-s}=f_n^{u,-s}+f_n^{-u,-s}$ for sufficiently large $n$.
	Thus,
	\[
	\mu_{\langle f \rangle}(U_1\cup U_2)\ge F^{-|u|}_f(-s)=F^{u}_f(-s)+F^{-u}_f(-s)=F^g_f(a)+F^h_f(b),
	\]
	by \ref{defstrloc} and Lemma \ref{lemFcover}.
	This implies $\mu_{\langle f \rangle}(U_1\cup U_2)\ge\mu_{\langle f \rangle}(U_1)+\mu_{\langle f \rangle}(U_2)$.
	
	Now let $A,B\in 2^X$ satisfy $d(A,B)>0$.
	For any open set $U$ with $A\cup B\subset U$,
	\begin{align*}
		&\mu_{\langle f\rangle}(U)\\
		\ge&\mu_{\langle f\rangle}\bigl(U\cap\{x\mid d(x,A)<d(A,B)/3 \}\bigr)
		+\mu_{\langle f\rangle}\bigl(U\cap\{x\mid d(x,B)<d(A,B)/3\}\bigr)\\
		\ge&\ \mu_{\langle f \rangle}(A)+\mu_{\langle f \rangle}(B).
	\end{align*}
	This concludes the proof.
\end{proof}

\begin{cor}\label{Cormeas}
	$\mu_{\langle f \rangle}|_{\Mc{B}(X)}$ is a Radon measure.
\end{cor}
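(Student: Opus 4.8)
The plan is to read everything off from Theorem~\ref{thmMOM} together with the shape of Definition~\ref{defOM}. By Carathéodory's criterion a metric outer measure makes every Borel set measurable, so Theorem~\ref{thmMOM} already yields that $\mu_{\langle f\rangle}|_{\Mc{B}(X)}$ is a Borel measure; it then remains to check it is finite (hence finite on compact sets) and both inner and outer regular. Finiteness is immediate: by the monotonicity of $F_f^g$ established in Lemma~\ref{lemfnadist} we have $F_f^g(a)\le\lim_{t\to\infty}F_f^g(t)=\Mc{E}(f)$ for every $g\in\core$ and $a<0$, so $\mu_{\langle f\rangle}(X)=\sup\{F_f^g(a)\mid g\in\core,\ a<0\}\le\Mc{E}(f)<\infty$. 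Outer regularity, $\mu_{\langle f\rangle}(A)=\inf\{\mu_{\langle f\rangle}(U)\mid A\subset U\text{ open}\}$, is literally part of Definition~\ref{defOM}, and it is consistent on open sets because $\mu_{\langle f\rangle}$ is monotone on open sets (a larger open set admits more pairs $(g,a)$ in the defining supremum).

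For inner regularity the key remark is that whenever $g\in\core$ and $a<0$, the set $\{g\le a\}$ is a closed subset of the compact set $\supp[g]$, hence compact; and for every open $V\supset\{g\le a\}$ the pair $(g,a)$ is admissible in the supremum defining $\mu_{\langle f\rangle}(V)$, so $\mu_{\langle f\rangle}(V)\ge F_f^g(a)$, whence $\mu_{\langle f\rangle}(\{g\le a\})\ge F_f^g(a)$. Consequently, for any open $U$,
\[
\sup\bigl\{\mu_{\langle f\rangle}(K)\mid K\subset U\text{ compact}\bigr\}\ \ge\ \sup\bigl\{F_f^g(a)\mid g\in\core,\ a<0,\ \{g\le a\}\subset U\bigr\}\ =\ \mu_{\langle f\rangle}(U),
\]
and the reverse inequality is monotonicity; thus open sets are inner regular by compact sets. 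To pass to an arbitrary Borel set $A$, I would use that $(X,d)$, being locally compact and separable, is $\sigma$-compact, say $X=\bigcup_n K_n$ with $K_n$ compact and increasing: given $\varepsilon>0$, outer regularity applied to $X\setminus A$ produces an open $U\supset X\setminus A$ with $\mu_{\langle f\rangle}(U\cap A)=\mu_{\langle f\rangle}(U)-\mu_{\langle f\rangle}(X\setminus A)<\varepsilon$; then $C:=X\setminus U$ is a closed subset of $A$ with $\mu_{\langle f\rangle}(A\setminus C)<\varepsilon$, and choosing $n$ with $\mu_{\langle f\rangle}(C\setminus K_n)<\varepsilon$ gives a compact $K:=C\cap K_n\subset A$ with $\mu_{\langle f\rangle}(A\setminus K)<2\varepsilon$. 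Hence $\mu_{\langle f\rangle}|_{\Mc{B}(X)}$ is inner regular by compact sets, and is therefore a Radon measure.

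This is bookkeeping rather than a genuine difficulty; the only step that is not purely formal is recognizing that the sets $\{g\le a\}$ with $a<0$ appearing in Definition~\ref{defOM} are compact (because $g\in C_c(X)$ and $a<0$), which is exactly what upgrades the metric outer measure of Theorem~\ref{thmMOM} from a mere Borel measure to an inner regular one. Everything else---Carathéodory measurability, finiteness via Lemma~\ref{lemfnadist}, and the extension of inner regularity from open to Borel sets via $\sigma$-compactness of $X$---is standard measure theory using only facts already available.
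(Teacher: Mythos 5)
Your proposal is correct and follows essentially the same route as the paper, which simply invokes Carath\'eodory's criterion for metric outer measures and declares the remaining properties (finiteness, outer regularity from Definition~\ref{defOM}, inner regularity via the compactness of the sets $\{g\le a\}$ for $g\in\core$ and $a<0$) to be obvious. Your write-up just supplies the details the paper omits, and they check out.
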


\begin{proof}
	It is known that a metric outer measure is Borel measurable (this follows from Carath\'eodory's criterion; see, e.g., \cite[Section 2.3.2]{Fed}). The other properties are straightforward.
\end{proof}

The remainder of this section is devoted to verifying \ref{condmeasene} for the above construction and to proving \eqref{eqlimmeas}.

\begin{lem}\label{lemdistabs}
	$F_f^g(a)+\lim_{t\uparrow -a}F_f^{-g}(t)=\Mc{E}(f)$ for any $a\in\Mb{R}$, $f\in\core$, and $g\in\Mc{V}$.
\end{lem}

\begin{proof}
	Let $\epsilon>0$ and choose $n\in\Mb{N}$ such that $2^{-n+1}<\epsilon$. Then
	\begin{gather*}
		T_n\circ f=(f_n^{g,a+\epsilon}+ f_n^{-g,-a-2\epsilon})\vee \left((T_n\circ f)\wedge (S_n^{a+2\epsilon}\circ g)\wedge (S_n^{-a-\epsilon}\circ(-g)) \right),\\
		f_n^{g,a+\epsilon}+ f_n^{-g,-a-2\epsilon}=(T_n\circ f)\wedge \left((S_{n}^{a+\epsilon}\circ g)+ (S_n^{-a-2\epsilon}\circ (-g))\right).
	\end{gather*}
	From \ref{defstrloc}, Corollary \ref{corincl}~(2), Lemma \ref{lemmeascap}, and the above identities, we obtain
	\begin{gather*}
		\lim_{n\to\infty}\Mc{E}(T_n\circ f)\le \left(\left(F_f^g(a+\epsilon)+F_f^{-g}(-a-2\epsilon)\right)^{\frac{1}{p}}+\left(F_f^g(a+2\epsilon)-F_f^g(a)\right)^{\frac{1}{p}}\right)^p,\\
		F_f^g(a+\epsilon)+F_f^{-g}(-a-2\epsilon)\le\lim_{n\to\infty}\Mc{E}(T_n\circ f).
	\end{gather*}
	Since $\Mc{E}(T_n\circ f)=\Mc{E}(f)\ (n\in\Mb{N})$ by Lemma \ref{lemfold}, letting $\epsilon\downarrow 0$ we conclude that
	\[
	F_f^g(a)+\lim_{t\uparrow -a}F_f^{-g}(t)=\Mc{E}(f).
	\]
\end{proof}

\begin{prop}\label{propmeasrep}
	For any $f\in\core$ and $g\in\Mc{V}$,
	\begin{enumerate}
		\item $\mu_{\langle f \rangle}(X)=\Mc{E}(f)$ and $\mu_{\langle f\rangle}(\{f=0\})=0$.
		\item $\mu_{\langle f \rangle}(\{g\le a\})=F^g_f(a)$ for any $a\in\Mb{R}$.
	\end{enumerate}
\end{prop}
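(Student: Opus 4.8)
The plan is to establish the three identities in the order $\mu_{\langle f\rangle}(X)=\Mc{E}(f)$, then part~(2), then $\mu_{\langle f\rangle}(f=0)=0$, since once part~(2) is available the last identity reduces to a short computation. For $\mu_{\langle f\rangle}(X)\le\Mc{E}(f)$ there is nothing to do: every value $F_f^g(a)$ entering the definition of $\mu_{\langle f\rangle}(X)$ is bounded by $\Mc{E}(f)=\lim_{a\to\infty}F_f^g(a)$ by Lemma~\ref{lemfnadist}. For the reverse inequality I would use Urysohn's lemma together with the density in \eqref{defreg} to pick $g\in\core$ with $g<-1$ on some open $U\supseteq\supp[f]$ (approximate $-2\phi$ in sup-norm, where $\phi\in C_c(X)$, $0\le\phi\le1$, $\phi\equiv1$ on a neighborhood of $\supp[f]$). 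On $U$ one has $S_n^{-1}\circ g=2^{-n}\ge T_n\circ f$, hence $f_n^{g,-1}=T_n\circ f$ there, while off $\supp[f]\subseteq U$ both sides vanish; so $f_n^{g,-1}=T_n\circ f$ on all of $X$ and $\Mc{E}(f_n^{g,-1})=\Mc{E}(T_n\circ f)=\Mc{E}(f)$ by Lemma~\ref{lemfold}. This gives $\mu_{\langle f\rangle}(X)\ge F_f^g(-1)=\Mc{E}(f)$.

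For part~(2), the bound $\mu_{\langle f\rangle}(\{g\le a\})\le F_f^g(a)$ holds for every $a\in\Mb{R}$: enlarge $\{g\le a\}$ to the open set $\{g<a+\delta\}$; any pair $(h,b)$ with $h\in\core$, $b<0$, $\{h\le b\}\subseteq\{g<a+\delta\}$ satisfies $\{h\le b\}\subseteq\{g\le a+\delta\}$, so $F_f^h(b)\le F_f^g(a+\delta)$ by Lemma~\ref{lemFcover}; taking the supremum over such $(h,b)$ yields $\mu_{\langle f\rangle}(\{g\le a\})\le\mu_{\langle f\rangle}(\{g<a+\delta\})\le F_f^g(a+\delta)$, and letting $\delta\downarrow0$ the right continuity in Lemma~\ref{lemfnadist} gives the claim. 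For the reverse inequality when $a<0$, the pair $(g,a)$ is itself admissible in Definition~\ref{defOM} for every open $U\supseteq\{g\le a\}$, so $\mu_{\langle f\rangle}(U)\ge F_f^g(a)$, and taking the infimum over $U$ finishes this case. Hence $\mu_{\langle f\rangle}(\{g\le a\})=F_f^g(a)$ for all $g\in\core$ and all $a<0$.

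The remaining case $a\ge0$ is the step I expect to be the main obstacle: when $X$ is non-compact there is no $h\in\core$ with $\{h\le b\}=\{g\le a\}$ for $b<0$ (such an $h$ would have to be $\le b<0$ off a compact set), so the inner-approximation definition cannot detect a possible atom of $\nu_f^g$ at $a$. I would get around this by passing to complements. By part~(1) and Corollary~\ref{Cormeas}, $\mu_{\langle f\rangle}$ is a finite Borel measure with total mass $\Mc{E}(f)$, so $\mu_{\langle f\rangle}(\{g\le a\})=\Mc{E}(f)-\mu_{\langle f\rangle}(\{-g<-a\})$; writing $\{-g<-a\}=\bigcup_{\delta>0}\{-g\le-a-\delta\}$ as an increasing union (note $-a-\delta<0$), the case already proved plus continuity from below give $\mu_{\langle f\rangle}(\{-g<-a\})=\lim_{\delta\downarrow0}F_f^{-g}(-a-\delta)=\nu_f^{-g}((-\infty,-a))$, which equals $\Mc{E}(f)-F_f^g(a)$ by Lemma~\ref{lemdistabs}. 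Therefore $\mu_{\langle f\rangle}(\{g\le a\})=F_f^g(a)$ for every $a$.

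Finally, for $\mu_{\langle f\rangle}(f=0)=0$: since $|f|\in\core$ by Corollary~\ref{corincl}(1) and $\{f=0\}=\{|f|\le0\}$, part~(2) gives $\mu_{\langle f\rangle}(f=0)=F_f^{|f|}(0)=\lim_n\Mc{E}(f_n^{|f|,0})$. Here $f_n^{|f|,0}=T_n\circ f\wedge S_n^0\circ|f|=\varphi_n\circ f$ with $\varphi_n=\tau_n\wedge\sigma_n$ piecewise affine, $\varphi_n(0)=0$, $|\varphi_n'|\le1$, and $\varphi_n\equiv0$ off $(-2^{-n},2^{-n})$, so $|\varphi_n'|\le|(\Mc{C}_{-2^{-n}}^{2^{-n}})'|$ a.e.\ and Corollary~\ref{corfold} yields $\Mc{E}(f_n^{|f|,0})\le\Mc{E}(\Mc{C}_{-2^{-n}}^{2^{-n}}\circ f)$. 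Since $\Mc{C}_{-2^{-n}}^{2^{-n}}\circ f\to0$ in $L^p(\Mf{m})$ and $\Mc{E}(\Mc{C}_{-2^{-m_3}}^{2^{-m_3}}\circ f)\le\Mc{E}\bigl(\tfrac12(\Mc{C}_{-2^{-m_1}}^{2^{-m_1}}\circ f+\Mc{C}_{-2^{-m_2}}^{2^{-m_2}}\circ f)\bigr)$ for $m_1\le m_2\le m_3$ (again Corollary~\ref{corfold}), Lemma~\ref{lemconvconv} gives $\Mc{E}(\Mc{C}_{-2^{-n}}^{2^{-n}}\circ f)\to0$, whence $0\le\mu_{\langle f\rangle}(f=0)\le\lim_n\Mc{E}(\Mc{C}_{-2^{-n}}^{2^{-n}}\circ f)=0$.
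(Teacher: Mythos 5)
Your proof is correct, and while the two central steps --- part (2) for $a<0$ via Lemma \ref{lemFcover} plus right continuity, and the passage to complements via Lemma \ref{lemdistabs} for $a\ge 0$ --- coincide with the paper's, you handle part (1) by a genuinely different route. The paper never proves $\mu_{\langle f\rangle}(X)\ge\Mc{E}(f)$ directly: it first proves (2) for $a<0$, then uses the decomposition $T_n\circ f=(f_n^{f,-a}+f_n^{-f,-a})\vee f_n^{|f|,a}$ together with \eqref{defstrloc}, the triangle inequality and the bound $\Mc{E}(f_n^{|f|,a})\le\Mc{E}(\Mc{C}_{-2a}^{2a}\circ f)$ to get $\Mc{E}(f)\le\mu_{\langle f\rangle}(f\ne 0)$ as $a\downarrow 0$, which yields $\mu_{\langle f\rangle}(X)=\Mc{E}(f)$ and $\mu_{\langle f\rangle}(f=0)=0$ simultaneously. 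You instead exhibit, via Urysohn and \eqref{defreg}, an admissible pair $(g,-1)$ with $g<-1$ on a neighbourhood of $\supp[f]$, for which $f_n^{g,-1}=T_n\circ f$ identically, so the supremum defining $\mu_{\langle f\rangle}(X)$ already attains the value $\Mc{E}(f)$; the identity $\mu_{\langle f\rangle}(f=0)=0$ then drops out of part (2) applied to $g=|f|$, $a=0$, using the same estimate $\Mc{E}(f_n^{|f|,0})\le\Mc{E}(\Mc{C}_{-2^{-n}}^{2^{-n}}\circ f)\to 0$ that the paper needs anyway. Your ordering (first $\mu_{\langle f\rangle}(X)=\Mc{E}(f)$, then (2) for all $a$, then the null set) is free of circularity, and the explicit near-optimal cutoff $g$ buys a shorter argument that dispenses with the three-term decomposition and one application of the triangle inequality; the paper's version, in exchange, packages both assertions of (1) into a single limiting inequality.
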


\begin{proof}
	We may assume $g\in\core$. We first prove (2) for $a<0$.
	The inequality $F_f^g(a)\le \mu_{\langle f \rangle}(\{g\le a\})$ follows directly from the definition of $\mu_{\langle f\rangle}$.
	For any $a'\in(a,0)$,
	\[
	\mu_{\langle f \rangle}(\{g\le a\})\le\mu_{\langle f \rangle}(\{g< a'\})\le F_f^g(a'),
	\]
	by Lemma \ref{lemFcover}. Letting $a'\downarrow a$, we obtain $\mu_{\langle f \rangle}(\{g\le a\})=F^g_f(a)$.\par
	
	Next, we prove (1). Note that $(f_n^{g,-a}+ f_{n}^{-g,-a})\vee f_{n}^{|g|,a}=T_n\circ f$ for any $n,a$ with $2^{-n}<a$.
	Set $g=f$, then $f_n^{|f|,a}=\left(T_n\wedge (S_n^a\circ|\cdot|) \right)\circ f$.
	Hence, $\Mc{E}(f_n^{|f|,a})\le\Mc{E}(\eta_{(-2a,2a)}\circ f)$ for $n$ with $2^{-n}\le a$ by Corollary \ref{corfold}.
	Therefore,
	\begin{align*}
		\Mc{E}(f)
		&\le\liminf_{n\to\infty}\left(\left(\Mc{E}(f_n^{f,-a})+\Mc{E}(f_n^{-f,-a})\right)^{\frac{1}{p}}+\Mc{E}(f_n^{|f|,a})^{\frac{1}{p}} \right)^p\\
		&\le\left(\left(\mu_{\langle f\rangle}(\{f\le-a\})+\mu_{\langle f\rangle}(\{-f\le-a\})\right)^{\frac{1}{p}}
		+\Mc{E}(\eta_{(-2a,2a)}\circ f)^{\frac{1}{p}}\right)^p.
	\end{align*}
	Since $\Mc{E}(\eta_{(-2a_3,2a_3)}\circ f)\le \Mc{E}\left(\left(\frac{\eta_{(-2a_1,2a_1)}+\eta_{(-2a_2,2a_2)} }{2}\right)\circ f \right)$ for $0<a_3<a_2<a_1$ by Corollary \ref{corfold},
	and since $|\eta_{(-2a,2a)}\circ f|\le |f|$ and $\eta_{(-2a,2a)}\circ f\to0$ on $X$ as $a\downarrow0$,
	the dominated convergence theorem and Lemma \ref{lemconvconv} imply $\lim_{a\downarrow 0}\Mc{E}(\eta_{(-2a,2a)}\circ f)=0$.
	Hence,
	\[
	\Mc{E}(f)\le \mu_{\langle f\rangle}(\{f\ne 0\})=\lim_{a\downarrow 0}\mu_{\langle f\rangle}(\{|f|\ge a\}),
	\]
	which proves (1), because $\mu_{\langle f\rangle}(X)\le\Mc{E}(f)$ by definition.\par
	
	We now prove (2) for $a\ge0$.
	By (1), (2) for $a<0$, and Lemma \ref{lemdistabs}, we obtain
	\[
	\mu_{\langle f \rangle}(\{g\le a\})
	=\Mc{E}(f)-\lim_{t\uparrow -a}\mu_{\langle f \rangle}(\{-g\le t\})
	=\Mc{E}(f)-\lim_{t\uparrow -a}F_f^{-g}(t)
	=F^g_f(a).
	\]
\end{proof}

\section{\texorpdfstring{Proof of Theorem \ref{thmunique} (Part III): triangle inequality and $p$-Clarkson inequalities for energy measures}{Triangle inequality and p-Clarkson inequalities for energy measures}}\label{secineqs}
In this section, we prove \ref{condmeasloc}--\ref{condmeasclk} and define $\mu_{\langle f\rangle}$ for any $f\in\Mc{F}$.
In particular, we establish the triangle inequality and the $p$-Clarkson inequalities.
The main obstacle is that $T_n\circ(f+g)\ne T_n\circ f+T_n\circ g$.
To overcome this issue, we use a purely arithmetic decomposition of the dyadic truncations.
For technical reasons, we first prove \ref{condmeasclk} on $\core$, excluding the inequality \eqref{eqCLK1}.

\begin{lem}\label{lemforclk1}
	For any $f\in\core$ and $a\in\Mb{R}$,
	\begin{enumerate}
		\item $\mu_{\langle af\rangle}=|a|^p\,\mu_{\langle f\rangle}$.
		\item $\mu_{\langle |f-a|-|a|\rangle}=\mu_{\langle f\rangle}$.
	\end{enumerate}
\end{lem}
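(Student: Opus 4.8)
Both statements assert an equality of the form $\mu_{\langle \psi\circ f\rangle}=c_\psi\,\mu_{\langle f\rangle}$ for a continuous, piecewise-affine $\psi$ with $\psi(0)=0$: $\psi(t)=at$ with $c_\psi=|a|^p$ in~(1), and $\psi(t)=|t-a|-|a|$ with $c_\psi=1$ in~(2). The plan is to reduce everything to the one-dimensional distribution functions $F^g_h$. Since by Definition~\ref{defOM} two Radon measures on $(X,d)$ coincide as soon as they agree on every open set, and since $\mu_{\langle h\rangle}(U)=\sup\{F^g_h(b):g\in\core,\ b<0,\ \{g\le b\}\subset U\}$ with $F^g_h(b)=\mu_{\langle h\rangle}(g\le b)$ by Proposition~\ref{propmeasrep}(2), it suffices to prove, for all $g\in\core$ and $b<0$,
\[
F^g_{af}(b)=|a|^p\,F^g_f(b)\quad\text{in case (1)},\qquad F^g_{|f-a|-|a|}(b)=F^g_f(b)\quad\text{in case (2)}.
\]
Because $T_n$ is even, $\mu_{\langle-h\rangle}=\mu_{\langle h\rangle}$, so in (1) we may take $a>0$; the case $a=0$ is trivial since $\Mc E(0)=0$.

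\emph{Case (1).} The triangle wave rescales as $T_n(at)=a\,\tau_n(t)$ with $\tau_n(t):=\min_{k\in\Mb Z}|t-a^{-1}2^{-(n-1)}k|$, and a direct computation gives $a^{-1}S^b_n\circ g=\widetilde S^{\,b/a}_n\circ(g/a)$, where $\widetilde S^{\,c}_n(s):=\bigl((-s+c+a^{-1}2^{-n})\wedge a^{-1}2^{-n}\bigr)^+$. Hence
\[
T_n\circ(af)\wedge S^b_n\circ g=a\bigl(\tau_n\circ f\wedge\widetilde S^{\,b/a}_n\circ(g/a)\bigr),
\]
so $\Mc E(T_n\circ(af)\wedge S^b_n\circ g)=a^p\,\Mc E(\tau_n\circ f\wedge\widetilde S^{\,b/a}_n\circ(g/a))$ by the $p$-homogeneity of $\Mc E$. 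The right-hand side is exactly the $n$-th approximant of the construction of Section~\ref{secconst} carried out with the geometric scale $\{a^{-1}2^{-(n-1)}\}_{n\ge1}$ (ratio $\tfrac12$) in place of $\{2^{-(n-1)}\}_{n\ge1}$, applied to $f$ with test function $g/a$ and level $b/a$; since the only property of the scale used throughout Section~\ref{secconst} is that it is geometric with ratio $\tfrac12$, Lemmas~\ref{lemfnaconv}--\ref{lemFcover} and Proposition~\ref{propmeasrep} carry over verbatim and produce a Radon measure $\mu^{(a)}_{\langle f\rangle}$ with $\mu^{(a)}_{\langle f\rangle}(g/a\le b/a)=\lim_n\Mc E(\tau_n\circ f\wedge\widetilde S^{\,b/a}_n\circ(g/a))$. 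Granting the scale-independence $\mu^{(a)}_{\langle f\rangle}=\mu_{\langle f\rangle}$ and using $\{g/a\le b/a\}=\{g\le b\}$ (valid as $a>0$), one gets $F^g_{af}(b)=a^p\,\mu^{(a)}_{\langle f\rangle}(g/a\le b/a)=a^p\,\mu_{\langle f\rangle}(g\le b)=a^p\,F^g_f(b)$, which is~(1). The scale-independence is the heart of the matter; what is really needed is the following robustness of the construction: for any sequence $\theta_n$ of triangle waves whose wavelengths tend to $0$, and cut-offs $\Sigma_n$ of height one half of the wavelength of $\theta_n$ and with widths (in the test variable) tending to $0$, one has $\lim_n\Mc E(\theta_n\circ f\wedge\Sigma_n\circ g')=\mu_{\langle f\rangle}(g'\le c)$ for all $g'\in\core$, $c<0$. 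When $a$ is an integer power of $2$ (resp.\ all data are dyadic) this is immediate, the scales coinciding after an index shift; in general it is obtained by interleaving the scale of $\theta_n$ with $\{2^{-(n-1)}\}_n$ and re-running the arguments of Lemmas~\ref{lemfnaconv}--\ref{lemdistabs}, the discrepancies between approximants at adjacent scales being absorbed into auxiliary triangle-wave and cut-off terms that tend to $0$ in $(\Mc F,\Mc E_1)$ (by Lemma~\ref{lemconvconv} and dominated convergence, as in Lemma~\ref{lemfnaconv}) together with ``band'' masses $\nu^{g'}_f\bigl((c,c+\delta]\bigr)$ that vanish as $\delta\downarrow0$ by the finiteness of $\nu^{g'}_f$ (Lemma~\ref{lemfnadist}). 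Applying the robustness with $\theta_n=\tau_n$, $\Sigma_n=\widetilde S^{\,b/a}_n$, $g'=g/a$, $c=b/a$ gives the scale-independence.

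\emph{Case (2).} Write $u:=|f-a|-|a|$; by the symmetry above we may assume $a\ge0$. First, $\Mc E(u)=\Mc E(f)$: $\varphi(t)=|t-a|-|a|$ is continuous and piecewise affine with $\varphi(0)=0$ and every affine piece of slope $\pm1$, so Lemma~\ref{lemfold}, applied with a breakpoint set containing $0$ and $a$, gives $\Mc E(\varphi\circ f)=\sum_i\LIP(\varphi|_{[c_{i-1},c_i]})^p\,\Mc E(\Mc C_{c_{i-1}}^{c_i}\circ f)=\sum_i\Mc E(\Mc C_{c_{i-1}}^{c_i}\circ f)=\Mc E(f)$, i.e.\ $\mu_{\langle u\rangle}(X)=\mu_{\langle f\rangle}(X)$. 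Next I would record a \emph{locality on open sets}: if $h_1,h_2\in\core$ agree on an open set $W$, then $\mu_{\langle h_1\rangle}|_W=\mu_{\langle h_2\rangle}|_W$; this is read off Definition~\ref{defOM}, since in the supremum defining $\mu_{\langle h\rangle}(W)$ one may restrict to pairs $(g,b)$ for which $\{g\le b'\}\subset W$ for some $b'>b$ (by the decreasing-compact-sets argument of Theorem~\ref{thmMOM}), and then $\{g<b+2^{-n}\}\subset W$ once $2^{-n}\le b'-b$, so that $T_n\circ h\wedge S^b_n\circ g$ depends only on $h|_W$. Since $u=-f$ on the open set $\{f<a\}$ and $\mu_{\langle-f\rangle}=\mu_{\langle f\rangle}$, this yields $\mu_{\langle u\rangle}|_{\{f<a\}}=\mu_{\langle f\rangle}|_{\{f<a\}}$. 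On the open set $\{f>a\}$ we have $u=f-2a$, so for $(g,b)$ with $\{g\le b\}\subset\{f>a\}$ as above, $T_n\circ u\wedge S^b_n\circ g=T_n(\cdot-2a)\circ f\wedge S^b_n\circ g$; since $\theta_n:=T_n(\cdot-2a)$ is a triangle wave of wavelength $2^{-(n-1)}\to0$ and $\Sigma_n:=S^b_n$ has height $2^{-n}$ and width $2^{-n}\to0$, the robustness of Case~(1) applies and gives $F^g_u(b)=\lim_n\Mc E(T_n(\cdot-2a)\circ f\wedge S^b_n\circ g)=\mu_{\langle f\rangle}(g\le b)=F^g_f(b)$, hence $\mu_{\langle u\rangle}|_{\{f>a\}}=\mu_{\langle f\rangle}|_{\{f>a\}}$. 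It remains to match $\mu_{\langle u\rangle}$ and $\mu_{\langle f\rangle}$ on the closed set $\{f=a\}$; on its interior both vanish (since $u$ and $f$ are constant there, and $\mu_{\langle h\rangle}$ vanishes on every open set on which $h$ is constant --- the localized form of the proof that $\mu_{\langle f\rangle}(f=0)=0$ in Proposition~\ref{propmeasrep}(1), using $\Mc E(\Mc C$-truncations$)\to0$ and $\Mc E(\deco{S^b_n\circ g})\to0$), and the comparison of total masses over $\{f=a\}$ follows from $\mu_{\langle u\rangle}(X)=\mu_{\langle f\rangle}(X)$ and the two equalities on $\{f<a\}$, $\{f>a\}$; promoting this to equality of $\mu_{\langle u\rangle}$ and $\mu_{\langle f\rangle}$ on all Borel subsets of $\{f=a\}$ is the second genuinely delicate step, to be dealt with by a separate argument (e.g.\ by exhibiting enough open sets on which the two measures provably coincide, via the decomposition of $f$ near its level $a$ into pieces with disjoint supports as in Lemma~\ref{lemFcover}). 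In summary, the two real obstacles are the scale/phase robustness of the construction (used in both cases) and the boundary matching on $\{f=a\}$ in~(2); everything else is bookkeeping built on Section~\ref{secconst}.
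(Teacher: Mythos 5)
Your overall strategy for (1) --- rescale $T_n\circ(af)=a\,\tau_n\circ f$, use $p$-homogeneity of $\Mc{E}$, and then argue that running the construction of Section \ref{secconst} with the incommensurable scale $\{a^{-1}2^{-(n-1)}\}_n$ produces the same measure --- is morally the same idea as the paper's, but the step you yourself call ``the heart of the matter'' (scale/phase independence) is exactly where all the work lies, and your sketch does not carry it out. Saying that the discrepancies between approximants at adjacent scales are ``absorbed into auxiliary triangle-wave and cut-off terms that tend to $0$'' is a statement of what must be proved, not a proof: one has to exhibit those correction terms and show their energies vanish. The paper does precisely this, by constructing explicit functions $u_{n,m}=\int_0^{f(\cdot)}\indi_{I_{n,m}\cup(-I_{n,m})}\,\der t$ supported near the kinks of $T_n\circ(af)$, verifying the exact identity $T_m\circ\bigl(T_n\circ(af)+2au_{n,m}\bigr)=aT_m\circ f$, and proving $\Mc{E}(u_{n,m})\to0$ by comparison (Corollary \ref{corfold}) with a dominating $v_{n,m}$ to which Lemma \ref{lemconvconv} and dominated convergence apply. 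Without an analogue of this construction your ``robustness'' lemma is unsubstantiated, so part (1) has a genuine gap.

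For (2) the gap is more serious, and you also miss a decisive simplification. Once (1) is available one may normalize to $a\in\{0,1\}$ (for $a>0$ write $|f-a|-a=a\bigl(|f/a-1|-1\bigr)$, and use $\mu_{\langle -h\rangle}=\mu_{\langle h\rangle}$ for $a<0$). Then, since $T_n$ is even and $2^{-(n-1)}$-periodic and $2^{-(n-1)}$ divides $1$, one has $T_n\circ|f|=T_n\circ\bigl(|f-1|-1\bigr)=T_n\circ f$ \emph{identically}, so the approximants $(|f-a|-|a|)_n^{g,b}$ and $f_n^{g,b}$ coincide and the two measures are equal by Definition \ref{defOM} --- no locality argument and no boundary analysis are needed. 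Your route instead splits $X$ into $\{f<a\}$, $\{f>a\}$ and $\{f=a\}$; besides relying again on the unproved phase-robustness on $\{f>a\}$, it leaves unresolved the equality of the two measures on arbitrary Borel subsets of $\{f=a\}$ (your total-mass bookkeeping only gives $\mu_{\langle u\rangle}(\{f=a\})=\mu_{\langle f\rangle}(\{f=a\})$, not set-wise equality there). You flag this yourself as ``to be dealt with by a separate argument,'' so as written the proof of (2) is incomplete. Note also that the locality statement (\ref{condmeasloc}-type comparison on level sets) is only established in the paper \emph{after} this lemma, in Proposition \ref{propwloc}, whose proof uses Theorem \ref{thmclk}, which in turn uses Lemma \ref{lemforclk1}; so an argument for (2) that leans on locality across $\{f=a\}$ risks circularity unless you prove the needed open-set locality from Definition \ref{defOM} directly, as you begin to do but do not finish.
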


\begin{proof}
	\noindent(1)
	Since $T_n\circ f=T_n\circ(-f)$, we have $\mu_{\langle f\rangle}=\mu_{\langle -f\rangle}$; hence we may assume $a>0$.
	Fix $b,b'\in\Mb{R}$ with $b<b'$ and $g\in\Mc{V}$.
	Choose $m>n$ so that $2^{-m}<b'-b$ and $2^{m}a\ge 2^{n+1}$. 
	We define
	\begin{align*}
		I_{n,m}&=
		\left(\Cup_{i=1}^{2^{n-1}N}\left( \frac{2i-1}{2^n},\left\lceil \frac{2i-1}{2^n}\right\rceil_{\frac{1}{2^m a}} \right)\right)
		\Cup
		\left(\Cup_{i=1}^{2^{n-1}N}\left(\left\lfloor \frac{2i}{2^n}\right\rfloor_{\frac{1}{2^ma}},\frac{2i}{2^n}\right)\right),\\
		J_{n,m}&=
		\left(\Cup_{i=1}^{2^{n-1}N}\left( \frac{2i-1}{2^n},\frac{2i-1}{2^n}+\frac{1}{2^m a} \right)\right)
		\Cup
		\left(\Cup_{i=1}^{2^{n-1}N}\left( \frac{2i}{2^n }-\frac{1}{2^m a},\frac{2i}{2^n}\right)\right),\\
		u_{n,m}(x)&=2\int_0^{f(x)}\indi_{I_{n,m}\cup(-I_{n,m})}(t)\,dt,\quad
		v_{n,m}(x)=2\int_0^{f(x)}\indi_{J_{n,m}\cup(-J_{n,m})}(t)\,dt,
	\end{align*}
	where
	\[
	\lfloor x\rfloor_\alpha=\max\{\alpha i\mid i\in\Mb{Z},\ \alpha i\le x \},
	\qquad
	\lceil x \rceil_\alpha=\min\{\alpha i\mid i\in\Mb{Z},\ \alpha i\ge x\},
	\]
	and $N=\left\lceil \|f\|_\infty\right\rceil_1$. Then, by construction of $u_{n,m}$, we have $T_m\circ(af)=T_m\bigl(a(T_n\circ f)+u_{n,m}\bigr)$.
	Moreover, since $f_n^{g,b'}=T_n\circ f$ on $\{g\le b+2^{-m}\}$ (because $b+2^{-m}<b'$), we have
	\[
	(af)_m^{g,b}=\bigl(T_m\circ(a f_n^{g,b'}+u_{n,m})\bigr)\wedge(S_m^b\circ g).
	\]
	
	By Corollary \ref{corfold}, the sequence $\{v_{n,m}\}_{m>n}$ satisfies
	\[
	\Mc{E}(v_{n,m_3})\le\Mc{E}\Bigl(\frac{v_{n,m_1}+v_{n,m_2}}{2}\Bigr)\qquad(m_3\ge m_2\ge m_1>n).
	\]
	Since $|v_{n,m}|\le |f|$ and $\|v_{n,m}\|_\infty\le 2^{n+1-m}N\to 0$ as $m\to\infty$, the dominated convergence theorem and Lemma \ref{lemconvconv} yield
	$
	\lim_{m\to\infty}\Mc{E}(v_{n,m})=0$.
	Hence $\lim_{m\to\infty}\Mc{E}(u_{n,m})=0$ by Corollary \ref{corfold}.
	Therefore,
	\[
	F_{af}^g(b)
	\le \lim_{m\to\infty}\left(a\,\Mc{E}(f_n^{g,b'})^{1/p}+\Mc{E}(u_{n,m})^{1/p}+\Mc{E}^\Mc{V}(S_m^b\circ g)^{1/p}\right)^p
	= a^p\,\Mc{E}(f_n^{g,b'}).
	\]
	Letting $n\to\infty$ and then $b'\downarrow b$, we obtain
	$F_{af}^g(b)\le a^p F_f^g(b)$ by the right continuity of $F_f^g$.
	Similarly, $F_f^g(b)\le a^{-p}F_{af}^g(b)$.
	Since $g$ and $b$ are arbitrary, we conclude that $\mu_{\langle af\rangle}=a^p\mu_{\langle f\rangle}$.
	
	\noindent(2)
	By (1), it suffices to consider $a=0$ and $a=1$.
	In either case, we have $T_n\circ(|f-a|-|a|)=T_n\circ f$ for every $n$, and the claim follows.
\end{proof}

\begin{thm}\label{thmclk}
	For any $A\in\Mc{B}(X)$, the $p$-Clarkson inequalities for $f\mapsto\mu_{\langle f\rangle}(A)^{1/p}$ hold on $\core$.
\end{thm}

\begin{proof}[Proof of Theorem \ref{thmclk} excluding \eqref{eqCLK1}]
	We first prove \eqref{eqCLK3} in the case $A=\{h\le a\}$ for some $h\in\core$ and $a<0$.
	Fix $b>a$.
	Let $n\in\Mb{N}$ satisfy $2^{-n}<b-a$, and choose $\epsilon\in(0,2^{-(n+1)}\wedge(-a))$.
	For $k=0,1$, define
	\begin{gather*}
		u_{n,\epsilon,k}:=\sum_{\substack{i,j\in\Mb{Z}:\\ i+j\equiv k\ (\mathrm{mod}\ 2)}}\left(\frac{1}{2^{n+1}}-\left|f-\frac{i}{2^n}-\frac{1}{2^{n+1}}-\epsilon\right|\vee\left|g-\frac{j}{2^n}-\frac{1}{2^{n+1}}-\epsilon\right|\right)^+,\\
		v_{n,\epsilon,k}:=\left(\left(h-a-\epsilon\right)\vee (-u_{n,\epsilon,k}) \right)\wedge 0.
	\end{gather*}
	Note that
	\[
	\left\{u_{n,\epsilon,k}\ge \epsilon \right\}
	=\Cup_{\substack{i,j\in\Mb{Z}:\\ i+j\equiv k\ (\mathrm{mod}\ 2)}}\left\{\frac{i}{2^n}+2\epsilon\le f\le\frac{i+1}{2^n},\ \frac{j}{2^n}+2\epsilon\le g\le \frac{j+1}{2^n}\right\}.
	\]
	Moreover, $v_{n,\epsilon,k}\in\core$ by Corollary \ref{corV}.
	For $m>n$ with $2^{-m}<\epsilon$, define
	\[
	D_{n,m}(t)=2\sum_{i\in\Mb{Z}}(-1)^{i+1}\left(\left(\frac{1}{2^m}-\left|t-\frac{i}{2^n}-\frac{1}{2^m}\right| \right)^+\right).
	\]
	Then $D_{n,m}\circ f,D_{n,m}\circ g\in\core$ by Corollary \ref{corincl}~(3), and
	$
	\lim_{m\to\infty}\Mc{E}(D_{n,m}\circ f)=\lim_{m\to\infty}\Mc{E}(D_{n,m}\circ g)=0$
	by the dominated convergence theorem, Lemma \ref{lemconvconv}, and Corollary \ref{corfold}.
	
	For $k=0,1$, we have
	\[
	T_m\circ\left(T_n\circ f+T_n\circ g+D_{n,m}\circ f+D_{n,m}\circ g\right)=T_m\circ (f+(-1)^k g)
	\]
	on $\{u_{n,\epsilon,k}\ge\epsilon-2^{-m}\}$.
	Also, $T_n\circ f=f_n^{h,b}$ and $T_n\circ g=g_n^{h,b}$ on $\{h\le b\}$, and
	\[
	\{v_{n,\epsilon,k}\le -\epsilon+2^{-m}\}=\{h\le a+2^{-m} \}\cap\{u_{n,\epsilon,k}\ge \epsilon-2^{-m}\}.
	\]
	Hence,
	\begin{align*}
		&\left(f+g\right)_{m}^{(v_{n,\epsilon,0}),-\epsilon}+\left(f-g\right)_{m}^{(v_{n,\epsilon,1}),-\epsilon}\\
		=&T_m\circ \left((f_{n}^{h,b}+g_{n}^{h,b})+D_{n,m}\circ f+D_{n,m}\circ g \right)
		\wedge \left(\sum_{k=0,1} S_{m}^{-\epsilon}\circ v_{n,\epsilon,k}\right),
	\end{align*}
	because the sets $\{v_{n,\epsilon,k}\le -\epsilon+2^{-m}\}$ are disjoint for $k$.
	Therefore,
	\begin{align*}
		&\mu_{\langle f+g \rangle}\left(\{h\le a,\ u_{n,\epsilon,0}\ge\epsilon\}\right)
		+\mu_{\langle f-g \rangle}\left(\{h\le a,\ u_{n,\epsilon,1}\ge\epsilon\}\right)\\
		=&\lim_{m\to\infty}\left(\Mc{E}\left( (f+g)_{m}^{(v_{n,\epsilon,0}),-\epsilon}\right)+\Mc{E}\left((f-g)_m^{(v_{n,\epsilon,1}),-\epsilon} \right)\right)\\
		=&\lim_{m\to\infty}\Mc{E}\left((f+g)_m^{(v_{n,\epsilon,0}),-\epsilon}+(f-g)_m^{(v_{n,\epsilon,1}),-\epsilon} \right)\\
		\le&
		\lim_{m\to\infty}\left(\Mc{E}(f_{n}^{h,b}+g_{n}^{h,b})^{\frac{1}{p}}
		+\!\!\!\sum_{\tau=f,g}\Mc{E}(D_{n,m}\circ \tau)^{\frac{1}{p}}
		+\sum_{k=0,1} \Mc{E}^\Mc{V}\left(\deco{S_{m}^{-\epsilon}\circ v_{n,\epsilon,k}}\right)^{\frac{1}{p}} \right)^p\\
		=&\Mc{E}(f_n^{h,b}+g_n^{h,b}),
	\end{align*}
	by \ref{defstrloc}, Corollary \ref{corfold}, and Corollary \ref{corincl}~(2), and since
	$\{v_{n,\epsilon,k}\le-\epsilon\}=\{h\le a\}\cap \{u_{n,\epsilon,k}\ge\epsilon\}$.
	Letting $\epsilon\downarrow 0$, we obtain
	$
	\mu_{\langle f+g \rangle}(A_{n,a,0})+\mu_{\langle f-g \rangle}(A_{n,a,1})
	\le \Mc{E}(f_{n}^{h,b}+g_{n}^{h,b})$,
	where
	\begin{equation}\label{defanak}
		A_{n,a,k}=\Cup_{\substack{i,j\in\Mb{Z}:\\ i+j\equiv k\ (\mathrm{mod}\ 2)}} \left\{\frac{i}{2^{n}}<f\le\frac{i+1}{2^{n}},\ \frac{j}{2^{n}}<g\le\frac{j+1}{2^{n}},\ h\le a\right\}
	\end{equation}
	for $k=0,1$.
	Similarly,
	$
	\mu_{\langle f-g \rangle}(A_{n,a,0})+\mu_{\langle f+g \rangle}(A_{n,a,1})
	\le \Mc{E}(f_n^{h,b}-g_{n}^{h,b})$.
	Combining these inequalities with \ref{defclk} yields
	\begin{align}
		&\mu_{\langle f+g \rangle}(\{h\le a\})+\mu_{\langle f-g \rangle}(\{h\le a\}) \notag \\
		=&\left(\mu_{\langle f+g \rangle}(A_{n,a,0})+\mu_{\langle f-g \rangle}(A_{n,a,1})\right)
		+\left(\mu_{\langle f-g \rangle}(A_{n,a,0})+\mu_{\langle f+g \rangle}(A_{n,a,1})\right)\label{eqAchilles}\\
		\le&\Mc{E}(f_n^{h,b}+g_n^{h,b})+\Mc{E}(f_n^{h,b}-g_n^{h,b})\notag \\
		\le& 2\left(\Mc{E}(f_n^{h,b})^{\frac{1}{p-1}}+\Mc{E}(g_n^{h,b})^{\frac{1}{p-1}}\right)^{p-1}.\notag
	\end{align}
	Letting $n\to\infty$ and then $b\downarrow a$, we conclude that \eqref{eqCLK3} holds in this case.
	The general case $A\in\Mc{B}(X)$ follows from Lemma \ref{lemapproxopen} and the outer regularity of Radon measures.\par
	The proofs of \eqref{eqCLK2} and \eqref{eqCLK4} are analogous.
	Indeed, \eqref{eqCLK4} is equivalent to
	$
	\mu_{\langle f+g\rangle}(A)+\mu_{\langle f-g\rangle}(A)
	\le2^{p-1}\left(\mu_{\langle f\rangle}(A)+\mu_{\langle g\rangle}(A) \right)$
	by Lemma \ref{lemforclk1}~(1).
\end{proof}

\begin{rem}
	The above argument does \emph{not} apply to \eqref{eqCLK1}, because the identity that would be needed in place of \eqref{eqAchilles} fails.
\end{rem}

Next we prove \eqref{eqCLK1} by establishing \ref{condmeasloc}.

\begin{lem}\label{lemforwloc}
	There exists $c_p>0$ depending only on $p$ such that
	\[
	\mu_{\langle f\vee(g-a)\rangle}(A)\vee\mu_{\langle f\wedge(g+a)\rangle}(A)
	\le c_p\left(\mu_{\langle f \rangle}(A)+\mu_{\langle g \rangle}(A)\right)
	\]
	for any $A\in\Mc{B}(X)$, $f,g\in\core$, and $a\ge0$.
\end{lem}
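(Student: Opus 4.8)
The plan is to transport, to the level of energy measures, the purely algebraic manipulation behind the $\Mc{E}$-level bound in Corollary~\ref{corincl}~(2). Since $a\ge 0$, we have the identity
\[
f\vee(g-a)=\frac{f+g}{2}+w,\qquad w:=\frac{|(f-g)+a|}{2}-\frac{a}{2}=\frac12\bigl(|(f-g)+a|-|a|\bigr),
\]
in which $\tfrac{f+g}{2},\,w\in\core$ (and $f\vee(g-a)\in\core$ by Corollary~\ref{corincl}~(2), so $w=f\vee(g-a)-\tfrac{f+g}{2}\in\core$). I would then estimate $\mu_{\langle f\vee(g-a)\rangle}(A)$ by splitting it across this sum and re-splitting $f\pm g$ across $f$ and $\pm g$. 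The point to be careful about is that neither the triangle inequality for $\mu_{\langle\cdot\rangle}(A)^{1/p}$ nor \eqref{eqCLK1} is available yet — the present lemma is in fact one of the tools used to establish those afterwards — so the combining step must rely only on what has already been proved.

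What has been proved suffices. Applying \eqref{eqCLK2} (for $1<p\le 2$) or \eqref{eqCLK3} (for $p\ge 2$) to $F:=\mu_{\langle\cdot\rangle}(A)^{1/p}$ on $\core$ — legitimate by the portion of Theorem~\ref{thmclk} already established (``excluding \eqref{eqCLK1}'') — together with the elementary superadditivity $s^r+t^r\le(s+t)^r$ valid for $r\ge 1$, one obtains the quasi-triangle inequality
\[
\mu_{\langle u+v\rangle}(A)^{1/p}\le 2^{1/p}\left(\mu_{\langle u\rangle}(A)^{1/p}+\mu_{\langle v\rangle}(A)^{1/p}\right)\qquad(u,v\in\core),
\]
with the same constant in both ranges of $p$. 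Next, by Lemma~\ref{lemforclk1}~(2) (applied with ``$f$'' $=f-g$ and ``$a$'' $=-a$) followed by Lemma~\ref{lemforclk1}~(1) one has $\mu_{\langle w\rangle}=2^{-p}\mu_{\langle f-g\rangle}$, and $\mu_{\langle (f+g)/2\rangle}=2^{-p}\mu_{\langle f+g\rangle}$ again by Lemma~\ref{lemforclk1}~(1).

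Putting these together, the quasi-triangle inequality applied to $f\vee(g-a)=\tfrac{f+g}{2}+w$ gives
\[
\mu_{\langle f\vee(g-a)\rangle}(A)^{1/p}\le 2^{1/p}\cdot\tfrac12\left(\mu_{\langle f+g\rangle}(A)^{1/p}+\mu_{\langle f-g\rangle}(A)^{1/p}\right),
\]
and applying it once more to $f+g$ and to $f-g$ (using $\mu_{\langle -g\rangle}=\mu_{\langle g\rangle}$, a special case of Lemma~\ref{lemforclk1}~(1)) yields $\mu_{\langle f\vee(g-a)\rangle}(A)^{1/p}\le 2^{2/p}\bigl(\mu_{\langle f\rangle}(A)^{1/p}+\mu_{\langle g\rangle}(A)^{1/p}\bigr)$. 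Raising to the $p$-th power and using $(s+t)^p\le 2^{p-1}(s^p+t^p)$ gives the estimate for $f\vee(g-a)$ with, say, $c_p=2^{p+1}$. Finally, $f\wedge(g+a)=-\bigl((-f)\vee(-g-a)\bigr)$, so applying what has just been shown to $-f$ and $-g$ and invoking $\mu_{\langle -h\rangle}=\mu_{\langle h\rangle}$ once more completes the proof. I expect the main obstacle to be conceptual rather than computational: recognizing that a non-sharp constant $c_p$ is all that is required here, so that the crude quasi-triangle inequality extracted from \eqref{eqCLK2}/\eqref{eqCLK3} can stand in for the genuine triangle inequality that is not yet available.
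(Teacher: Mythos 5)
Your proof is correct and follows essentially the same route as the paper's: the identity $f\vee(g-a)=\frac{f+g}{2}+\bigl(\frac{|f-g+a|}{2}-\frac{a}{2}\bigr)$, reduction via Lemma \ref{lemforclk1}, and a quasi-triangle inequality extracted from the already-proved Clarkson inequalities for $\mu_{\langle\cdot\rangle}(A)^{1/p}$. The only (immaterial) difference is that the paper bounds $\mu_{\langle f+g\rangle}(A)+\mu_{\langle f-g\rangle}(A)$ jointly by one application of \eqref{eqCLK2}/\eqref{eqCLK4}, obtaining the sharper constant $2^{|p-2|}$, whereas your term-by-term use of the root-level quasi-triangle inequality yields $c_p=2^{p+1}$, which suffices since the lemma only asserts existence of some $c_p>0$.
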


\begin{proof}
	Since $f\vee(g-a)=\frac{f+g}{2}+\left(\frac{|f-g+a|}{2}-\frac{a}{2}\right)$, we have
	\begin{align*}
		\mu_{\langle f\vee(g-a)\rangle}(A)
		\le&\,2^{(p-1)\vee1}\left(\mu_{\left\langle \frac{f+g}{2} \right\rangle}(A)+\mu_{\left\langle \frac{|f-g+a|}{2}-\frac{a}{2} \right\rangle}(A)\right)\\
		=&\,2^{((p-1)\vee1)-p}\left( \mu_{\langle f+g \rangle}(A)+\mu_{\langle f-g \rangle}(A)\right)\\
		\le&\,2^{|p-2|}\left( \mu_{\langle f \rangle}(A)+\mu_{\langle g \rangle}(A)\right),
	\end{align*}
	by Lemma \ref{lemforclk1}, \eqref{eqCLK2}, and \eqref{eqCLK4}.
	The estimate for $\mu_{\langle f\wedge(g+a)\rangle}(A)$ is proved in the same way.
\end{proof}

\begin{prop}\label{propwloc}
	\ref{condmeasloc} holds. In particular, $\mu_{\langle f \rangle}(A)=0$ if $f|_{A}$ is constant.
\end{prop}

\begin{proof}
	We first consider the case $A=\{h\le a\}$ for some $h\in\Mc{V}$ and $a\in\Mb{R}$.
	By Lemma \ref{lemforclk1}~(1), we may assume that $(f-g)|_{A}=0$ or $(f-g)|_{A}=1$.
	In both cases, $(T_n\circ f-T_n\circ g)|_A=0$.
	Therefore, by Proposition \ref{propmeasrep}~(1),
	\[
	\Mc{E}(f_n^{h,a}-g_n^{h,a})
	=\mu_{\left\langle f_n^{h,a}-g_n^{h,a} \right\rangle}\bigl(\{a<h\le  a+2^{-n}\}\bigr).
	\]
	Since $\mu_{\langle f\rangle}=\mu_{\langle T_n\circ f\rangle}$ by definition, we have
	\begin{align*}
		&\mu_{\left\langle f_n^{h,a}-g_n^{h,a} \right\rangle}\bigl(\{a<h\le  a+2^{-n}\}\bigr)\\
		\le&\,2^{(p-1)\vee1}\left(\mu_{\left\langle f_n^{h,a} \right\rangle}\bigl(\{a<h\le  a+2^{-n}\}\bigr)+\mu_{\left\langle g_n^{h,a} \right\rangle}\bigl(\{a<h\le  a+2^{-n}\}\bigr)\right)\\
		\le&\,\begin{multlined}[t][.9\linewidth]
			c_p2^{(p-1)\vee 1}\Bigl(
			\mu_{\left\langle f \right\rangle}\bigl(\{a<h\le a+2^{-n}\}\bigr)
			+\mu_{\left\langle g \right\rangle}\bigl(\{a<h\le a+2^{-n}\}\bigr)\\
			+2\,\mu_{\bigl\langle\deco{\eta_{(-a-2^{-n},-a)}\circ (-h)}\bigr\rangle}\bigl(\{a<h\le a+2^{-n}\}\bigr)
			\Bigr),
		\end{multlined}
	\end{align*}
	by \eqref{eqCLK2}, \eqref{eqCLK4}, and Lemma \ref{lemforwloc}.
	Consequently,
	\begin{gather*}
		\lim_{n\to\infty}\Mc{E}(f_n^{h,a}-g_n^{h,a})
		\le c_p2^{(p-1)\vee1}\left(2\lim_{n\to\infty}\Mc{E}^\Mc{V}\bigl(\deco{S_{n}^a\circ h}\bigr)\right)=0,
		\shortintertext{and hence}
		\mu_{\left\langle f \right\rangle}(\{h\le a\})
		=\lim_{n\to\infty}\Mc{E}(f_n^{h,a})
		=\lim_{n\to\infty}\Mc{E}(g_n^{h,a})
		=\mu_{\left\langle g \right\rangle}(\{h\le a\}).
	\end{gather*}
	
	For the general case, note that the desired statement is equivalent to
	\begin{equation}\label{eqmeaswloc}
		\mu_{\langle f\rangle}|_{\{f-g=c\}}(A)=\mu_{\langle g\rangle}|_{\{f-g=c\}}(A)
	\end{equation}
	for any $c\in\Mb{R}$ and $A\in\Mc{B}(X)$.
	If $A=\{h\le a\}$ for some $h\in\Mc{V}$ and $a\in\Mb{R}$, then \eqref{eqmeaswloc} follows from the previous case, because
	$
	\{h\le a\}\cap\{f-g=c\}=\{(h-a)\vee|f-g-c|\le 0\}$.
	Therefore, \eqref{eqmeaswloc} holds for any $A\in\Mc{B}(X)$ (and any $c\in\Mb{R}$) by Lemma \ref{lemapproxopen} and the outer regularity of Radon measures.
\end{proof}

\begin{proof}[Proof of Theorem \ref{thmclk} for \eqref{eqCLK1}]
	As in the proofs of \eqref{eqCLK2}--\eqref{eqCLK4}, it suffices to consider the case $A=\{h\le a\}$ for some $h\in\Mc{V}$ and $a\in\Mb{R}$.
	Since $(f_n^{h,a}+g_n^{h,a})|_{\{h\ge a+2^{-n}\}}=0$, we have
	\begin{align*}
		&\mu_{\left\langle f_n^{h,a}+g_n^{h,a} \right\rangle}\bigl(\{f_n^{h,a}+g_n^{h,a}=T_n\circ f+T_n\circ g\}\bigr)\\
		=&\mu_{\left\langle f_n^{h,a}+g_n^{h,a} \right\rangle}\bigl(\{f_n^{h,a}+g_n^{h,a}=T_n\circ f+T_n\circ g,\ h\le a+2^{-n}\}\bigr)\\
		=&\mu_{\left\langle T_n\circ f+T_n\circ g \right\rangle}\bigl(\{f_n^{h,a}+g_n^{h,a}=T_n\circ f+T_n\circ g,\ h\le a+2^{-n}\}\bigr)\\
		\le&\mu_{\left\langle T_n\circ f+T_n\circ g \right\rangle}\bigl(\{h\le a+2^{-n}\}\bigr)
		=\mu_{\left\langle f+g \right\rangle}(A_{n,a+2^{-n},0})+\mu_{\left\langle f-g \right\rangle}(A_{n,a+2^{-n},1}),
	\end{align*}
	by Proposition \ref{propwloc}, where $A_{n,a,k}$ is defined by \eqref{defanak}.
	The same argument applies to $f_n^{h,a}-g_n^{h,a}$.
	Hence,
	\begin{align}
		&2\left(\Mc{E}(f_n^{h,a})^{\frac{1}{p-1}}+\Mc{E}(g_n^{h,a})^{\frac{1}{p-1}}\right)^{p-1} \notag\\
		\le&\Mc{E}(f_n^{h,a}+g_n^{h,a})+\Mc{E}(f_n^{h,a}-g_n^{h,a})\notag\\
		\le&\begin{multlined}[t][.9\linewidth]
			\sum_{k=0,1}\left(\mu_{\left\langle f+g \right\rangle}(A_{n,a+2^{-n},k})+\mu_{\left\langle f-g \right\rangle}(A_{n,a+2^{-n},k})\right)\\
			+\mu_{\left\langle f_n^{h,a}+g_n^{h,a} \right\rangle}\bigl(\{f_n^{h,a}+g_n^{h,a}\ne T_n\circ f+T_n\circ g\}\bigr)\\
			+\mu_{\left\langle f_n^{h,a}-g_n^{h,a} \right\rangle}\bigl(\{f_n^{h,a}-g_n^{h,a}\ne T_n\circ f-T_n\circ g\}\bigr),
		\end{multlined}\label{eqforCLK1}
	\end{align}
	by \ref{defclk}.
	Since $h>a$ whenever $f_n^{h,a}\ne T_n\circ f$ or $g_n^{h,a}\ne T_n\circ g$, we have
	\begin{align*}
		&\lim_{n\to\infty}\mu_{\left\langle f_n^{h,a}+g_n^{h,a} \right\rangle}\bigl(\{f_n^{h,a}+g_n^{h,a}\ne T_n\circ f+T_n\circ g\}\bigr)\\
		\le&\lim_{n\to\infty}\mu_{\left\langle f_n^{h,a}+g_n^{h,a} \right\rangle}\bigl(\{a<h\le a+2^{-n}\}\bigr)\\
		\le&\lim_{n\to\infty}2\left(\mu_{\left\langle f_n^{h,a} \right\rangle}\bigl(\{a<h\le a+2^{-n}\}\bigr)+\mu_{\left\langle g_n^{h,a} \right\rangle}\bigl(\{a<h\le a+2^{-n}\}\bigr)\right)\\
		\le&\lim_{n\to\infty}2c_p\Bigl(\mu_{\left\langle f \right\rangle}\bigl(\{a<h\le a+2^{-n}\}\bigr)+\mu_{\left\langle g \right\rangle}\bigl(\{a<h\le a+2^{-n}\}\bigr)
		+2\Mc{E}^\Mc{V}\bigl(\deco{S_n^a\circ h}\bigr)\Bigr)\\
		=&0,
	\end{align*}
	and similarly
	$
	\lim_{n\to\infty}\mu_{\left\langle f_n^{h,a}-g_n^{h,a} \right\rangle}\bigl(\{f_n^{h,a}-g_n^{h,a}\ne T_n\circ f-T_n\circ g\}\bigr)=0$.
	Therefore, \eqref{eqCLK1} follows from \eqref{eqforCLK1}.
\end{proof}
In the remainder of this section, we prove the triangle inequality.
\begin{lem}\label{lemminconti}
	For any $f,g\in\core$ and $a\in\Mb{R}$,
	\[
	\lim_{t\to 0}\Mc{E}^\Mc{V}\Bigl(\deco{(f+tg)\wedge a}-\deco{f\wedge a}\Bigr)=0.
	\]
\end{lem}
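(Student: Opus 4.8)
The plan is to reduce the claim to an energy-convergence statement for the truncations $\Mc{C}_{-R}^a\circ(f+tg)$ and then invoke Lemma \ref{lemconvconv}. Put $R:=\|f\|_{\infty}+\|g\|_{\infty}+|a|+1$, so that $-R<a<R$ and $\|f+tg\|_{\infty}<R$ whenever $|t|\le1$, and set $u_t:=\Mc{C}_{-R}^a\circ(f+tg)\in\core$ for such $t$. On $[-R,R]$ one computes $\Mc{C}_{-R}^a(s)=s\wedge a-(0\wedge a)$, so, since $\|f+tg\|_{\infty}<R$, $(f+tg)\wedge a=u_t+(0\wedge a)$ and likewise $f\wedge a=u_0+(0\wedge a)$; as the constant $0\wedge a$ does not depend on $t$, the linearity of $\deco\cdot$ together with $\deco{w}=w$ for $w\in\core$ gives
\[\deco{(f+tg)\wedge a}-\deco{f\wedge a}=\deco{(f+tg)\wedge a-f\wedge a}=\deco{u_t-u_0}=u_t-u_0.\]
Hence it suffices to show $\lim_{t\to0}\Mc{E}(u_t-u_0)=0$.

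The crucial step is to prove $\lim_{t\to0}\Mc{E}(u_t)=\Mc{E}(u_0)$. Applying Lemma \ref{lemfold} to the identity function with breakpoints $-R<a<R$ (legitimate because $\|f+tg\|_{\infty}\le R$), and using $\Mc{C}_{-R}^a+\Mc{C}_a^R=\id$ on $[-R,R]$, we obtain
\[\Mc{E}(f+tg)=\Mc{E}(\Mc{C}_{-R}^a\circ(f+tg))+\Mc{E}(\Mc{C}_a^R\circ(f+tg))=\Mc{E}(u_t)+\Mc{E}(\Mc{C}_a^R\circ(f+tg)),\]
and likewise $\Mc{E}(f)=\Mc{E}(u_0)+\Mc{E}(\Mc{C}_a^R\circ f)$. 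Since $\Mc{E}^{1/p}$ is a seminorm, $\Mc{E}(f+tg)\to\Mc{E}(f)$ as $t\to0$. Moreover $\Mc{C}_{-R}^a$ and $\Mc{C}_a^R$ are $1$-Lipschitz, so $u_t\to u_0$ and $\Mc{C}_a^R\circ(f+tg)\to\Mc{C}_a^R\circ f$ in $L^p(\Mf{m})$ as $t\to0$, while the energies in question stay bounded (by $\Mc{E}(f+tg)$, via Corollary \ref{corincl}~(1) and the triangle inequality for $\Mc{E}^{1/p}$). Hence the lower semicontinuity of $\Mc{E}^{1/p}$ along $L^p$-convergent, energy-bounded sequences — which follows from the reflexivity of $(\Mc{F},\Mc{E}_1)$; cf.\ \cite[Proposition 3.18(a)]{KSdiff} — yields $\liminf_{t\to0}\Mc{E}(u_t)\ge\Mc{E}(u_0)$ and $\liminf_{t\to0}\Mc{E}(\Mc{C}_a^R\circ(f+tg))\ge\Mc{E}(\Mc{C}_a^R\circ f)$. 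Combining these with the two displayed identities,
\[\limsup_{t\to0}\Mc{E}(u_t)=\limsup_{t\to0}\bigl(\Mc{E}(f+tg)-\Mc{E}(\Mc{C}_a^R\circ(f+tg))\bigr)\le\Mc{E}(f)-\Mc{E}(\Mc{C}_a^R\circ f)=\Mc{E}(u_0),\]
so indeed $\lim_{t\to0}\Mc{E}(u_t)=\Mc{E}(u_0)$.

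To finish, I would apply Lemma \ref{lemconvconv} along an arbitrary sequence $t_n\to0$: since $|u_{t_n}-u_0|\le|t_ng|$ we have $u_{t_n}\to u_0$ and $\tfrac{u_{t_n}+u_{t_m}}{2}\to u_0$ in $L^p(\Mf{m})$, the energies $\Mc{E}(u_{t_n})$ are bounded, and the lower semicontinuity used above gives $\liminf_{n,m\to\infty}\Mc{E}\bigl(\tfrac{u_{t_n}+u_{t_m}}{2}\bigr)\ge\Mc{E}(u_0)=\lim_{n\to\infty}\Mc{E}(u_{t_n})=\limsup_{n\to\infty}\Mc{E}(u_{t_n})$, which is exactly \eqref{eqconv}. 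Therefore $u_{t_n}\to u_0$ in $(\Mc{F},\Mc{E}_1)$, so $\Mc{E}(u_{t_n}-u_0)\to0$; as $t_n\to0$ was arbitrary, $\lim_{t\to0}\Mc{E}(u_t-u_0)=0$, which by the first paragraph is the assertion. The main obstacle is the crucial step: the naive estimate $\Mc{E}(u_t)\le\Mc{E}(f+tg)$ only gives $\limsup_{t\to0}\Mc{E}(u_t)\le\Mc{E}(f)$, which generally exceeds the target $\Mc{E}(u_0)$; the point is that the additive decomposition $\Mc{E}(f+tg)=\Mc{E}(u_t)+\Mc{E}(\Mc{C}_a^R\circ(f+tg))$ furnished by Lemma \ref{lemfold} exhibits the defect as a lower-semicontinuous term, forcing the two one-sided bounds to coincide.
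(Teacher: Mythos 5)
Your argument is correct, and it takes a genuinely different route from the paper's. The paper proves this lemma \emph{after} (and by means of) the energy-measure machinery of Sections 3--4: it writes $\Mc{E}\bigl(\deco{(f+tg)\wedge a}-\deco{f\wedge a}\bigr)$ as $\mu_{\langle\cdot\rangle}(X)$ via Proposition \ref{propmeasrep}, splits $X$ into the three regions $\{(f+tg)\vee f\le a\}$, $\{f+tg>a,\,f<a\}$, $\{f+tg<a,\,f>a\}$ using the locality of the measures (Proposition \ref{propwloc}), and then bounds the first piece by $\Mc{E}(tg)=|t|^p\Mc{E}(g)$ and the other two by $\mu_{\langle f\rangle}$ of sets shrinking to $\emptyset$, invoking Theorem \ref{thmclk} and Lemma \ref{lemforclk1} along the way. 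You instead stay entirely within the Section 2 toolkit: the exact additivity $\Mc{E}(f+tg)=\Mc{E}(\Mc{C}_{-R}^a\circ(f+tg))+\Mc{E}(\Mc{C}_a^R\circ(f+tg))$ from Lemma \ref{lemfold} turns the naive one-sided bound into a two-sided squeeze via lower semicontinuity of $\Mc{E}$ along $L^p$-convergent energy-bounded families (the same use of \cite[Proposition 3.18(a)]{KSdiff} the paper makes in Proposition \ref{propnMarkov}), and Lemma \ref{lemconvconv} then upgrades convergence of energies to convergence in $\Mc{E}_1$. Each step checks out, including the reduction $\deco{(f+tg)\wedge a}-\deco{f\wedge a}=u_t-u_0$ and the verification of \eqref{eqconv} for $\{u_{t_n}\}$. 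What your approach buys is independence from the energy measures themselves, so the lemma could be stated and proved already in Section 2; what the paper's approach buys is brevity at the point where the lemma actually appears, since Theorem \ref{thmclk} and Proposition \ref{propwloc} are by then available and the three-region decomposition is immediate from them.
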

\begin{proof}
	We may assume $t\ge 0$.
	By Lemma \ref{lemforclk1}, Theorem \ref{thmclk}, and Proposition \ref{propwloc}, we have
	\begin{align*}
		&\Mc{E}^\Mc{V}\Bigl(\deco{(f+tg)\wedge a}-\deco{f\wedge a}\Bigr)\\
		=&\begin{multlined}[t][.9\linewidth]
			\mu_{\langle tg\rangle}\bigl(\{(f+tg)\vee f\le a\}\bigr)
			+\mu_{\bigl\langle \deco{f+tg}\bigr\rangle}\bigl(\{f+tg<a<f\}\bigr)\\
			+\mu_{\bigl\langle \deco{-f}\bigr\rangle}\bigl(\{f<a<f+tg\}\bigr)
		\end{multlined}\\
		\le&\begin{multlined}[t][.9\linewidth]
			\Mc{E}(tg)
			+2^{(p-1)\vee 1}\Bigl(\Mc{E}(tg)+\mu_{\langle f\rangle}\bigl(\{f+tg<a<f\}\bigr)\Bigr)\\
			+\mu_{\langle f\rangle}\bigl(\{f<a<f+tg\}\bigr).
		\end{multlined}
	\end{align*}
	Hence, the desired limit holds.
\end{proof}

\begin{lem}\label{lemmeasconti}
	For any $f,g,h\in\core$ and $a\in\Mb{R}$, the map
	$
	t\longmapsto \mu_{\langle f+tg\rangle}(\{h\le a\})$
	is continuous on $\Mb{R}$.
\end{lem}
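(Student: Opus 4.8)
The plan is to show that, along the affine line $t\mapsto f+tg$, the energy measure of the fixed Borel set $A:=\{h\le a\}$ is, up to a fixed exponent, a convex function of $t$, whence continuity is automatic. Put $F(\cdot):=\mu_{\langle\cdot\rangle}(A)^{1/p}$, which satisfies the $p$-Clarkson inequalities on $\core$ by Theorem \ref{thmclk}, and set $p':=\tfrac{p}{p-1}$, $q:=p\vee p'\ (\ge2)$ and $\phi(t):=F(f+tg)^{q}$ for $t\in\Mb{R}$. Since $\core$ is a linear space, $f+tg\in\core$ for every $t$, so $\phi$ is well defined and finite; moreover $F(f+tg)\le\mu_{\langle f+tg\rangle}(X)^{1/p}=\Mc{E}(f+tg)^{1/p}\le\Mc{E}(f)^{1/p}+|t|\,\Mc{E}(g)^{1/p}$ by Proposition \ref{propmeasrep}~(1), monotonicity of the measure $\mu_{\langle f+tg\rangle}$, and the fact that $\Mc{E}^{1/p}$ is a seminorm. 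In particular $\phi$ is bounded above on every bounded interval.

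The main step is to verify that $\phi$ is midpoint-convex on $\Mb{R}$. Fix $s,t\in\Mb{R}$ and apply the $p$-Clarkson inequalities to $u:=f+\tfrac{s+t}{2}g$ and $v:=\tfrac{t-s}{2}g$, both in $\core$, noting $u+v=f+tg$ and $u-v=f+sg$. If $p\ge2$, then \eqref{eqCLK4} gives $F(f+tg)^{p}+F(f+sg)^{p}\ge2\bigl(F(u)^{p}+F(v)^{p}\bigr)\ge2F(u)^{p}$, i.e.\ $\phi\bigl(\tfrac{s+t}{2}\bigr)\le\tfrac12\bigl(\phi(s)+\phi(t)\bigr)$, since $q=p$ in this case. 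If $p\in(1,2]$, then \eqref{eqCLK1} gives $2\bigl(F(u)^{p'}+F(v)^{p'}\bigr)^{p-1}\le F(f+tg)^{p}+F(f+sg)^{p}$; because $r\mapsto r^{1/(p-1)}$ is convex (as $\tfrac1{p-1}\ge1$), this yields
\[F(u)^{p'}+F(v)^{p'}\le\Bigl(\tfrac{F(f+tg)^{p}+F(f+sg)^{p}}{2}\Bigr)^{1/(p-1)}\le\tfrac12\bigl(F(f+tg)^{p'}+F(f+sg)^{p'}\bigr),\]
and dropping $F(v)^{p'}\ge0$ gives $\phi\bigl(\tfrac{s+t}{2}\bigr)\le\tfrac12\bigl(\phi(s)+\phi(t)\bigr)$, since $q=p'$ in this case.

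Finally, a midpoint-convex function on $\Mb{R}$ that is bounded above on some non-degenerate interval is convex, hence continuous; applying this to $\phi$ shows that $\phi$ is continuous on $\Mb{R}$, and therefore so is $t\mapsto\mu_{\langle f+tg\rangle}(h\le a)=\phi(t)^{p/q}$, which proves the lemma. I expect the only genuine content to lie in the second paragraph: for $p\ge2$ the ``smoothing'' inequality \eqref{eqCLK4} directly yields midpoint-convexity of $F^{p}$, whereas for $p\in(1,2]$ one must first pass from $F$ to $F^{p'}$ and use convexity of $r\mapsto r^{1/(p-1)}$ to absorb the outer exponent $p-1$ in \eqref{eqCLK1}. (One could instead argue more directly from the $\Mc{E}_1$-convergence of the truncations in Lemma \ref{lemminconti}, but the convexity route seems cleaner and avoids interchanging limits.)
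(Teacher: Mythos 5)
Your proof is correct, but it takes a genuinely different and more elementary route than the paper. The paper proves this lemma by a rather delicate approximation: it introduces the sets $A_n(u)$ sandwiched between $\{h\le a\}$ and $\{h\le a+2^{-n}\}$ and the quantities $E_n(u)$, rewrites $E_n(u)$ via Proposition \ref{propwloc} as a finite alternating sum of energies of truncations of $\frac{h\pm u}{2}$, deduces $\lim_{t\to 0}E_n(f+tg)=E_n(f)$ from Lemma \ref{lemminconti}, and then shows $E_n(f+tg)\to \mu_{\langle h\rangle}(h\le a)-\mu_{\langle f+tg\rangle}(h\le a)$ uniformly in $t\in[-1,1]$. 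Your argument bypasses all of this: since the full Theorem \ref{thmclk} (including \eqref{eqCLK1}) is already available at this point in the paper, the convexity halves \eqref{eqCLK1} and \eqref{eqCLK4} of the Clarkson inequalities for $\mu_{\langle\cdot\rangle}(A)^{1/p}$, applied to $u=f+\frac{s+t}{2}g$ and $v=\frac{t-s}{2}g$, give midpoint convexity of $\phi(t)=\mu_{\langle f+tg\rangle}(A)^{q/p}$ exactly as you compute (the passage from \eqref{eqCLK1} to the power $p'=\frac{p}{p-1}$ via convexity of $r\mapsto r^{1/(p-1)}$ is the only small subtlety, and you handle it correctly); local boundedness follows from $\mu_{\langle w\rangle}(A)\le\mu_{\langle w\rangle}(X)=\Mc{E}(w)$ (Proposition \ref{propmeasrep}) and the seminorm property of $\Mc{E}^{1/p}$, and Bernstein--Doetsch then yields convexity and hence continuity. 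There is no circularity, since Theorem \ref{thmclk} and Proposition \ref{propmeasrep} are established before this lemma and do not depend on it. Your version buys a shorter proof, makes Lemma \ref{lemminconti} unnecessary for this purpose, and in fact proves the stronger statement that $t\mapsto\mu_{\langle f+tg\rangle}(A)$ is continuous (indeed locally Lipschitz, being a power of a convex function) for an \emph{arbitrary} Borel set $A$, not just for sublevel sets $\{h\le a\}$ with $h\in\core$; the paper's proof, by contrast, is tied to sets of the form $\{h\le a\}$ because it manipulates truncations of $h\pm u$.
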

\begin{proof}
	It suffices to show
	$
	\lim_{t\to 0}\mu_{\langle f+tg\rangle}(\{h\le a\})=\mu_{\langle f\rangle}(\{h\le a\})$.
	For $u\in\core$, set
	\begin{align*}
		A_n(u)
		&:=\bigcup_{i\in\Mb{Z}}
		\left\{\,h-u\le -\frac{i}{2^n},\ 
		2a+\frac{i}{2^n}<h+u\le 2a+\frac{i+2}{2^n}\,\right\},\\
		E_n(u)&:=\mu_{\langle h\rangle}\bigl(A_n(u)\bigr)-\mu_{\langle u\rangle}\bigl(A_n(u)\bigr).
	\end{align*}
	Note that (by construction) $\{h\le a\}\subset A_n(u)\subset \{h\le a+2^{-n}\}$.
	
	Proposition \ref{propwloc} yields, for any $v,w\in\core$ and $\alpha,\beta\in\Mb{R}$,
	\begin{align*}
		&\Mc{E}^\Mc{V}\bigl(\deco{(v\wedge \alpha)}+\deco{(w\wedge \beta)}\bigr)
		-\Mc{E}^\Mc{V}\bigl(\deco{(v\wedge \alpha)}-\deco{(w\wedge \beta)}\bigr)\\
		&\qquad=
		\mu_{\langle v+w\rangle}\bigl(\{u\le \alpha,\ v\le \beta\}\bigr)
		-\mu_{\langle v-w\rangle}\bigl(\{u\le \alpha,\ v\le \beta\}\bigr).
	\end{align*}
	Applying this identity with $(v,w)=\bigl(\frac{h-u}{2},\frac{h+u}{2}\bigr)$ and summing over the relevant indices, we obtain
	\begin{align*}
		E_n(u)
		=\sum_{i=-N}^{N}\begin{multlined}[t][.8\linewidth]
			\Mc{E}^\Mc{V}\left(\left(\deco{\frac{h-u}{2}\wedge\frac{-i}{2^{n+1}}}\right)
			+\left(\deco{\frac{h+u}{2}\wedge\left( a+\frac{i}{2^{n+1}}+\frac{1}{2^n}\right)}\right)\right)\\
			-\Mc{E}^\Mc{V}\left(\left(\deco{\frac{h-u}{2}\wedge\frac{-i}{2^{n+1}}}\right)
			-\left(\deco{\frac{h+u}{2}\wedge\left( a+\frac{i}{2^{n+1}}+\frac{1}{2^n}\right)}\right)\right)\\
			-\Mc{E}^\Mc{V}\left(\left(\deco{\frac{h-u}{2}\wedge\frac{-i}{2^{n+1}}}\right)
			+\left(\deco{\frac{h+u}{2}\wedge\left( a+\frac{i}{2^{n+1}}\right)}\right)\right)\\
			+\Mc{E}^\Mc{V}\left(\left(\deco{\frac{h-u}{2}\wedge\frac{-i}{2^{n+1}}}\right)
			-\left(\deco{\frac{h+u}{2}\wedge\left( a+\frac{i}{2^{n+1}}\right)}\right)\right).
		\end{multlined}
	\end{align*}
	Here one may take
	$
	\infty>N\ge C\,2^n\Bigl((\|u\|_{\infty}+\|h\|_{\infty})\vee 1\Bigr)$,
	where $C>0$ is independent of $a,h,u$.
	Therefore, by Lemma \ref{lemminconti}, we have $\lim_{t\to 0}E_n(f+tg)=E_n(f)$ for each fixed $n$.
	Moreover,
	\begin{align*}
		&-\mu_{\langle h\rangle}(\{a<h\le a+2^{-n}\})\\
		\le\ &\mu_{\langle h\rangle}(\{h\le a\})-\mu_{\langle f+tg\rangle}(\{h\le a\})-E_n(f+tg)\\
		\le\ &\mu_{\langle f+tg\rangle}(\{a<h\le a+2^{-n}\})\\
		\le\ &2^{(p-1)\vee 1}\Bigl(\mu_{\langle f\rangle}(\{a<h\le a+2^{-n}\})
		+|t|^p\,\mu_{\langle g\rangle}(\{a<h\le a+2^{-n}\})\Bigr).
	\end{align*}
	Hence,
	$\lim_{n\to\infty}E_{n}(f+tg)=\mu_{\left\langle h \right\rangle}(\{h\le a\})-\mu_{\left\langle f+tg \right\rangle}(\{h\le a\})$
	uniformly for $t\in[-1,1]$.  Combining this uniform convergence with $\lim_{t\to 0}E_n(f+tg)=E_n(f)$ yields
	$
	\lim_{t\to 0}\mu_{\langle f+tg\rangle}(h\le a)=\mu_{\langle f\rangle}(h\le a)$
	as desired.
\end{proof}

Now we are ready to prove the triangle inequality. For future applications, we prove the following general statement.

\begin{lem}\label{lemSNorm}
	Let $Y$ be a $K$-vector space ($K=\Mb{R}$ or $\Mb{C}$), and let $\Phi:Y\to[0,\infty)$ satisfy:
	\begin{description}
		\Ncond{condhomo} (1-homogeneous) $\Phi(ax)=|a|\Phi(x)$ for any $a\in K$ and $x\in Y$.
		\Ncond{condMax} $\Phi\!\left(\frac{x+y}{2}\right)\le \Phi(x)\vee \Phi(y)$ for any $x,y\in Y$.
		\Ncond{condlsc} For any $x,y\in Y$, the map $t\mapsto \Phi(x+ty)$ is lower semicontinuous on $\Mb{R}$.
	\end{description}
	Then $\Phi$ is a seminorm on $Y$.
\end{lem}
\begin{proof}
	It suffices to prove the triangle inequality. We divide the proof into three cases.
	
	\begin{description}
		\item[Case 1: $\Phi(x),\Phi(y)>0$.]
		By \ref{condhomo},
		\[
		\frac{\Phi(x+y)}{\Phi(x)+\Phi(y)}
		=\Phi\!\left(\frac{\Phi(x)}{\Phi(x)+\Phi(y)}\cdot \frac{x}{\Phi(x)}
		+\frac{\Phi(y)}{\Phi(x)+\Phi(y)}\cdot\frac{y}{\Phi(y)} \right).
		\]
		Hence it is enough to show $\Phi(tx+(1-t)y)\le 1$ for every $t\in(0,1)$ under the normalization $\Phi(x)=\Phi(y)=1$.
		We argue by contradiction.
		
		Assume $\Phi(t_*x+(1-t_*)y)>1$ for some $t_*\in(0,1)$. Define
		\begin{align*}
			a&:=\inf\{t\in(t_*,1]\mid \Phi(tx+(1-t)y)\le1\},\\
			b&:=\sup\{t\in[0,t_*)\mid \Phi(tx+(1-t)y)\le1\}.
		\end{align*}
		Then, by the definition of $a,b$ and $t_*$, we have
		\[
		\Phi\!\left(\frac{a+b}{2}x+\left(1-\frac{a+b}{2}\right)y\right)>1,
		\]
		while $\Phi(ax+(1-a)y)\le 1$ and $\Phi(bx+(1-b)y)\le 1$ by lower semicontinuity \ref{condlsc}.
		This contradicts \ref{condMax}.
		
		\item[Case 2: $\Phi(x)=\Phi(y)=0$.]
		By the same argument as in Case~1, we obtain $\Phi(tx+(1-t)y)=0$ for every $t\in[0,1]$.
		In particular, $\Phi(x+y)=2\,\Phi\!\left(\frac{x+y}{2}\right)=0$.
		
		\item[Case 3: otherwise.]
		We may assume $\Phi(x)>0$, $\Phi(y)=0$, and $\Phi(x+y)>0$.
		By the proof of Case~2 and \ref{condlsc}, there exists $t_0\in[0,1/2)$ such that
		$\Phi(tx+(1-t)y)=0$ for $t\in[0,t_0]$ and $\Phi(tx+(1-t)y)>0$ for $t\in(t_0,1]$.
		
		We claim that $\liminf_{t\downarrow t_0}\Phi(tx+(1-t)y)=0$.
		If not, then there exist $t_1\in(t_0,1/2)$ and $\epsilon>0$ such that
		$\Phi(tx+(1-t)y)\ge \epsilon$ for all $t\in(t_0,t_1]$.
		Let $w=t_0x+(1-t_0)y$ and $z=t_1x+(1-t_1)y$.
		For $s\in(0,1)$, we have
		\[
		\Phi\!\left(\frac{w+sz}{2}\right)
		=\frac{1+s}{2}\,\Phi\!\left(\frac{w+sz}{1+s}\right)\ge \frac{\epsilon}{2},
		\]
		where we used \ref{condhomo}. On the other hand,
		\[
		\Phi\!\left(\frac{w+sz}{2}\right)\le \Phi(sz)\vee\Phi(w)=s\,\Phi(z)
		\]
		by \ref{condhomo} and \ref{condMax}, which is a contradiction for sufficiently small $s$.
		Therefore $\liminf_{t\downarrow t_0}\Phi(tx+(1-t)y)=0$.
		
		Now fix $t\in(t_0,1/2)$. By Case~1 and \ref{condhomo}, applied to the pair $x$ and $tx+(1-t)y$,
		\[
		\Phi\!\left(\frac{x+y}{2}\right)
		\le \frac{1-2t}{2-2t}\Phi(x)+\frac{1}{2-2t}\Phi(tx+(1-t)y).
		\]
		Letting $t\downarrow t_0$ and using $\liminf_{t\downarrow t_0}\Phi(tx+(1-t)y)=0$, we obtain
		\[
		\Phi(x+y)
		=2\Phi\!\left(\frac{x+y}{2}\right)
		\le 2\cdot\frac{1-2t_0}{2-2t_0}\Phi(x)
		\le \Phi(x)
		=\Phi(x)+\Phi(y).
		\]
	\end{description}
\end{proof}

\begin{rem}
	By a similar argument, the same conclusion holds if we replace \ref{condMax} and \ref{condlsc} with the following two conditions:
	\begin{description}
		\Ncond{condMconvex} If $\Phi(x)=\Phi(y)=1$, then $\Phi\!\left(\frac{x+y}{2}\right)\le 1$.
		\Ncond{condconti} For any $x,y\in Y$, the map $t\mapsto \Phi(x+ty)$ is continuous on $\Mb{R}$.
	\end{description}
\end{rem}

\begin{thm}\label{thmtri}
	For any $A\in\Mc{B}(X)$, the functional $f\mapsto \mu_{\langle f\rangle}(A)^{1/p}$ is a seminorm on $\core$.
\end{thm}
\begin{proof}
	As in the proof of Theorem \ref{thmclk}, it suffices to consider the case $A=\{h\le a\}$ for some $h\in\Mc{V}$ and $a\in\Mb{R}$.
	Since inequalities \eqref{eqCLK1} or \eqref{eqCLK4} imply \ref{condMax}, the claim follows from Lemmas \ref{lemforclk1}, \ref{lemmeasconti}, and \ref{lemSNorm}, together with Theorem \ref{thmclk}.
\end{proof}

Theorem \ref{thmtri} enables us to define $\mu_{\langle u\rangle}$ for any $u\in\Mc{F}$.

\begin{defi}
	Let $u\in\Mc{F}$. By \ref{defreg}, there exists a sequence $u_n\in\core$ $(n\ge1)$ such that $\Mc{E}(u-u_n)\to 0$ as $n\to\infty$.
	We define, for $A\in\Mc{B}(X)$,
	\[
	\mu_{\langle u\rangle}(A):=\lim_{n\to\infty}\mu_{\langle u_n\rangle}(A).
	\]
\end{defi}

\begin{rem}
	Since $|\mu_{\left\langle f \right\rangle}(A)^\frac{1}{p}-\mu_{\left\langle g \right\rangle}(A)^\frac{1}{p}|\le\Mc{E}(f-g)^\frac{1}{p}$, $\mu_{\left\langle u \right\rangle}(A)$ is well-defined.
\end{rem}

\begin{cor}\label{corexist}
	$\{\mu_{\langle f\rangle}\}_{f\in\Mc{F}}$ is a family of Radon measures satisfying \ref{condmeasene}--\ref{condmeasclk}.
\end{cor}
The proof is straightforward.

\section{\texorpdfstring{Proof of Theorem \ref{thmunique} (Part IV): chain and Leibniz rules for energy measures}{Chain and Leibniz rules for energy measures}}\label{secchain}
In this section, we prove the chain and Leibniz rules for energy measures. We first recall the differentiability of the map $t\mapsto \mu_{\langle f+tg\rangle}(A)$, which appears in \ref{condmeaschain}.

\begin{defi}[{\cite[Proposition 4.3, Definition 4.4, and Theorem 4.6]{KSdiff}}]
	For any $f,g\in\Mc{F}$ and $A\in\Mc{B}(X)$, the derivative
	\[
	\mu_{\langle f;g\rangle}(A):=\frac{1}{p}\frac{\der}{\der t}\mu_{\langle f+tg\rangle}(A)\Big|_{t=0}
	\]
	exists. Moreover, $\mu_{\langle f;g\rangle}$ is a signed Borel measure on $(X,d)$ with $\mu_{\langle f;f\rangle}=\mu_{\langle f\rangle}$.
\end{defi}

\begin{lem}\label{lemsmwloc}
	Let $f,g,h\in\core$ and $A\in\Mc{B}(X)$.
	\begin{enumerate}
		\item $\mu_{\langle f;g\rangle}(A)=\mu_{\langle g;f\rangle}(A)=0$ if $f|_{A}$ is constant.
		\item $\mu_{\langle f;h\rangle}(A)=\mu_{\langle g;h\rangle}(A)$ and $\mu_{\langle h;f\rangle}(A)=\mu_{\langle h;g\rangle}(A)$ if $(f-g)|_{A}$ is constant.
	\end{enumerate}
\end{lem}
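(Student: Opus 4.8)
I would prove both assertions by reducing them, via the defining formula $\nu_{\langle u;v\rangle}(A)=\tfrac1p\tfrac{\der}{\der t}\nu_{\langle u+tv\rangle}(A)|_{t=0}$, to the weak locality property \eqref{condmeasloc} for $\nu$ (available by the standing assumption of this section) together with the fact, contained in \eqref{condmeasclk}, that $\nu_{\langle\cdot\rangle}(A)^{1/p}$ is a seminorm on $\Mc{F}$. The point in each case is to rewrite $t\mapsto\nu_{\langle u+tv\rangle}(A)$, for the relevant pair $(u,v)$, as an explicit function of $t$ whose derivative at the origin is transparent; no new estimates are needed.

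For (1), assume $f|_A$ is constant. For each $t\in\Mb{R}$ we have $f+tg,\,tg\in\core$ and $\bigl((f+tg)-tg\bigr)|_A=f|_A$ is constant, so \eqref{condmeasloc} gives $\nu_{\langle f+tg\rangle}(A)=\nu_{\langle tg\rangle}(A)=|t|^p\,\nu_{\langle g\rangle}(A)$, the last equality by the seminorm property. Since $p>1$, the map $t\mapsto|t|^p\,\nu_{\langle g\rangle}(A)$ has derivative $0$ at $t=0$, hence $\nu_{\langle f;g\rangle}(A)=0$. For $\nu_{\langle g;f\rangle}(A)$ I would instead observe that $\bigl((g+tf)-g\bigr)|_A=t\,f|_A$ is constant for each fixed $t$, so $\nu_{\langle g+tf\rangle}(A)=\nu_{\langle g\rangle}(A)$ is constant in $t$ and therefore $\nu_{\langle g;f\rangle}(A)=0$.

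For (2), assume $(f-g)|_A$ is constant. Then for every $t$ the difference $\bigl((f+th)-(g+th)\bigr)|_A=(f-g)|_A$ is constant, so by \eqref{condmeasloc} the functions $t\mapsto\nu_{\langle f+th\rangle}(A)$ and $t\mapsto\nu_{\langle g+th\rangle}(A)$ coincide identically, and differentiating at $t=0$ gives $\nu_{\langle f;h\rangle}(A)=\nu_{\langle g;h\rangle}(A)$. Likewise, for each fixed $t$ the difference $\bigl((h+tf)-(h+tg)\bigr)|_A=t\,(f-g)|_A$ is constant, so $\nu_{\langle h+tf\rangle}(A)=\nu_{\langle h+tg\rangle}(A)$ for all $t$, and differentiating at $t=0$ yields $\nu_{\langle h;f\rangle}(A)=\nu_{\langle h;g\rangle}(A)$.

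There is essentially no genuine obstacle here; the only thing requiring a word of care is the passage of $\tfrac{\der}{\der t}|_{t=0}$ through the identities above, which is legitimate because that derivative exists a priori (this is part of the Definition preceding the lemma, from \cite{KSdiff}), so the derivatives at $t=0$ of two families agreeing identically in $t$ — or of the explicit family $|t|^p\,\nu_{\langle g\rangle}(A)$, which is differentiable at $0$ precisely because $p>1$ — necessarily coincide.
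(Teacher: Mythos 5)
Your proof is correct, but it takes a different route from the paper's. The paper disposes of the lemma in one line by combining \eqref{condmeasloc} with the quantitative two-variable estimates of \cite[Theorem~4.5]{KSdiff} (the same H\"older-type bound $|\nu_{\langle u;v\rangle}(A)-\nu_{\langle u';v\rangle}(A)|\lesssim \nu_{\langle u-u'\rangle}(A)^{\alpha_p/p}\cdots$ that reappears in the proof of Lemma~\ref{lemconti}): part (1) then follows because $f|_A$ constant forces $\nu_{\langle f\rangle}(A)=0$, and part (2) because $\nu_{\langle f-g\rangle}(A)=0$. You instead argue directly from the definition of $\nu_{\langle u;v\rangle}(A)$ as $\tfrac1p\tfrac{\der}{\der t}\nu_{\langle u+tv\rangle}(A)|_{t=0}$, using only \eqref{condmeasloc} applied to the pairs $(f+tg,\,tg)$, $(g+tf,\,g)$, $(f+th,\,g+th)$, $(h+tf,\,h+tg)$ (all legitimately in $\core$ under the section's standing assumption $f,g,h\in\core$), together with the $p$-homogeneity $\nu_{\langle tg\rangle}(A)=|t|^p\nu_{\langle g\rangle}(A)$ from \eqref{condmeasclk} and the a priori existence of the $t$-derivative. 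This is entirely valid and arguably preferable in that it is self-contained: it needs no quantitative continuity estimate, only the identification of $t\mapsto\nu_{\langle u+tv\rangle}(A)$ with an explicitly differentiable function ($|t|^p\nu_{\langle g\rangle}(A)$, which has vanishing derivative at $0$ precisely because $p>1$, or a function independent of $t$). What the paper's approach buys in exchange is uniformity with the rest of Section~\ref{secchain}, where the same external estimate is needed anyway for Lemma~\ref{lemconti}.
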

\begin{proof}
	This follows immediately from \ref{condmeasloc} and \cite[Theorem 4.5]{KSdiff}.
\end{proof}

To discuss Radon--Nikodym derivatives of energy measures, we need the following notion, introduced in \cite{Hin10} for $p=2$ and extended to $p>1$ in \cite{MS}.

\begin{defi}[Minimal energy-dominant measure]
	A $\sigma$-finite Borel measure $\mu$ on $(X,d)$ is called a \emph{minimal energy-dominant measure} of $(\Mc{E},\Mc{F})$ if the following two conditions hold:
	\begin{description}
		\DMcond{conddomination} $\mu_{\langle f\rangle}\ll \mu$ for any $f\in\Mc{F}$.
		\DMcond{condminimality} If $\tau$ is a $\sigma$-finite Borel measure satisfying \ref{conddomination}, then $\mu\ll \tau$.
	\end{description}
\end{defi}

Note that $(\Mc{F},\|\cdot\|_{\Mc{E}_1})$ is separable by \cite[Corollary 3.16]{KSdiff}.
\begin{lem}\label{lemMED}
	Let $\{f_i\}_{i\ge 1}$ be a dense subset of $\Mc{F}$, and let $\{a_i\}_{i\ge 1}$ be a sequence of positive numbers such that $\sum_{i\ge 1}a_i\Mc{E}(f_i)<\infty$. Then
	$
	\sum_{i\ge 1} a_i\,\mu_{\langle f_i\rangle}$
	is a finite minimal energy-dominant measure. In particular,
	$
	\sum_{i\ge 1} 2^{-i}\,\Mc{E}(f_i)^{-1}\,\mu_{\langle f_i\rangle}$ is a finite minimal energy-dominant measure if we choose $f_i$ with $\Mc{E}(f_i)>0$.
\end{lem}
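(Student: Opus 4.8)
The plan is to verify the three clauses in the definition of a minimal energy-dominant measure directly for $\nu:=\sum_{i\ge1}a_i\nu_{\left\langle u_i\right\rangle}$. First I would note that $\nu$ is a finite Borel measure: by \eqref{condmeasene} we have $\nu(X)=\sum_{i\ge1}a_i\nu_{\left\langle u_i\right\rangle}(X)=\sum_{i\ge1}a_i\Mc{E}(u_i)<\infty$, and a countable sum of finite Borel measures is again a finite Borel measure; in particular $\nu$ is $\sigma$-finite. Minimality is then immediate: if $\tau$ is a $\sigma$-finite Borel measure with $\nu_{\left\langle u\right\rangle}\ll\tau$ for every $u\in\Mc{F}$ and $\tau(A)=0$, then $\nu_{\left\langle u_i\right\rangle}(A)=0$ for all $i$, hence $\nu(A)=\sum_{i\ge1}a_i\nu_{\left\langle u_i\right\rangle}(A)=0$, so $\nu\ll\tau$.

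The substantive step is the domination $\nu_{\left\langle u\right\rangle}\ll\nu$ for every $u\in\Mc{F}$. I would fix $A\in\Mc{B}(X)$ with $\nu(A)=0$; since each $a_i>0$, this forces $\nu_{\left\langle u_i\right\rangle}(A)=0$ for all $i$. Using density of $\{u_i\}_{i\ge1}$ in $(\Mc{F},\Mc{E}_1)$, I pick $u_{i_k}$ with $\Mc{E}(u-u_{i_k})\to0$. By \eqref{condmeasclk} the functional $\nu_{\left\langle\cdot\right\rangle}(A)^{1/p}$ is a seminorm on $\Mc{F}$, so the triangle inequality gives $\nu_{\left\langle u\right\rangle}(A)^{1/p}\le\nu_{\left\langle u_{i_k}\right\rangle}(A)^{1/p}+\nu_{\left\langle u-u_{i_k}\right\rangle}(A)^{1/p}$; here the first summand vanishes, and the second is at most $\nu_{\left\langle u-u_{i_k}\right\rangle}(X)^{1/p}=\Mc{E}(u-u_{i_k})^{1/p}$ by monotonicity of the positive measure $\nu_{\left\langle u-u_{i_k}\right\rangle}$ and \eqref{condmeasene}. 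Letting $k\to\infty$ yields $\nu_{\left\langle u\right\rangle}(A)=0$, i.e.\ $\nu_{\left\langle u\right\rangle}\ll\nu$, which completes the verification.

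For the parenthetical claim I would take $a_i=2^{-i}\Mc{E}(u_i)^{-1}$ whenever $\Mc{E}(u_i)>0$ (and, say, $a_i=2^{-i}$ otherwise, in which case $a_i\nu_{\left\langle u_i\right\rangle}$ is the zero measure by \eqref{condmeasene}); then $\sum_{i\ge1}a_i\Mc{E}(u_i)\le1<\infty$, and the statement follows from the general case.

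I do not anticipate a genuine obstacle here: the only point needing care is passing from the dense sequence to $u$, which is handled at once by the seminorm property in \eqref{condmeasclk} together with the uniform bound $\nu_{\left\langle v\right\rangle}(A)\le\nu_{\left\langle v\right\rangle}(X)=\Mc{E}(v)$ coming from \eqref{condmeasene}; everything else is formal bookkeeping with countable sums of positive measures.
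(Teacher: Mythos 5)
Your argument is correct and is essentially the standard proof that the paper invokes by citation: the paper's own proof simply defers to \cite[Lemma 9.20(b)]{MS}, whose argument is exactly this combination of finiteness of $\sum_{i\ge1}a_i\Mc{E}(u_i)$, the trivial verification of minimality, and the passage from the dense sequence to general $u\in\Mc{F}$ via the triangle inequality for $\nu_{\langle\cdot\rangle}(A)^{1/p}$ together with $\nu_{\langle v\rangle}(A)\le\Mc{E}(v)$. Your handling of the degenerate case $\Mc{E}(u_i)=0$ in the parenthetical claim is a reasonable and harmless clarification.
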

\begin{proof}
	The same argument as in \cite[Lemma 9.20(b)]{MS} applies (the ``self-similarity'' assumption in \cite[Theorem 1.1]{MS} was not used there).
\end{proof}

We fix a finite minimal energy-dominant measure $\mu$ of $(\Mc{E},\Mc{F})$ for the rest of this section.
Note that $\mu_{\langle f;g\rangle}\ll \mu$ by \cite[(4.8) in Theorem 4.5]{KSdiff}.
We denote the Radon--Nikodym derivative of $\mu_{\langle f\rangle}$ (resp. $\mu_{\langle f;g\rangle}$) with respect to $\mu$ by $\frac{\der \mu_{\langle f\rangle}}{\der \mu}$ (resp. $\frac{\der \mu_{\langle f;g\rangle}}{\der \mu}$).
By \cite[Theorem 4.5]{KSdiff} (and the uniqueness of Radon--Nikodym derivatives), we have
\begin{equation}\label{eqemlinear}
	\frac{\der\mu_{\langle f;ag+bh\rangle}}{\der\mu}
	=
	a\,\frac{\der\mu_{\langle f;g\rangle}}{\der\mu}
	+b\,\frac{\der\mu_{\langle f;h\rangle}}{\der\mu}
	\qquad \mu\text{-a.e.},
\end{equation}
for any $a,b\in\Mb{R}$ and $f,g,h\in\Mc{F}$.

\begin{lem}\label{lemconti}
	For any $f\in\Mc{F}$, the maps
	\[
	(\Mc{F},\|\cdot\|_{\Mc{E}_1})\ni g\longmapsto \frac{\der\mu_{\langle f;g\rangle}}{\der\mu}\in L^1(\mu)
	\quad\text{and}\quad
	(\Mc{F},\|\cdot\|_{\Mc{E}_1})\ni g\longmapsto \frac{\der\mu_{\langle g;f\rangle}}{\der\mu}\in L^1(\mu)
	\]
	are continuous.
\end{lem}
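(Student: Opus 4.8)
The plan is to handle the two maps separately, both starting from a Cauchy--Schwarz-type bound for the signed measures. Write $N_A:=\nu_{\langle\cdot\rangle}(A)^{1/p}$ for $A\in\Mc{B}(X)$; by \eqref{condmeasclk} this is a seminorm on $\Mc{F}$ satisfying the $p$-Clarkson inequalities, and $t\mapsto N_A(u+tv)^p=\nu_{\langle u+tv\rangle}(A)$ is differentiable with $\tfrac1p\tfrac{\der}{\der t}N_A(u+tv)^p\big|_{t=0}=\nu_{\langle u;v\rangle}(A)$. Comparing the first-order expansions of $N_A(u+tv)^p$ and of $(N_A(u)+|t|N_A(v))^p$ as $t\downarrow0$ and as $t\uparrow0$ (or invoking \cite[Theorem 4.5]{KSdiff} for $N_A$) gives $|\nu_{\langle u;v\rangle}(A)|\le N_A(u)^{p-1}N_A(v)$; partitioning $A$ into Borel pieces and applying H\"older's inequality for sums upgrades this to $|\nu_{\langle u;v\rangle}|(A)\le\nu_{\langle u\rangle}(A)^{(p-1)/p}\nu_{\langle v\rangle}(A)^{1/p}$ for the total variation. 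Using \eqref{condmeasene} and $\frac{\der|\mu|}{\der\nu}=\bigl|\frac{\der\mu}{\der\nu}\bigr|$, we obtain in particular $\bigl\|\tfrac{\der\nu_{\langle u;v\rangle}}{\der\nu}\bigr\|_{L^1(\nu)}=|\nu_{\langle u;v\rangle}|(X)\le\Mc{E}(u)^{(p-1)/p}\Mc{E}(v)^{1/p}$.

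For $u\mapsto\frac{\der\nu_{\langle v;u\rangle}}{\der\nu}$, linearity in the right slot does everything: by \eqref{eqemlinear}, $\frac{\der\nu_{\langle v;u_1\rangle}}{\der\nu}-\frac{\der\nu_{\langle v;u_2\rangle}}{\der\nu}=\frac{\der\nu_{\langle v;u_1-u_2\rangle}}{\der\nu}$ $\nu$-a.e., and the bound above gives $\bigl\|\frac{\der\nu_{\langle v;u_1\rangle}}{\der\nu}-\frac{\der\nu_{\langle v;u_2\rangle}}{\der\nu}\bigr\|_{L^1(\nu)}\le\Mc{E}(v)^{(p-1)/p}\Mc{E}(u_1-u_2)^{1/p}$, i.e.\ $\tfrac1p$-H\"older continuity on $(\Mc{F},\Mc{E}_1)$.

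For $u\mapsto\frac{\der\nu_{\langle u;v\rangle}}{\der\nu}$ the non-linearity of $u\mapsto\nu_{\langle u;v\rangle}$ makes this step the heart of the proof. I would use the modulus of continuity of the derivative of a $p$-Clarkson seminorm, applied to each $N_A$: from \cite{KSdiff} (or re-derived from the $p$-Clarkson inequalities, see below) one has, for all $u_1,u_2,v\in\Mc{F}$ and $A\in\Mc{B}(X)$,
\[
|\nu_{\langle u_1;v\rangle}(A)-\nu_{\langle u_2;v\rangle}(A)|\le C_p\bigl(\nu_{\langle u_1\rangle}(A)^{1/p}+\nu_{\langle u_2\rangle}(A)^{1/p}\bigr)^{(p-2)^+}\nu_{\langle u_1-u_2\rangle}(A)^{\theta_p/p}\,\nu_{\langle v\rangle}(A)^{1/p},
\]
where $\theta_p=1$ if $p\ge2$ and $\theta_p=p-1$ if $1<p\le2$, and where the exponents of the measures add up to $1$. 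Given $u_n\to u$ in $(\Mc{F},\Mc{E}_1)$, take a finite Borel partition $\{A_i\}$ of $X$, apply this on each $A_i$, and sum: the generalized H\"older inequality for sums (with exponents $\tfrac{p}{p-2},p,p$ when $p>2$ and $\tfrac{p}{p-1},p$ when $1<p\le2$) together with the $\ell^p$-triangle inequality $\sum_i(a_i+b_i)^p\le\bigl((\sum_ia_i^p)^{1/p}+(\sum_ib_i^p)^{1/p}\bigr)^p$ applied to $a_i=\nu_{\langle u_n\rangle}(A_i)^{1/p}$, $b_i=\nu_{\langle u\rangle}(A_i)^{1/p}$ yields
\[
\sum_i|\nu_{\langle u_n;v\rangle}(A_i)-\nu_{\langle u;v\rangle}(A_i)|\le C_p\bigl(\Mc{E}(u_n)^{1/p}+\Mc{E}(u)^{1/p}\bigr)^{(p-2)^+}\Mc{E}(u_n-u)^{\theta_p/p}\Mc{E}(v)^{1/p}.
\]
Taking the supremum over all partitions turns the left-hand side into $|\nu_{\langle u_n;v\rangle}-\nu_{\langle u;v\rangle}|(X)=\bigl\|\frac{\der\nu_{\langle u_n;v\rangle}}{\der\nu}-\frac{\der\nu_{\langle u;v\rangle}}{\der\nu}\bigr\|_{L^1(\nu)}$; since $\Mc{E}(u_n)$ stays bounded and $\Mc{E}(u_n-u)\to0$, this tends to $0$, which proves continuity (indeed H\"older continuity).

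The main obstacle is the set-wise modulus of continuity in the previous paragraph, uniform over all $A\in\Mc{B}(X)$: qualitative continuity of each $u\mapsto\nu_{\langle u;v\rangle}(A)$ would not suffice, because $L^1(\nu)$-convergence of the densities requires control simultaneously over all Borel sets, and this is exactly where the $p$-Clarkson (smoothness) inequality for $\nu_{\langle\cdot\rangle}(A)^{1/p}$ is essential; one must also verify that the estimate has the correct homogeneity so that, after summing over a partition and applying H\"older, the bound depends only on $\Mc{E}(u_n),\Mc{E}(u),\Mc{E}(u_n-u),\Mc{E}(v)$. If the precise modulus is not literally available in \cite{KSdiff} for a seminorm not known to generate a complete space, it can be produced from scratch using only differentiability and \eqref{eqCLK2}/\eqref{eqCLK3}: sandwich $\nu_{\langle u_1;v\rangle}(A)-\nu_{\langle u_2;v\rangle}(A)$ between one-sided difference quotients $\tfrac1{pt}\bigl(\nu_{\langle u_i\pm tv\rangle}(A)-\nu_{\langle u_i\rangle}(A)\bigr)$ for $t>0$, use the triangle inequality for $N_A$ to replace $\nu_{\langle u_1\pm tv\rangle}(A)^{1/p}$ by $\nu_{\langle u_2\pm tv\rangle}(A)^{1/p}\pm\nu_{\langle u_1-u_2\rangle}(A)^{1/p}$, bound $\nu_{\langle u_2+tv\rangle}(A)^p+\nu_{\langle u_2-tv\rangle}(A)^p-2\nu_{\langle u_2\rangle}(A)^p$ by the smoothness inequality, and optimize over $t$.
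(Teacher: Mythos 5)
Your proposal is correct and rests on the same key ingredient as the paper's proof, namely the H\"older-type modulus of continuity for $u\mapsto\nu_{\langle u;v\rangle}(A)$ coming from the $p$-Clarkson inequalities for $\nu_{\langle\cdot\rangle}(A)^{1/p}$ (i.e.\ \cite[Theorem 4.5]{KSdiff} applied to the measures rather than to $\Mc{E}$ alone). The only differences are cosmetic: the paper converts the set-wise bound into an $L^1(\nu)$ bound by evaluating on the single partition $\{A,A^c\}$ determined by the sign of the difference of densities, whereas you take a supremum over all finite Borel partitions with a generalized H\"older inequality, and you handle the second map via linearity in the second slot instead of the symmetric estimate from the same theorem; both routes are valid.
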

\begin{proof}
	Fix $g_1,g_2,f\in\Mc{F}$ and set
	\[
	A:=\left\{\frac{\der\mu_{\langle g_1;f\rangle}}{\der\mu}-\frac{\der\mu_{\langle g_2;f\rangle}}{\der\mu}\ge 0\right\}.
	\]
	Then, by \cite[Theorem 4.5]{KSdiff},
	\begin{align*}
		&\int_X \left|\frac{\der\mu_{\langle g_1;f\rangle}}{\der\mu}-\frac{\der\mu_{\langle g_2;f\rangle}}{\der\mu}\right|\der\mu\\
		=&\int_A \left(\der\mu_{\langle g_1;f\rangle}-\der\mu_{\langle g_2;f\rangle}\right)
		-\int_{X\setminus A} \left(\der\mu_{\langle g_1;f\rangle}-\der\mu_{\langle g_2;f\rangle}\right)\\
		\le& \left|\mu_{\langle g_1;f\rangle}(A)-\mu_{\langle g_2;f\rangle}(A)\right|
		+\left|\mu_{\langle g_1;f\rangle}(X\setminus A)-\mu_{\langle g_2;f\rangle}(X\setminus A)\right|\\
		\le& 2C_p\left(\Mc{E}(g_1)\vee \Mc{E}(g_2)\right)^{\frac{p-1-\alpha_p}{p}}
		\Mc{E}(g_1-g_2)^{\frac{\alpha_p}{p}}\Mc{E}(f)^{\frac{1}{p}},
	\end{align*}
	where $\alpha_p=\frac{1}{p}\wedge\frac{p-1}{p}$ and $C_p$ are the constants in \cite[Theorem 4.5]{KSdiff}.
	This implies that $g\mapsto \frac{\der\mu_{\langle g;f\rangle}}{\der\mu}$ is continuous. The continuity of
	$g\mapsto \frac{\der\mu_{\langle f;g\rangle}}{\der\mu}$ follows in the same way.
\end{proof}

\begin{thm}[Chain rule]\label{thmchain}
	Let $f,g\in\core$, and let $\varphi\in C(\Mb{R})$ be a piecewise $C^1$ function with $\varphi(0)=0$.
	Then $\varphi\circ f\in\core$ and
	\[
	\frac{\der \mu_{\langle \varphi\circ f;g \rangle}}{\der\mu}
	=\sgn(\varphi'\circ f)\,|\varphi'\circ f|^{p-1}\,\frac{\der\mu_{\langle f;g\rangle}}{\der\mu},
	\quad
	\frac{\der\mu_{\langle g;\varphi\circ f\rangle}}{\der\mu}
	=(\varphi'\circ f)\,\frac{\der\mu_{\langle g;f\rangle}}{\der\mu}
	\quad \mu\text{-a.e.},
	\]
	where we set $\varphi'\circ f(x):=0$ for $x\in f^{-1}\!\left(\{t\in\Mb{R}\mid \varphi'(t)\text{ does not exist}\}\right)$.
\end{thm}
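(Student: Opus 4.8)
The plan is to prove both identities first for piecewise affine $\varphi$ by a localization argument, and then to obtain the general piecewise $C^1$ case by approximation; establishing that the approximants converge in $(\Mc{F},\Mc{E}_1)$ will be the main work. Since the theorem is stated directly in terms of Radon--Nikodym derivatives, no preliminary reduction to special $A$ is needed. I would first record two homogeneity identities: from \eqref{condmeasclk} one has $\nu_{\langle cu\rangle}(A)=|c|^p\nu_{\langle u\rangle}(A)$, so differentiating $t\mapsto\nu_{\langle cu+tv\rangle}(A)=|c|^p\nu_{\langle u+(t/c)v\rangle}(A)$ at $t=0$ (for $c\ne0$) and using \eqref{eqemlinear} gives, for $c\in\Mb{R}$ and $u,v\in\Mc{F}$,
\[\frac{\der\nu_{\langle cu;v\rangle}}{\der\nu}=\sgn(c)|c|^{p-1}\frac{\der\nu_{\langle u;v\rangle}}{\der\nu},\qquad\frac{\der\nu_{\langle u;cv\rangle}}{\der\nu}=c\,\frac{\der\nu_{\langle u;v\rangle}}{\der\nu}\qquad\nu\text{-a.e.},\]
where the right side of the first is read as $0$ when $c=0$ (here $p>1$ is used, via $\tfrac{\der}{\der t}|t|^p|_{t=0}=0$).

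Next, for $\varphi$ piecewise affine with $\varphi(0)=0$, say $\varphi(t)=c_it+d_i$ on $[a_{i-1},a_i]$ with $a_0<\dots<a_n$ and $a_0\le f\le a_n$, I would partition $X$ into the finitely many Borel sets $f^{-1}\bigl((a_{i-1},a_i)\bigr)$ and $f^{-1}(\{a_i\})$. On any Borel $A\subset f^{-1}\bigl([a_{i-1},a_i]\bigr)$ the function $\varphi\circ f-c_if$ is the constant $d_i$, so Lemma \ref{lemsmwloc}~(2) gives $\nu_{\langle\varphi\circ f;g\rangle}(A)=\nu_{\langle c_if;g\rangle}(A)$ and $\nu_{\langle g;\varphi\circ f\rangle}(A)=\nu_{\langle g;c_if\rangle}(A)$, and combining with the homogeneity identities yields both formulas $\nu$-a.e.\ on $f^{-1}\bigl((a_{i-1},a_i)\bigr)$. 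On $f^{-1}(\{a_i\})$ both $f$ and $\varphi\circ f$ are constant, so by Lemma \ref{lemsmwloc}~(1) all four derivatives $\frac{\der\nu_{\langle f;g\rangle}}{\der\nu}$, $\frac{\der\nu_{\langle g;f\rangle}}{\der\nu}$, $\frac{\der\nu_{\langle\varphi\circ f;g\rangle}}{\der\nu}$, $\frac{\der\nu_{\langle g;\varphi\circ f\rangle}}{\der\nu}$ vanish $\nu$-a.e.\ there, consistently with the convention $\varphi'\circ f:=0$ on that set; gluing over the partition proves the theorem for piecewise affine $\varphi$.

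For general piecewise $C^1$ $\varphi$ (with $\varphi(0)=0$, so $\varphi\circ f\in\core$ by Corollary \ref{corincl}~(3)), I would set $K=\|f\|_\infty$, let $b_1<\dots<b_N$ be the break points of $\varphi$ in $[-K,K]$, and take $\varphi_n$ to be the continuous piecewise affine function interpolating $\varphi$ at every point of $\bigl(2^{-n}\Mb{Z}\cap[-K,K]\bigr)\cup\{-K,K,0,b_1,\dots,b_N\}$, extended affinely outside $[-K,K]$. Then $\varphi_n(0)=0$, $\varphi_n\circ f\in\core$, $\|\varphi_n-\varphi\|_{\infty,[-K,K]}\to0$, the slopes of $\varphi_n$ are bounded by $L:=\sup_{[-K,K]}|\varphi'|+1$, and by the mean value theorem and uniform continuity of $\varphi'$ on each $C^1$-piece, $\varphi_n'(t)\to\varphi'(t)$ at every $t\in[-K,K]$ that is neither a break point nor a dyadic rational. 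The crux is $\varphi_n\circ f\to\varphi\circ f$ in $(\Mc{F},\Mc{E}_1)$: convergence in $L^p(\Mf{m})$ is immediate (compact support, uniform bound), and for the energy I would write $\phi:=\varphi-\varphi_n$, let $t_0<t_1<\dots$ enumerate the nodes, and set $\psi_k:=\phi\cdot\indi_{[t_k,t_{k+1}]}$, which is continuous, locally Lipschitz and vanishes at $0$ since $\phi$ vanishes at every node, so that $\phi\circ f=\sum_k\psi_k\circ f$ is a finite sum in $\core$. Since each $[t_k,t_{k+1}]$ lies in a single $C^1$-piece, $\LIP(\psi_k)\le\omega(2^{-n})$ for the common modulus of continuity $\omega$ of $\varphi'$ on the pieces; and since $\supp[\psi_k\circ f]\subset f^{-1}\bigl([t_k,t_{k+1}]\bigr)$, the supports of $\psi_k\circ f$ and $\psi_{k'}\circ f$ are disjoint for $|k-k'|\ge2$. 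Hence \eqref{defstrloc} (splitting even from odd indices), Corollary \ref{corincl}~(3) (writing $\psi_k\circ f=\eta_k\circ(\Mc{C}_{t_k}^{t_{k+1}}\circ f)$ for a suitable $\eta_k$ with $\eta_k(0)=0$ and $\LIP(\eta_k)=\LIP(\psi_k)$) and Lemma \ref{lemfold} applied to $\varphi=\id$ give
\[\Mc{E}(\phi\circ f)^{1/p}\le 2\Bigl(\sum_k\Mc{E}(\psi_k\circ f)\Bigr)^{1/p}\le 2\,\omega(2^{-n})\Bigl(\sum_k\Mc{E}(\Mc{C}_{t_k}^{t_{k+1}}\circ f)\Bigr)^{1/p}=2\,\omega(2^{-n})\,\Mc{E}(f)^{1/p}\longrightarrow 0.\]

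To finish, I would pass to the limit. Lemma \ref{lemconti} gives $\frac{\der\nu_{\langle\varphi_n\circ f;g\rangle}}{\der\nu}\to\frac{\der\nu_{\langle\varphi\circ f;g\rangle}}{\der\nu}$ and $\frac{\der\nu_{\langle g;\varphi_n\circ f\rangle}}{\der\nu}\to\frac{\der\nu_{\langle g;\varphi\circ f\rangle}}{\der\nu}$ in $L^1(\nu)$. On the other side, let $D$ be the $f$-preimage of the (countable) union of the dyadic rationals and $\{b_1,\dots,b_N\}$; by Lemma \ref{lemsmwloc}~(1) applied to each level set, $\frac{\der\nu_{\langle f;g\rangle}}{\der\nu}=\frac{\der\nu_{\langle g;f\rangle}}{\der\nu}=0$ $\nu$-a.e.\ on $D$, while off $D$ one has $f(x)\notin\{b_j\}$ and $f(x)$ is never a node of any $\varphi_n$, so $\varphi_n'(f(x))\to\varphi'(f(x))$. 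Hence the right-hand sides for $\varphi_n$ converge $\nu$-a.e.\ to those for $\varphi$ and are dominated in $L^1(\nu)$ by $L^{p-1}\bigl|\frac{\der\nu_{\langle f;g\rangle}}{\der\nu}\bigr|$ and $L\bigl|\frac{\der\nu_{\langle g;f\rangle}}{\der\nu}\bigr|$ respectively; dominated convergence then yields the two identities. The hard part is precisely the $(\Mc{F},\Mc{E}_1)$-convergence of $\varphi_n\circ f$: because $\LIP(\varphi_n)$ need not be small, Corollary \ref{corincl}~(3) cannot be applied globally, and it is the decomposition into well-separated bumps $\psi_k\circ f$ together with the telescoping identity $\sum_k\Mc{E}(\Mc{C}_{t_k}^{t_{k+1}}\circ f)=\Mc{E}(f)$ that makes the estimate work.
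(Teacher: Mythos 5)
Your proposal is correct, and the piecewise-affine case matches the paper's argument almost verbatim (localization via Lemma \ref{lemsmwloc} on the preimages of the affine pieces and of the break points, plus the homogeneity $\frac{\der\nu_{\langle cu;v\rangle}}{\der\nu}=\sgn(c)|c|^{p-1}\frac{\der\nu_{\langle u;v\rangle}}{\der\nu}$ --- which you derive from \eqref{condmeasclk} and \eqref{eqemlinear} where the paper cites \cite[Theorem 4.5]{KSdiff}; both are legitimate). Where you genuinely diverge is the approximation step. The paper only observes that $\Mc{E}_1(\varphi_n\circ f)$ is bounded, invokes Banach--Alaoglu and the Mazur lemma to extract convex combinations $\psi_k=\sum_i\lambda_{k,i}\varphi_i\circ f$ converging strongly in $\Mc{E}_1$, identifies the limit with $\varphi\circ f$ via an a.e.\ subsequence, and then passes to the limit with Lemma \ref{lemconti}; the convex combinations are harmless on the right-hand side because the slopes still converge off the exceptional set. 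You instead prove \emph{direct} strong convergence $\varphi_n\circ f\to\varphi\circ f$ in $(\Mc{F},\Mc{E}_1)$ by decomposing $(\varphi-\varphi_n)\circ f$ into bumps $\psi_k\circ f$ supported on $f^{-1}([t_k,t_{k+1}])$, each with Lipschitz constant $\le\omega(2^{-n})$ by the mean value theorem, and combining \eqref{defstrloc} (even/odd splitting), Corollary \ref{corincl}~(3) and the telescoping identity $\sum_k\Mc{E}(\Mc{C}_{t_k}^{t_{k+1}}\circ f)=\Mc{E}(f)$ from Lemma \ref{lemfold} to get $\Mc{E}((\varphi-\varphi_n)\circ f)^{1/p}\le 2\omega(2^{-n})\Mc{E}(f)^{1/p}$. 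This is a clean and correct alternative that avoids Mazur entirely; its cost is that it genuinely uses the uniform continuity of $\varphi'$ on the pieces, so unlike the paper's compactness argument it does not transfer to the merely Lipschitz case of Theorem \ref{thmchain3}. Two trivial points to tidy: your exceptional set $D$ should also contain $f^{-1}(\{\pm\|f\|_\infty\})$ (these are permanent nodes of every $\varphi_n$, and $\frac{\der\nu_{\langle f;g\rangle}}{\der\nu}$ vanishes there a.e.\ by Lemma \ref{lemsmwloc}~(1) anyway), and the $c=0$ reading of your first homogeneity identity should be stated as the vanishing of $\frac{\der}{\der t}|t|^p\nu_{\langle v\rangle}(A)$ at $t=0$, exactly as you note.
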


\begin{proof}
	Note that $\varphi\circ f\in\core$ by Corollary \ref{corincl}~(3).
	We prove only the first identity; the second one is proved similarly.
	
	Assume first that $\varphi$ is piecewise linear. Choose $a_0,\dots,a_n\in\Mb{R}$ such that
	$-\|f\|_\infty=a_0<a_1<\cdots<a_n=\|f\|_\infty$ and $\varphi|_{[a_{i-1},a_i]}$ is affine for $1\le i\le n$.
	Then for any $A\in\Mc{B}(X)$,
	\begin{itemize}
		\item $\mu_{\left\langle \varphi\circ f;g\right\rangle }(A\cap f^{-1}(\{a_i\}))=\mu_{\left\langle f;g\right\rangle}(A\cap f^{-1}(\{a_i\}))=0$,
		\item $\begin{multlined}[t][.92\textwidth]
			\mu_{\left\langle \varphi\circ f;g\right\rangle}(A\cap f^{-1}((a_{i-1},a_i)))=\mu_{\left\langle (\varphi'\circ f)f;g\right\rangle}(A\cap f^{-1}((a_{i-1},a_i)))\\
			=\sgn(\varphi'\circ f)|\varphi'\circ f|^{p-1}\mu_{\left\langle f;g\right\rangle}(A\cap f^{-1}((a_{i-1},a_i))),
		\end{multlined}$
	\end{itemize}
	by Lemma \ref{lemsmwloc} and \cite[Theorem 4.5]{KSdiff}. These prove the claim in this case.\par
	
	Next let $\varphi$ be general. We may assume
	$\LIP(\varphi)=\LIP(\varphi|_{[-\|f\|_\infty,\|f\|_\infty]})<\infty$ without loss of generality.
	Let $\{\varphi_n\}_{n\ge 1}$ be a sequence of piecewise linear functions such that
	$\varphi_n(a)=\varphi(a)$ for every $a\in 2^{-n}\Mb{Z}$.
	Then $\varphi_n\to\varphi$ pointwise on $\Mb{R}$, $\LIP(\varphi_n)\le \LIP(\varphi)$, and
	$\varphi_n'\to \varphi'$ pointwise on
	\[
	\Mb{R}\setminus\Bigl(\bigcup_{n\ge1}2^{-n}\Mb{Z}\ \cup\ \{t\in\Mb{R}\mid \varphi'(t)\text{ does not exist}\}\Bigr),
	\]
	by the mean value theorem.
	Since $\varphi_n\circ f\in\core$ and
	\[
	\|\varphi_n\circ f\|_{\Mc{E}_1}
	\le \LIP(\varphi)\left(\Mc{E}(f)+\|f\|_\infty^p\,\Mf{m}(\supp(f))\right)^{1/p}<\infty
	\]
	(note that $|\varphi_n\circ f|\le \LIP(\varphi)\,|f|$),
	by \cite[Theorem 2.4.1]{HKST} and the Mazur lemma (see, e.g., \cite[Section 2.3]{HKST})
	there exist sequences $\{n_k\}_{k\ge1}$, $\{N_k\}_{k\ge1}$, and $\{\lambda_{k,i}\}_{k\ge1,\,n_k\le i\le N_k}$ such that
	$
	k\le n_k\le N_k,\ 0\le \lambda_{k,i},\ \sum_{i=n_k}^{N_k}\lambda_{k,i}=1$,
	and 
	$\psi_k\circ f$
	converge in $\Mc{E}_1$-norm, fore the convex combination $
	\psi_k:=\sum_{i=n_k}^{N_k}\lambda_{k,i}\,\varphi_i
	$.
	Let $h$ be the $\Mc{E}_1$-limit of $\{\psi_k\circ f\}_{k\ge1}$. Taking a subsequence if necessary, we may assume
	$\psi_k\circ f\to h$ $\Mf{m}$-a.e. Since $\varphi_n\circ f\to \varphi\circ f$ pointwise, we conclude that $h=\varphi\circ f$.
	
	Therefore, by Lemma \ref{lemconti},
	\[
	\frac{\der \mu_{\langle \varphi\circ f;g\rangle}}{\der\mu}
	=\lim_{k\to\infty}\frac{\der\mu_{\langle \psi_k\circ f;g\rangle}}{\der\mu}
	=\lim_{k\to\infty}\sgn(\psi_k'\circ f)\,|\psi_k'\circ f|^{p-1}\,\frac{\der \mu_{\langle f;g\rangle}}{\der \mu}
	\quad\text{in }L^1(\mu).
	\]
	Moreover,
	\[
	\frac{\der \mu_{\langle f;g\rangle}}{\der \mu}=0\quad \mu\text{-a.e.\ on }
	f^{-1}\!\left(\bigcup_{n\ge 1}2^{-n}\Mb{Z}\ \cup\ \{t\in\Mb{R}\mid \varphi'(t)\text{ does not exist}\}\right)
	\]
	by Lemma \ref{lemsmwloc}~(1). Hence
	\begin{align}\label{eqchainconv2}
		\lim_{k\to\infty}\sgn(\psi_k'\circ f)\,|\psi_k'\circ f|^{p-1}\,\frac{\der \mu_{\langle f;g\rangle}}{\der \mu}
		&=\lim_{k\to\infty}\sgn(\varphi_k'\circ f)\,|\varphi_k'\circ f|^{p-1}\,\frac{\der \mu_{\langle f;g\rangle}}{\der \mu}\notag\\
		&=\sgn(\varphi'\circ f)\,|\varphi'\circ f|^{p-1}\,\frac{\der \mu_{\langle f;g\rangle}}{\der \mu}
		\qquad \mu\text{-a.e.}
	\end{align}
	Since $|\psi_k'\circ f|^{p-1}\le \LIP(\varphi)^{p-1}\,\indi_{\supp(f)}\in L^1(\mu)$,
	\eqref{eqchainconv2} also holds in $L^1(\mu)$ by the dominated convergence theorem.
	This proves
	\[
	\frac{\der  \mu_{\langle \varphi\circ f;g \rangle}}{\der \mu}
	=\sgn(\varphi'\circ f)\,|\varphi'\circ f|^{p-1}\,\frac{\der \mu_{\langle f;g\rangle}}{\der \mu}
	\quad \mu\text{-a.e.}
	\]
\end{proof}

\begin{rem}
	The proof of Theorem \ref{thmchain} is inspired by \cite[Theorem 4.1.4]{GP} (see also \cite[Theorem 2.2.6]{Gig18}; cf.\ \cite[Theorems 4.15 and 5.11]{KSdiff}).
	In contrast to \cite[Proposition 4.1]{BV}, \cite[Theorem 7.4]{Shi}, and \cite[Theorem 8.17]{MS}, this proof does not require condition (H6) in \cite{BV}, which corresponds to \ref{defMarkov} with $\Mc{E}(\cdot)$ replaced by $\mu_{\langle \cdot\rangle}(A)$ $(A\in\Mc{B}(X))$.
\end{rem}

\begin{cor}\label{corLeibniz}
	For any $f,g,h\in\core$,
	\[
	\frac{\der \mu_{\langle f;gh\rangle}}{\der \mu}
	=g\,\frac{\der \mu_{\langle f;h \rangle}}{\der \mu}
	+h\,\frac{\der \mu_{\langle f;g\rangle}}{\der \mu}
	\quad \mu\text{-a.e.}
	\]
\end{cor}
\begin{proof}
	Applying Theorem \ref{thmchain} to $\varphi(t)=t^2$, we obtain
	\begin{align*}
		\frac{1}{2}\frac{\der\mu_{\langle f;(g+h)^2\rangle}}{\der\mu}
		&=\frac{1}{2}\left(
		\frac{\der\mu_{\langle f;g^2\rangle}}{\der\mu}
		+\frac{\der\mu_{\langle f;2gh\rangle}}{\der\mu}
		+\frac{\der\mu_{\langle f;h^2\rangle}}{\der\mu}
		\right)\\
		&=g\,\frac{\der\mu_{\langle f;g\rangle}}{\der\mu}
		+\frac{\der\mu_{\langle f;gh\rangle}}{\der\mu}
		+h\,\frac{\der\mu_{\langle f;h\rangle}}{\der\mu},
		\shortintertext{and}
		\frac{1}{2}\frac{\der\mu_{\langle f;(g+h)^2\rangle}}{\der\mu}
		&=(g+h)\,\frac{\der\mu_{\langle f;g+h\rangle}}{\der\mu}\\
		&=g\,\frac{\der\mu_{\langle f;g\rangle}}{\der\mu}
		+g\,\frac{\der\mu_{\langle f;h\rangle}}{\der\mu}
		+h\,\frac{\der\mu_{\langle f;g\rangle}}{\der\mu}
		+h\,\frac{\der\mu_{\langle f;h\rangle}}{\der\mu}
	\end{align*}
	$\mu$-a.e. Comparing these two identities yields the desired formula.
\end{proof}

\begin{proof}[Proof of Theorem \ref{thmunique}]
	The existence of energy measures follow from Corollaries \ref{corexist} and \ref{corLeibniz}, together with Theorem \ref{thmchain}. Theorem \ref{thmdomination} leads to the uniqueness. (Note that conditions \ref{condmeaspm} and \ref{condmeaslim} immediately follow from \ref{condmeasnorm} together with \ref{condmeasene}.)
\end{proof}

%\begin{rem}
%	The ``$p$-energy measures'' considered in \cite{AB,AES,CheGYZ,KSKS,KSdiff,KS25,KSx,MS,Shi} coincide with the canonical $p$-energy measures.
%	We will verify this identification in the case of \emph{Korevaar--Schoen-type $p$-energy forms} in the next section.
%\end{rem}

For the remainder of this section, we prove some fundamental properties of energy measures that follow from Theorems \ref{thmunique} and \ref{thmchain}.
The first is an identity corresponding to \eqref{defifunct4em}.

\begin{cor}\label{corfunct}
	For any $f,g\in\core$,
	\[
	g\,\frac{\der \mu_{\langle f\rangle}}{\der \mu}
	=\frac{\der \mu_{\langle f;fg\rangle}}{\der \mu}
	-\left(\frac{p-1}{p}\right)^{p-1}\frac{\der \mu_{\bigl\langle |f|^{\frac{p}{p-1}}; g\bigr\rangle}}{\der \mu}
	\quad \mu\text{-a.e.}
	\]
	In particular,
	\begin{equation}\label{eqfunctional}
		\int_X g\,\der\mu_{\langle f\rangle}
		=\Mc{E}(f;fg)-\left(\frac{p-1}{p}\right)^{p-1}\Mc{E}\left(|f|^{\frac{p}{p-1}};g \right),
	\end{equation}
	where
	\[
	\Mc{E}(f;g)
	=\mu_{\langle f;g\rangle}(X)
	=\frac{1}{p}\frac{\der}{\der t}\mu_{\langle f+tg\rangle}(X)\Big|_{t=0}
	=\frac{1}{p}\frac{\der}{\der t}\Mc{E}(f+tg)\Big|_{t=0}.
	\]
\end{cor}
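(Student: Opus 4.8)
The plan is to obtain both identities as an immediate consequence of the chain rule (Theorem~\ref{thmchain}) and the Leibniz rule (Corollary~\ref{corLeibniz}), applied with the single test function $\varphi(t)=|t|^{p/(p-1)}$.

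First I would check that $\varphi(t):=|t|^{p/(p-1)}$ is admissible for Theorem~\ref{thmchain}. Since $p/(p-1)>1$, $\varphi$ is continuous with $\varphi(0)=0$ and its derivative $\varphi'(t)=\frac{p}{p-1}\sgn(t)|t|^{1/(p-1)}$ exists and is continuous on all of $\Mb{R}$ (also when $p>2$, where $\varphi$ is merely $C^1$ and not $C^2$), so $\varphi$ is piecewise $C^1$; moreover $\varphi$ is locally Lipschitz with $\varphi(0)=0$, hence $|f|^{p/(p-1)}=\varphi\circ f\in\core$ by Corollary~\ref{corincl}~(3) and $fg\in\core$ by Corollary~\ref{corincl}~(4), so both mixed measures below are defined. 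Noting $\sgn(\varphi'(t))=\sgn(t)$ and $|\varphi'(t)|^{p-1}=\bigl(\tfrac{p}{p-1}\bigr)^{p-1}|t|$, so that $\sgn(f)\,|f|=f$, Theorem~\ref{thmchain} yields
\[
\frac{\der\nu_{\langle|f|^{p/(p-1)};g\rangle}}{\der\nu}=\sgn(\varphi'\circ f)\,|\varphi'\circ f|^{p-1}\,\frac{\der\nu_{\langle f;g\rangle}}{\der\nu}=\Bigl(\tfrac{p}{p-1}\Bigr)^{p-1} f\,\frac{\der\nu_{\langle f;g\rangle}}{\der\nu}\quad\nu\text{-a.e.}
\]

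Next, applying Corollary~\ref{corLeibniz} to the product $fg$ (taking the two factors to be $f$ and $g$) and using $\nu_{\langle f;f\rangle}=\nu_{\langle f\rangle}$,
\[
\frac{\der\nu_{\langle f;fg\rangle}}{\der\nu}=f\,\frac{\der\nu_{\langle f;g\rangle}}{\der\nu}+g\,\frac{\der\nu_{\langle f\rangle}}{\der\nu}\quad\nu\text{-a.e.}
\]
Subtracting $\bigl(\tfrac{p-1}{p}\bigr)^{p-1}$ times the chain-rule identity from this, the terms involving $\frac{\der\nu_{\langle f;g\rangle}}{\der\nu}$ cancel exactly because $\bigl(\tfrac{p-1}{p}\bigr)^{p-1}\bigl(\tfrac{p}{p-1}\bigr)^{p-1}=1$, and there remains $g\,\frac{\der\nu_{\langle f\rangle}}{\der\nu}$, which is the first claimed identity. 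For \eqref{eqfunctional} I would read this $\nu$-a.e.\ identity as an equality of signed Borel measures, $g\,\der\nu_{\langle f\rangle}=\der\nu_{\langle f;fg\rangle}-\bigl(\tfrac{p-1}{p}\bigr)^{p-1}\der\nu_{\langle|f|^{p/(p-1)};g\rangle}$, and integrate over $X$; since $g\in\core$ is bounded and $\nu_{\langle f\rangle}(X)=\Mc{E}(f)<\infty$ the left-hand side equals the finite quantity $\int_X g\,\der\nu_{\langle f\rangle}$, while by definition $\nu_{\langle f;fg\rangle}(X)=\Mc{E}(f;fg)$ and $\nu_{\langle|f|^{p/(p-1)};g\rangle}(X)=\Mc{E}(|f|^{p/(p-1)};g)$, which gives the stated formula (consistent with \eqref{defifunct4em} when $p=2$).

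I do not expect a serious obstacle: once Theorem~\ref{thmchain} and Corollary~\ref{corLeibniz} are available the argument is pure bookkeeping. The only points requiring a little care are confirming that $\varphi(t)=|t|^{p/(p-1)}$ genuinely satisfies the hypotheses of the chain rule for every $p>1$ (continuity of $\varphi'$ through the origin, including the case $p>2$) and keeping track of the constant $\bigl(\tfrac{p-1}{p}\bigr)^{p-1}$ together with the sign factor $\sgn(\varphi'\circ f)=\sgn(f)$, which is exactly what makes the cancellation in the final step exact.
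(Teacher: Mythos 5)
Your proposal is correct and is essentially the paper's own argument: the author likewise derives the identity by applying the chain rule (Theorem \ref{thmchain}) to $\varphi(t)=|t|^{p/(p-1)}$ (noting it satisfies the hypothesis of Corollary \ref{corincl}~(3)) and combining with the Leibniz rule (Corollary \ref{corLeibniz}), so the $f\,\frac{\der\nu_{\langle f;g\rangle}}{\der\nu}$ terms cancel via $\bigl(\tfrac{p-1}{p}\bigr)^{p-1}\bigl(\tfrac{p}{p-1}\bigr)^{p-1}=1$. Your verification that $\varphi'$ is continuous through the origin for every $p>1$ and the integration step to obtain \eqref{eqfunctional} are exactly the bookkeeping the paper leaves implicit.
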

\begin{proof}
	This follows from Theorem \ref{thmchain} and Corollary \ref{corLeibniz} (cf.\ \cite[Proposition 4.16]{KSdiff}).
\end{proof}

Similarly to the case of Dirichlet forms (cf.\ \cite[Theorem 3.2.2]{FOT}), the second identity in Theorem \ref{thmchain} extends to the case where $\varphi$ is an $n$-variable function.

\begin{thm}\label{thmchain2}
	Let $f, g_1,\dots g_n\in\core$ and let $\varphi\in C^1(\Mb{R}^n)$ satisfy $\varphi(\zero)=0$.
	Set $\Mf{g}=(g_1,\dots,g_n)$. Then
	\begin{equation}\label{eqnchain}
		\frac{\der \mu_{\langle f;\varphi\circ \Mf{g}\rangle}}{\der \mu}
		=\sum_{i=1}^n \left(\frac{\partial \varphi}{\partial x_i}\circ \Mf{g}\right)\frac{\der \mu_{\langle f;g_i\rangle}}{\der \mu}
		\quad \mu\text{-a.e.}
	\end{equation}
\end{thm}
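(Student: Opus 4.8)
The plan is to mimic the proof of Theorem \ref{thmchain}: first I would establish \eqref{eqnchain} when $\varphi$ is a polynomial, then approximate a general $\varphi\in C^1(\Mb{R}^n)$ by polynomials in the $C^1$-topology on the compact box $K:=\prod_{i=1}^n[-\|g_i\|_\infty,\|g_i\|_\infty]$ (which contains $\Mf{g}(X)$), and pass to the limit using the continuity of $u\mapsto\der\nu_{\langle f;u\rangle}/\der\nu$ from Lemma \ref{lemconti}. Throughout, $f\in\core$ is fixed as in the standing assumption of this section.

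For the polynomial case, write $\varphi=\sum_{|\alpha|\ge1}c_\alpha x^\alpha$ (legitimate since $\varphi(\zero)=0$), so that $\varphi\circ\Mf{g}=\sum_\alpha c_\alpha\Mf{g}^\alpha\in\core$ by Corollary \ref{corincl}~(4). By the linearity \eqref{eqemlinear} it suffices to handle a single monomial $\Mf{g}^\alpha=g_{i_1}g_{i_2}\cdots g_{i_m}$ (listing each $g_i$ with multiplicity $\alpha_i$, $m=|\alpha|$). An induction on $m$ via the Leibniz rule Corollary \ref{corLeibniz} (using that finite products of elements of $\core$ lie in $\core$) gives $\der\nu_{\langle f;\Mf{g}^\alpha\rangle}/\der\nu=\sum_{r=1}^m\bigl(\prod_{s\ne r}g_{i_s}\bigr)\,\der\nu_{\langle f;g_{i_r}\rangle}/\der\nu$, and collecting the $\alpha_i$ terms with $i_r=i$ turns this into $\sum_i(\partial x^\alpha/\partial x_i)(\Mf{g})\,\der\nu_{\langle f;g_i\rangle}/\der\nu$. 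Summing over $\alpha$ with the coefficients $c_\alpha$ yields \eqref{eqnchain} for polynomials, and here $f$ may be any element of $\core$.

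The crucial intermediate step will be the energy estimate
\[\Mc{E}(Q\circ\Mf{g})^{1/p}\le C_p\sum_{i=1}^n\Bigl(\sup_K\bigl|\tfrac{\partial Q}{\partial x_i}\bigr|\Bigr)\Mc{E}(g_i)^{1/p}\]
for every polynomial $Q$ with $Q(\zero)=0$, where $C_p$ is a constant as in \cite[Theorem 4.5]{KSdiff}. To prove it I would apply the polynomial case of \eqref{eqnchain} with $f=Q\circ\Mf{g}$ and $\varphi=Q$; integrating against $\nu$ and using $\nu_{\langle u;u\rangle}=\nu_{\langle u\rangle}$ and \eqref{condmeasene} gives $\Mc{E}(Q\circ\Mf{g})=\sum_i\int_X(\partial Q/\partial x_i)\circ\Mf{g}\,\der\nu_{\langle Q\circ\Mf{g};g_i\rangle}$; then bounding $|(\partial Q/\partial x_i)\circ\Mf{g}|\le\sup_K|\partial Q/\partial x_i|$, using $|\nu_{\langle u;v\rangle}|(X)\le C_p\Mc{E}(u)^{(p-1)/p}\Mc{E}(v)^{1/p}$ (which follows from \cite[Theorem 4.5]{KSdiff}, exactly as in the proof of Lemma \ref{lemconti}), and dividing by $\Mc{E}(Q\circ\Mf{g})^{(p-1)/p}$ gives the claim (trivially true if $\Mc{E}(Q\circ\Mf{g})=0$). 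I expect this to be the main obstacle: the point is to bootstrap the estimate from the already-proven polynomial chain rule together with the H\"older-type bound on signed energy measures, rather than to prove a direct multivariate ``Markov-type'' inequality for $\Mc{E}(\psi\circ\Mf{g})$.

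To conclude, given $\varphi\in C^1(\Mb{R}^n)$ with $\varphi(\zero)=0$, I would choose polynomials $\tilde P_k$ with $\tilde P_k\to\varphi$ and $\nabla\tilde P_k\to\nabla\varphi$ uniformly on $K$, and set $P_k:=\tilde P_k-\tilde P_k(\zero)$, so $P_k(\zero)=0$ and the same uniform convergences persist. Applying the estimate above to $Q=P_k-P_l$ shows $\{P_k\circ\Mf{g}\}_{k\ge1}$ is $\Mc{E}_1$-Cauchy, hence convergent in $(\Mc{F},\Mc{E}_1)$ by \eqref{defBanach}; since also $P_k\circ\Mf{g}\to\varphi\circ\Mf{g}$ uniformly on $X$ (all supports lie in the compact set $\bigcup_i\supp[g_i]$ and $\Mf{g}(X)\subset K$), hence in $L^p(\Mf{m})$, the $\Mc{E}_1$-limit is $\varphi\circ\Mf{g}$; in particular $\varphi\circ\Mf{g}\in\core$, which also makes sense of the left-hand side of \eqref{eqnchain}. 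Finally Lemma \ref{lemconti} gives $\der\nu_{\langle f;P_k\circ\Mf{g}\rangle}/\der\nu\to\der\nu_{\langle f;\varphi\circ\Mf{g}\rangle}/\der\nu$ in $L^1(\nu)$, while $\sum_i(\partial P_k/\partial x_i)\circ\Mf{g}\cdot\der\nu_{\langle f;g_i\rangle}/\der\nu\to\sum_i(\partial\varphi/\partial x_i)\circ\Mf{g}\cdot\der\nu_{\langle f;g_i\rangle}/\der\nu$ in $L^1(\nu)$ by uniform convergence of $(\partial P_k/\partial x_i)\circ\Mf{g}$ together with $\der\nu_{\langle f;g_i\rangle}/\der\nu\in L^1(\nu)$; passing to the limit in the polynomial identity yields \eqref{eqnchain} $\nu$-a.e.
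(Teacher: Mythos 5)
Your proposal is correct, and its overall architecture (polynomial case via linearity and the Leibniz rule, then $C^1$-approximation by polynomials on the box $K$, then passage to the limit via Lemma \ref{lemconti} and uniform convergence of the derivatives) is the same as the paper's. The genuine difference is in how the convergence $P_k\circ\Mf{g}\to\varphi\circ\Mf{g}$ in $(\Mc{F},\Mc{E}_1)$ is obtained. The paper extracts it softly: it notes that $\{\varphi_m\circ\Mf{g}\}$ is bounded in $\Mc{E}_1$-norm and invokes the Banach--Alaoglu theorem together with the Mazur lemma, replacing the approximating polynomials by convex combinations of a subsequence (exactly as in the proof of Theorem \ref{thmchain}). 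You instead prove the quantitative estimate $\Mc{E}(Q\circ\Mf{g})^{1/p}\le C_p\sum_i(\sup_K|\partial Q/\partial x_i|)\,\Mc{E}(g_i)^{1/p}$ by applying the already-established polynomial chain rule with $f=Q\circ\Mf{g}$, integrating against $\nu$, and using the H\"older-type bound on $\nu_{\langle u;v\rangle}$ from \cite[Theorem 4.5]{KSdiff}; this makes $\{P_k\circ\Mf{g}\}$ norm-Cauchy outright and removes any need for weak compactness or convex combinations. This bootstrap is a nice touch: the paper's appeal to ``the same argument as in the proof of Theorem \ref{thmchain}'' tacitly requires a uniform $\Mc{E}_1$-bound on $\varphi_m\circ\Mf{g}$, which for several variables does not follow directly from the single-variable contraction estimate of Corollary \ref{corincl}~(3), whereas your estimate supplies exactly such a bound (and more). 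The trade-off is that your route is slightly longer, but it is self-contained on this point and yields convergence of the original approximating sequence rather than of convex combinations of a subsequence.
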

\begin{proof}
	Fix $f,g_1,\dots,g_n\in\core$ and let $\Mc{A}\subset C^1(\Mb{R}^n)$ be the set of functions $\varphi$ such that $\varphi(\zero)=0$ and \eqref{eqnchain} holds.
	Then $\Mc{A}$ is an $\Mb{R}$-algebra by \eqref{eqemlinear}, Theorem \ref{thmchain}, and Corollary \ref{corLeibniz}.
	In particular, $\Mc{A}$ contains all polynomials $\varphi$ with $\varphi(\zero)=0$.
	
	Fix $\varphi\in C^1(\Mb{R}^n)$ with $\varphi(\zero)=0$.
	By \cite[Chapter II, Section 4.3]{CH}, there exist polynomials $\varphi_m$ $(m\ge 1)$ with $\varphi_m(\zero)=0$ such that
	$\varphi_m\to \varphi$ and $\frac{\partial \varphi_m}{\partial x_i}\to\frac{\partial \varphi}{\partial x_i}$ $(1\le i\le n)$
	uniformly on $[-\|\Mf{g}\|_{\infty},\|\Mf{g}\|_{\infty}]^n$.
	Arguing as in the proof of Theorem \ref{thmchain} (using convex combinations of a subsequence if necessary),
	we may assume $\varphi_m\circ \Mf{g}\to \varphi\circ \Mf{g}$ in $(\Mc{F},\|\cdot\|_{\Mc{E}_1})$ by Mazur's lemma.
	Hence, by Lemma \ref{lemconti} and the dominated convergence theorem,
	\begin{multline*}
		\frac{\der \mu_{\langle f;\varphi\circ \Mf{g}\rangle}}{\der \mu}
		=\lim_{m\to\infty}\frac{\der \mu_{\langle f;\varphi_m\circ \Mf{g}\rangle}}{\der \mu}
		\\=\lim_{m\to\infty}\sum_{i=1}^n \left(\frac{\partial \varphi_m}{\partial x_i}\circ \Mf{g}\right)\frac{\der \mu_{\langle f;g_i\rangle}}{\der \mu}
		=\sum_{i=1}^n \left(\frac{\partial \varphi}{\partial x_i}\circ \Mf{g}\right)\frac{\der \mu_{\langle f;g_i\rangle}}{\der \mu}
	\end{multline*}
	in $L^1(\mu)$.
\end{proof}

Theorem \ref{thmchain} admits another extension with respect to $\varphi$.

\begin{thm}\label{thmchain3}
	Theorem \ref{thmchain} remain valid if we replace ``piecewise $C^1$'' by ``locally Lipschitz''.
\end{thm}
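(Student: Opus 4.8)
The plan is to run the proof of Theorem~\ref{thmchain} almost verbatim; the only place it genuinely used ``piecewise $C^1$'' was that the set $N:=\{t\in\Mb{R}\mid\varphi'(t)\text{ does not exist}\}$ is \emph{finite}, so that $\tfrac{\der\nu_{\langle f;g\rangle}}{\der\nu}$ and $\tfrac{\der\nu_{\langle g;f\rangle}}{\der\nu}$ vanish $\nu$-a.e.\ on $f^{-1}(N)$ by Lemma~\ref{lemsmwloc}~(1). For a locally Lipschitz $\varphi$ (with $\varphi(0)=0$) Rademacher's theorem only tells us that $N$ is Lebesgue-null, so the argument goes through once we establish the following \emph{one-dimensional energy image density property}: for every $f\in\core$ and every Lebesgue-null Borel set $N\subset\Mb{R}$ one has $\nu_{\langle f\rangle}(f^{-1}(N))=0$; equivalently, $f_*\nu_{\langle f\rangle}\ll\mathrm{Leb}$ on $\Mb{R}$. (Recall $\nu_{\langle f\rangle}=\mu_{\langle f\rangle}$ by the uniqueness in Theorem~\ref{thmunique}, and from the last assertion of that theorem, $\Mc{E}(\varphi\circ f)=\int_{\Mb{R}}|\varphi'|^p\,d(f_*\nu_{\langle f\rangle})$ for every piecewise affine $\varphi$ with $\varphi(0)=0$; these two facts, together with completeness~\eqref{defBanach}, are all we shall use.)

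To prove the image density property I would write $\lambda:=f_*\nu_{\langle f\rangle}$, a finite Borel measure on $[-\|f\|_\infty,\|f\|_\infty]$, and argue by contradiction: if $\lambda\not\ll\mathrm{Leb}$, inner regularity yields a compact $B$ with $|B|=0$ and $\lambda(B)>0$. The point is to build $\core$-functions concentrating energy $\gtrsim\lambda(B)$ near $B$ while shrinking to $0$ in $L^p(\Mf{m})$, \emph{organized along a decreasing family of neighborhoods of $B$}. Choose $\delta_k\downarrow0$ so that the open $\delta_k$-neighborhoods $U_k$ of $B$ satisfy $|U_k|<2^{-k}$; then $U_1\supset U_2\supset\cdots$, $U_k\downarrow B$, and $\lambda(U_k)\downarrow\lambda(B)>0$. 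Writing each $U_k$ as a countable disjoint union of open intervals and truncating to a finite subunion $U_k^{(l_k)}$ with $\lambda(U_k)-\lambda(U_k^{(l_k)})<2^{-k}$, set $w_k:=\psi_k\circ f\in\core$, where $\psi_k(t):=\int_0^t\indi_{U_k^{(l_k)}}(s)\,ds$ is piecewise affine with $\psi_k(0)=0$ (so $w_k\in\core$ by Corollary~\ref{corincl}~(3)). Then $\|w_k\|_\infty\le|U_k^{(l_k)}|<2^{-k}$, hence $w_k\to0$ in $L^p(\Mf{m})$, while $\Mc{E}(w_k)=\lambda(U_k^{(l_k)})>\lambda(B)-2^{-k}$ by the displayed identity. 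Since $\psi_k-\psi_{k'}$ is piecewise affine with derivative in $\{-1,0,1\}$ and value $0$ at $0$, Theorem~\ref{thmunique} gives $\Mc{E}(w_k-w_{k'})=\lambda\bigl(U_k^{(l_k)}\triangle U_{k'}^{(l_{k'})}\bigr)$, and here the nesting is used: for $k<k'$ one gets $U_k^{(l_k)}\triangle U_{k'}^{(l_{k'})}\subset(U_k\setminus U_{k'})\cup(U_k\setminus U_k^{(l_k)})\cup(U_{k'}\setminus U_{k'}^{(l_{k'})})$, so $\Mc{E}(w_k-w_{k'})\le\bigl(\lambda(U_k)-\lambda(U_{k'})\bigr)+2^{-k}+2^{-k'}\to0$. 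Thus $\{w_k\}$ is $\Mc{E}_1$-Cauchy; by~\eqref{defBanach} it converges in $\Mc{E}_1$, necessarily to its $L^p(\Mf{m})$-limit $0$, whence $\Mc{E}(w_k)\to0$ --- contradicting $\Mc{E}(w_k)>\lambda(B)-2^{-k}$. Therefore $\lambda\ll\mathrm{Leb}$.

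With this in hand the proof concludes as follows. Given locally Lipschitz $\varphi$ with $\varphi(0)=0$, we may assume (as in the proof of Theorem~\ref{thmchain}) that $\LIP(\varphi)=\LIP\bigl(\varphi|_{[-\|f\|_\infty,\|f\|_\infty]}\bigr)<\infty$. The set $N=\{\varphi'\text{ does not exist}\}$ is Lebesgue-null, so $\nu_{\langle f\rangle}(f^{-1}(N))=0$ by the image density property, and hence $\tfrac{\der\nu_{\langle f;g\rangle}}{\der\nu}=\tfrac{\der\nu_{\langle g;f\rangle}}{\der\nu}=0$ $\nu$-a.e.\ on $f^{-1}(N)$ by the domination of the signed energy measures in \cite[Theorem~4.5]{KSdiff}; combined with Lemma~\ref{lemsmwloc}~(1) applied to the countable set $\Cup_{n\ge1}2^{-n}\Mb{Z}$, these derivatives vanish $\nu$-a.e.\ on $f^{-1}\bigl(\Cup_{n\ge1}2^{-n}\Mb{Z}\cup N\bigr)$. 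This is exactly the one ingredient needed to repeat the proof of Theorem~\ref{thmchain} for general $\varphi$: approximate $\varphi$ by piecewise affine $\varphi_n$ with $\varphi_n=\varphi$ on $2^{-n}\Mb{Z}$ and $\LIP(\varphi_n)\le\LIP(\varphi)$, so $\varphi_n'\to\varphi'$ pointwise off $\Cup_n2^{-n}\Mb{Z}\cup N$ by the mean value theorem, pass through convex combinations convergent in $\Mc{E}_1$ via the Banach--Alaoglu theorem and Mazur's lemma, and use Lemma~\ref{lemconti} and dominated convergence (with dominating function $\LIP(\varphi)^{p-1}\indi_{\supp[f]}\in L^1(\nu)$) to obtain the two asserted identities.

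I expect the main obstacle to be the energy image density property; the key realization is that it is forced by completeness~\eqref{defBanach} alone, provided one arranges the approximating functions $w_k$ along a \emph{decreasing} sequence of neighborhoods of $B$, so that the energy increments $\Mc{E}(w_k-w_{k'})=\lambda(U_k^{(l_k)}\triangle U_{k'}^{(l_{k'})})$ telescope to $0$ --- without nesting one only gets functions going to $0$ in $L^p(\Mf{m})$ with bounded but non-vanishing energy, which is not in itself contradictory. A secondary point to verify along the way is that the finite sets of corners of the piecewise affine functions have $\nu_{\langle f\rangle}$-null preimage under $f$, which is immediate from Lemma~\ref{lemsmwloc}~(1).
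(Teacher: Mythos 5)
Your proposal is correct, and its overall skeleton is the same as the paper's: isolate the single place where ``piecewise $C^1$'' was used (finiteness of the non-differentiability set $N$, so that Lemma \ref{lemsmwloc}~(1) kills $f^{-1}(N)$), observe that for locally Lipschitz $\varphi$ Rademacher only gives $\Mc{L}(N)=0$, reduce to the energy image density property $f_*\nu_{\langle f\rangle}\ll\Mc{L}$, and then rerun the proof of Theorem \ref{thmchain} with the enlarged exceptional set and a piecewise affine approximation with $\LIP(\varphi_n)\le\LIP(\varphi)$ and $\varphi_n'\to\varphi'$ $\Mc{L}$-a.e. The one genuine difference is how the image density property is obtained: the paper simply imports it as Theorem \ref{thmeidm}, a special case of \cite[Theorem 4.17]{KSdiff}, whereas you prove it from scratch. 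Your argument --- push $\nu_{\langle f\rangle}$ forward to $\lambda$ on $\Mb{R}$, take a compact Lebesgue-null $B$ with $\lambda(B)>0$, build $w_k=\psi_k\circ f$ from primitives of indicators of finite unions of intervals exhausting a \emph{nested} sequence of neighborhoods $U_k\downarrow B$, compute $\Mc{E}(w_k)=\lambda(U_k^{(l_k)})$ and $\Mc{E}(w_k-w_{k'})=\lambda(U_k^{(l_k)}\tr U_{k'}^{(l_{k'})})$ from the already-established piecewise affine chain rule in Theorem \ref{thmunique}, and derive a contradiction with completeness \eqref{defBanach} since $w_k\to0$ in $L^p(\Mf{m})$ while $\Mc{E}(w_k)\ge\lambda(B)-2^{-k}$ --- is sound; the nesting is indeed what makes $\{w_k\}$ $\Mc{E}_1$-Cauchy rather than merely energy-bounded, and there is no circularity since Theorems \ref{thmchain} and \ref{thmunique} are proved without the image density property. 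What your route buys is self-containedness (no appeal to \cite{KSdiff} for this step) at the cost of about a paragraph of extra argument; what the paper's route buys is brevity and consistency with its general policy of quoting the differentiability machinery of \cite{KSdiff}.
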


To prove this theorem, we need a property of energy measures, called the \emph{energy image density property} in the case of Dirichlet forms (cf.\ \cite[Chapter 1, Theorem 7.1.1]{BH}), and also called the \emph{image density property} for $p$-energy measures (cf.\ \cite[Theorem 4.17]{KSdiff}).

\begin{thm}[{Special case of \cite[Theorem 4.17]{KSdiff}}]\label{thmeidm}
	If $f\in\core$, then $f_*\mu_{\langle f\rangle}\ll \Mc{L}(\Mb{R})$, where $\Mc{L}$ denotes Lebesgue measure.
\end{thm}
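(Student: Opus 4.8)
The plan is to prove the equivalent statement that the finite Borel measure $\rho:=f_*\nu_{\langle f\rangle}$ on $\Mb{R}$ has no part singular with respect to $\Mc{L}$. By Theorem \ref{thmunique} the family $\{\nu_{\langle f\rangle}\}$ coincides with $\{\mu_{\langle f\rangle}\}$, so the chain rule $\der\nu_{\langle\varphi\circ f\rangle}=|\varphi'\circ f|^p\der\nu_{\langle f\rangle}$ is available for every piecewise $C^1$ function $\varphi$; integrating it over $X$ and using the change of variables for the pushforward gives the identity
\[\Mc{E}(\varphi\circ f)=\nu_{\langle\varphi\circ f\rangle}(X)=\int_X|\varphi'\circ f|^p\der\nu_{\langle f\rangle}=\int_\Mb{R}|\varphi'|^p\der\rho,\]
which is essentially the only analytic input I will need, together with $\rho(\Mb{R})=\nu_{\langle f\rangle}(X)=\Mc{E}(f)$.

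Suppose for contradiction that the singular part of $\rho$ is nonzero. Since $\rho$ is Radon, its singular part is inner regular, so I can find a \emph{compact} $\Mc{L}$-null set $K\subset\Mb{R}$ with $\rho(K)>0$ (note $\rho(K)$ equals the singular mass on $K$, because the absolutely continuous part assigns $K$ measure zero). The compactness of $K$ is exactly what keeps me inside the piecewise-$C^1$ chain rule: combining outer regularity of the finite measure $\rho$, outer regularity of $\Mc{L}$ at $K$, and a finite subcover of $K$, I would choose for each $k$ a set $U_k\supset K$ that is a \emph{finite} union of open intervals with $\Mc{L}(U_k)\to0$ and $\rho(U_k)\to\rho(K)$, arranging in addition that the finitely many endpoints of $U_k$ avoid the at most countably many atoms of $\rho$.

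With $U_k$ fixed, set $\Phi_k(t):=\int_0^t\indi_{\Mb{R}\setminus U_k}\,\der s$. Then $\Phi_k$ is piecewise linear (hence piecewise $C^1$), $\Phi_k(0)=0$, $\LIP(\Phi_k)\le1$, and $\|\Phi_k-\id\|_\infty\le\Mc{L}(U_k)\to0$; thus $\Phi_k\circ f\in\core$ by Corollary \ref{corincl}~(3) and $\Phi_k\circ f\to f$ in $L^p(\Mf{m})$. Applying the displayed identity to $\varphi=\Phi_k$, and using $|\Phi_k'|^p=\indi_{\Mb{R}\setminus U_k}$ off the finitely many endpoints (which carry no $\rho$-mass by the choice of $U_k$), I obtain
\[\Mc{E}(\Phi_k\circ f)=\int_\Mb{R}\indi_{\Mb{R}\setminus U_k}\,\der\rho=\rho(\Mb{R})-\rho(U_k)=\Mc{E}(f)-\rho(U_k).\]
On the other hand $\Phi_k\circ f\to f$ in $L^p(\Mf{m})$ with $\sup_k\Mc{E}(\Phi_k\circ f)\le\Mc{E}(f)<\infty$, so the lower semicontinuity of $\Mc{E}$ along $L^p$-convergent sequences (\cite[Proposition 3.18(a)]{KSdiff}, as already invoked in Proposition \ref{propnMarkov}) yields $\Mc{E}(f)\le\liminf_{k\to\infty}\Mc{E}(\Phi_k\circ f)=\Mc{E}(f)-\lim_{k\to\infty}\rho(U_k)=\Mc{E}(f)-\rho(K)$. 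Hence $\rho(K)\le0$, contradicting $\rho(K)>0$. Therefore $\rho$ has no singular part, i.e. $f_*\nu_{\langle f\rangle}\ll\Mc{L}$.

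The main obstacle — and the reason the argument is routed through a compact null carrier $K$ — is that at this stage only the \emph{piecewise $C^1$} chain rule is available (Theorem \ref{thmchain3}, which upgrades it to locally Lipschitz $\varphi$, comes later and relies on the present theorem). This forces each flattening function $\Phi_k$ to have finitely many pieces, so I must first extract a compact carrier of the singular mass and then approximate it from outside by finite unions of intervals; the remaining difficulty is purely measure-theoretic bookkeeping, namely controlling the endpoint atoms of $\rho$ and verifying $\rho(U_k)\to\rho(K)$ while keeping $\Mc{L}(U_k)\to0$.
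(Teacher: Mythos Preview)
The paper does not give its own proof of this theorem; it is simply cited as a special case of \cite[Theorem 4.17]{KSdiff}. Your argument is a correct self-contained proof: the computation $\Mc{E}(\Phi_k\circ f)=\Mc{E}(f)-\rho(U_k)$ via the piecewise-$C^1$ chain rule of Theorem~\ref{thmunique} (equivalently Theorem~\ref{thmchain}, which is proved directly for any family $\{\nu_{\langle\cdot\rangle}\}$ satisfying \eqref{condmeasene}--\eqref{condmeasloc} and does not rely on the present statement) combined with the $L^p$-lower-semicontinuity of $\Mc{E}$ indeed forces $\rho(K)\le 0$. This is essentially the Bouleau--Hirsch argument (cf.\ \cite[Chapter~I, Theorem~7.1.1]{BH}) adapted to the $p$-energy setting, and presumably close to what the cited reference does. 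Two minor simplifications are available: first, your care about endpoint atoms of $\rho$ is unnecessary, since $\rho(\{a\})=\nu_{\langle f\rangle}(\{f=a\})=0$ for every $a\in\Mb{R}$ by \eqref{condmeasloc} (or Lemma~\ref{lemsmwloc}), so $\rho$ is already atomless; second, the precise convergence $\rho(U_k)\to\rho(K)$ is not needed, as $U_k\supset K$ gives $\rho(U_k)\ge\rho(K)>0$, which already contradicts $\Mc{E}(f)\le\Mc{E}(f)-\limsup_k\rho(U_k)$.
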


\begin{proof}[Proof of Theorem \ref{thmchain3}.]
	Note that $\Mc{L}(\{t\in\Mb{R}\mid \varphi'(t)\text{ does not exist}\})=0$ by Rademacher's theorem.
	Since $\|f\|_{\infty}<\infty$, we may assume $\varphi\in\LIP(\Mb{R})$ without loss of generality.
	Then there exist piecewise linear functions $\varphi_n$ $(n\ge 1)$ such that
	$
	\varphi_n(0)=0,\ \LIP(\varphi_n)\le \LIP(\varphi),\ \varphi_n\to\varphi \ \text{pointwise on }\Mb{R}$,
	and $\varphi_n'\to \varphi'$ $\Mc{L}$-a.e.\ (see, e.g., the proof of \cite[Theorem 4.1.4]{GP}).
	Therefore the same argument as in the proof of Theorem \ref{thmchain} applies, with
	\begin{gather*}
		\Bigl(\bigcup_{n\ge 1}2^{-n}\Mb{Z}\Bigr)\cup \{t\mid\varphi'(t)\text{ does not exist}\}
		\shortintertext{replaced by}
		\{t\in\Mb{R}\mid \lim_{n\to\infty}\varphi_n'(t)\ne \varphi'(t)\}\ \cup\ \{t\in\Mb{R}\mid \varphi'(t)\text{ does not exist}\},
	\end{gather*}
	using Theorem \ref{thmeidm}.
\end{proof}

\begin{rem}
	\begin{enumerate}
		\item For a further application of Theorem \ref{thmchain2}, see \cite[Theorem 4.15]{KSdiff}.
		\item Theorem \ref{thmchain3} does not extend to the case of $n$-variable functions in general.
		For example, if $\varphi(x_1,x_2):=x_1\vee x_2$, then
		$\frac{\der \mu_{\langle f;\varphi(g,g)\rangle}}{\der \mu}=\frac{\der \mu_{\langle f;g\rangle}}{\der \mu}$,
		but $\frac{\partial \varphi}{\partial x_i}\circ (g,g)$ $(i=1,2)$ is not defined on $X$.
	\end{enumerate}
\end{rem}

\section{\texorpdfstring{Korevaar--Schoen-type $p$-energy forms and canonical energy measures}{Korevaar--Schoen type p-energy forms and canonical energy measures}}\label{secKS}
In this section, we review the results of \cite{AB,KSKS} on $p$-energy measures associated with Korevaar--Schoen-type $p$-energy forms (named after Korevaar and Schoen \cite{KoSc}), and apply the results of the previous sections to this class.
Recall that we continue to assume Assumption \ref{asmspace}: in \cite{AB} the case $p=1$ is also considered, but we do not discuss it here.\par

We first introduce some notions following \cite{KSKS}.

\begin{defi}
	Let $\boldsymbol{k}=\{k_r\}_{r>0}$ be a family of $[0,\infty]$-valued Borel measurable functions on $X\times X$.
	For $f\in L^p(\Mf{m})$, define
	\begin{align*}
		J^{\boldsymbol{k}}_{p,r}(f)
		&:=\int_X\int_X |f(x)-f(y)|^p\,k_r(x,y)\,\der \Mf{m}(y)\,\der \Mf{m}(x),\\
		\Mf{B}^{\boldsymbol{k}}_{p,\infty}
		&:=\Bigl\{f\in L^p(\Mf{m}) \,\Big|\, \sup_{r>0}J^{\boldsymbol{k}}_{p,r}(f)<\infty \Bigr\},\\
		\|f\|_{\Mf{B}^{\boldsymbol{k}}_{p,\infty}}
		&:=\|f\|_{L^p(\Mf{m})}+\left(\sup_{r>0}J^{\boldsymbol{k}}_{p,r}(f)\right)^{1/p}
		\qquad \bigl(f\in\Mf{B}^{\boldsymbol{k}}_{p,\infty}\bigr).
	\end{align*}
	We say that $\boldsymbol{k}$ is \emph{asymptotically local} if there exists a map $\delta:(0,\infty)\to(0,\infty)$ such that
	$\lim_{r\downarrow 0}\delta(r)=0$ and
	\[
	\lim_{r\downarrow 0}\int_X\int_{X\setminus B_d(x,\delta(r))} k_r(x,y)\,\der \Mf{m}(y)\,\der \Mf{m}(x)=0.
	\]
	We say that \eqref{condWM} holds if there exists $C\ge 1$ such that
	\begin{equation}\label{condWM}
		\sup_{r>0}J^{\boldsymbol{k}}_{p,r}(f)\le C\liminf_{r\downarrow 0}J^{\boldsymbol{k}}_{p,r}(f)
		\tag{$\mathrm{WM}_{p}^{\boldsymbol{k}}$}
		\quad \text{for any }f\in\Mf{B}^{\boldsymbol{k}}_{p,\infty}.
	\end{equation}
\end{defi}

Condition \eqref{condWM} is called a \emph{weak monotonicity} type estimate; it was introduced in \cite{Bau}.
\begin{ex}
Assume that $(X,d,\Mf{m})$, $(\Mc{E},\Mc{F})$ and $\rho_p$ are as in Example \ref{exSC}. Let \[
	k_r(x,y)=\frac{\indi_{B_d(x,r)}(y)}{r^{\frac{\log (8\rho_p)}{\log 3}}\,\Mf{m}(B_d(x,r))},
	\qquad
	\boldsymbol{k}=\{k_r\}_{r>0}.
	\]
Then $\boldsymbol{k}$ is asymptotically local and \eqref{condWM} holds. Moreover, there exists $c>1$ such that $\Mc{F}=\Mf{B}^{\boldsymbol{k}}_{p,\infty}$, and for every $f\in\Mc{F}$, \[c^{-1}\sup_{r>0}J^{\boldsymbol{k}}_{p,r}(f)\le \Mc{E}(f)\le c\liminf_{r\downarrow 0}J^{\boldsymbol{k}}_{p,r}(f).\] See \cite{MS} for the proof and further details.
\end{ex}
\begin{thm}[{\cite[a part of Theorems 3.6, 3.8, 4.2, 4.5, 4.9, 4.11 and Proposition 4.7]{KSKS}; see also \cite[Proposition 2.3]{KSdiff}}]\label{thmKSKS}
	Suppose that $\boldsymbol{k}$ is asymptotically local and that \eqref{condWM} holds.
	Let $\{\tilde r_n\}_{n\ge 1}$ be any sequence of positive numbers with $\tilde r_n\to 0$ as $n\to\infty$.
	Then there exists a subsequence $\{r_n\}_{n\ge 1}$ such that, defining
	\[
	\Mc{E}_p^{\boldsymbol{k}}(f):=\lim_{n\to\infty}J^{\boldsymbol{k}}_{p,r_n}(f)\quad  (f\in \Mf{B}_{p,\infty}^{\boldsymbol{k}})
	\]
	the pair $(\Mc{E}_p^{\boldsymbol{k}},\Mf{B}^{\boldsymbol{k}}_{p,\infty})$ is a $p$-energy form satisfying
	\ref{defBanach}--\ref{defstrloc} and \ref{defclk}.
	In particular, if $\Mf{B}^{\boldsymbol{k}}_{p,\infty}\cap C_c(X)$ is dense in $(C_c(X),\|\cdot\|_\infty)$, then the closure
	\[
	\Mc{D}^{\boldsymbol{k},c}_{p,\infty}:=\ol{\Mf{B}^{\boldsymbol{k}}_{p,\infty}\cap C_c(X)}^{\|\cdot\|_{\Mf{B}^{\boldsymbol{k}}_{p,\infty}}},
	\]
	yields that  $(\Mc{E}_p^{\boldsymbol{k}},\Mc{D}^{\boldsymbol{k},c}_{p,\infty})$ is a $p$-energy form satisfying \ref{defBanach}--\ref{defclk}.
	In this case, there exists a unique family of Radon measures $\{\Mc{G}^{\boldsymbol{k}}_{p}\langle f\rangle\}_{f\in \Mc{D}^{\boldsymbol{k},c}_{p,\infty}}$ on $(X,d)$ satisfying \ref{condmeasene}--\ref{condmeasLeibniz} such that \eqref{eqfunctional} holds for any $f,g\in\Mf{B}^{\boldsymbol{k}}_{p,\infty}\cap C_c(X)$.
\end{thm}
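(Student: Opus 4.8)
The statement is largely a synthesis of results from \cite{KSKS}, so the plan is to quote that paper for the construction of $(\Mc{E}_p^{\boldsymbol{k}},\Mf{B}^{\boldsymbol{k}}_{p,\infty})$ and $(\Mc{E}_p^{\boldsymbol{k}},\Mc{D}^{\boldsymbol{k},c}_{p,\infty})$, and then to observe that the last assertion is immediate once one knows $(\Mc{E}_p^{\boldsymbol{k}},\Mc{D}^{\boldsymbol{k},c}_{p,\infty})$ satisfies Assumption \ref{asm1}. For the first part I would recall that each $J^{\boldsymbol{k}}_{p,r}(\cdot)^{1/p}$ is a seminorm on $L^p(\Mf{m})$ --- it is the $L^p(\Mf{m}\otimes\Mf{m})$-seminorm of $(x,y)\mapsto(u(x)-u(y))k_r(x,y)^{1/p}$ --- for which the $p$-Clarkson inequalities hold (\cite[Proposition 2.3]{KSdiff}). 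Since $u\mapsto J^{\boldsymbol{k}}_{p,r}(u)^{1/p}$ is then subadditive and, by \eqref{condWM}, uniformly bounded on $\Mf{B}^{\boldsymbol{k}}_{p,\infty}$-balls, a diagonal extraction along a countable dense subset of $\Mf{B}^{\boldsymbol{k}}_{p,\infty}$ produces a subsequence $\{r_n\}$ for which $\lim_n J^{\boldsymbol{k}}_{p,r_n}(u)=:\Mc{E}_p^{\boldsymbol{k}}(u)$ exists for every $u$; this is \cite[Theorems 3.6, 3.8]{KSKS}. Passing to the limit preserves the seminorm and $p$-Clarkson properties, giving the $p$-energy form structure and \eqref{defclk}; completeness \eqref{defBanach}, the strong local property \eqref{defstrloc} (where asymptotic locality kills the cross terms), and --- on restricting to $\Mc{D}^{\boldsymbol{k},c}_{p,\infty}$ under the density hypothesis --- regularity \eqref{defreg} are \cite[Theorems 4.2, 4.5 and Proposition 4.7]{KSKS}. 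For \eqref{defMarkov} I would note that a $1$-Lipschitz $\varphi$ with $\varphi(0)=0$ does not increase $J^{\boldsymbol{k}}_{p,r}(u)$ for any $r$, hence does not increase $\Mc{E}_p^{\boldsymbol{k}}$, and that \cite[Proposition 2.3]{KSdiff} shows this abstract contraction property yields \eqref{defMarkov} in the stated form.

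Before invoking the main theorem I would record that $\Mc{E}_p^{\boldsymbol{k}}(u)^{1/p}$ is comparable to $(\sup_{r>0}J^{\boldsymbol{k}}_{p,r}(u))^{1/p}$: indeed $\Mc{E}_p^{\boldsymbol{k}}(u)\le\sup_{r>0}J^{\boldsymbol{k}}_{p,r}(u)\le C\liminf_{r\downarrow0}J^{\boldsymbol{k}}_{p,r}(u)\le C\Mc{E}_p^{\boldsymbol{k}}(u)$ by \eqref{condWM}. Hence $\Mc{E}_1=(\Mc{E}_p^{\boldsymbol{k}}+\|\cdot\|_{L^p(\Mf{m})}^p)^{1/p}$ and $\|\cdot\|_{\Mf{B}^{\boldsymbol{k}}_{p,\infty}}$ induce the same topology on $\Mf{B}^{\boldsymbol{k}}_{p,\infty}$, so $\Mc{D}^{\boldsymbol{k},c}_{p,\infty}$ is exactly the $\Mc{E}_1$-closure of $\Mf{B}^{\boldsymbol{k}}_{p,\infty}\cap C_c(X)$ and, with $\Mc{F}=\Mc{D}^{\boldsymbol{k},c}_{p,\infty}$, one has $\core=\Mc{D}^{\boldsymbol{k},c}_{p,\infty}\cap C_c(X)\supseteq\Mf{B}^{\boldsymbol{k}}_{p,\infty}\cap C_c(X)$. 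Combined with the previous paragraph this shows $(\Mc{E}_p^{\boldsymbol{k}},\Mc{D}^{\boldsymbol{k},c}_{p,\infty})$ satisfies Assumption \ref{asm1}.

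Now the last assertion follows from the present paper. Applying Theorem \ref{thmunique} to $(\Mc{E}_p^{\boldsymbol{k}},\Mc{D}^{\boldsymbol{k},c}_{p,\infty})$ produces the unique family $\{\mu_{\langle u\rangle}\}_{u\in\Mc{D}^{\boldsymbol{k},c}_{p,\infty}}$ of Radon measures with \eqref{condmeasene}--\eqref{condmeasloc}; set $\Mc{G}^{\boldsymbol{k}}_p\langle u\rangle:=\mu_{\langle u\rangle}$. Fixing a minimal energy-dominant measure $\nu$ and identifying, by that same uniqueness, the measures $\nu_{\langle\cdot\rangle}$ of Section \ref{secchain} with $\mu_{\langle\cdot\rangle}$, Corollary \ref{corfunct} gives \eqref{eqfunctional} for every $f\in\core$, which contains $\Mf{B}^{\boldsymbol{k}}_{p,\infty}\cap C_c(X)$; and uniqueness of a family satisfying \eqref{condmeasene}--\eqref{condmeasloc} is exactly Theorem \ref{thmunique}, a fortiori among those additionally obeying \eqref{eqfunctional}.

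The only genuinely nontrivial work is bookkeeping: confirming that the contraction/Markov property and the regularity condition used in \cite{KSKS} coincide with \eqref{defMarkov}, \eqref{defstrloc} and \eqref{defreg} in the precise form of Assumption \ref{asm1} --- this is the role of \cite[Proposition 2.3]{KSdiff} --- and that \eqref{condWM} makes the $\Mc{E}_1$-norm and $\|\cdot\|_{\Mf{B}^{\boldsymbol{k}}_{p,\infty}}$ equivalent, so that $\Mc{D}^{\boldsymbol{k},c}_{p,\infty}$ and $\core$ are what one expects. No new analytic difficulty arises beyond \cite{KSKS}; the point of this section is that the abstract machinery of Theorem \ref{thmunique} applies and hence determines these Korevaar--Schoen-type energy measures uniquely.
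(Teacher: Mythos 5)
Your proposal is correct, and it matches the paper's treatment: Theorem \ref{thmKSKS} is stated as a quotation of \cite{KSKS} (with \cite[Proposition 2.3]{KSdiff} for the Clarkson inequalities), and the paper supplies no independent proof, so "cite the references and check the bookkeeping" is exactly what is intended. The one place where you genuinely diverge is the final assertion: you derive the existence and uniqueness of $\{\Mc{G}^{\boldsymbol{k}}_p\langle u\rangle\}$ from Theorem \ref{thmunique} and Corollary \ref{corfunct} of the present paper, whereas the paper attributes that sentence to \cite{KSKS}, where the measures are built directly from the positivity of the right-hand side of \eqref{eqfunctional} via the Riesz--Markov--Kakutani theorem; the identification of the two constructions is then the content of Corollary \ref{corKSem}, whose proof is the one-liner you have effectively written out. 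Your route is logically sound and not circular (Theorem \ref{thmunique} is proved independently of Section \ref{secKS}), and it has the small advantage of making the "unique with \eqref{condmeasene}--\eqref{condmeasloc}" clause self-contained; the paper's attribution keeps the theorem purely a literature citation so that Corollary \ref{corKSem} carries the new content. Your auxiliary observations --- the norm equivalence forced by \eqref{condWM} so that $\Mc{D}^{\boldsymbol{k},c}_{p,\infty}$ is the $\Mc{E}_1$-closure and $\core=\Mf{B}^{\boldsymbol{k}}_{p,\infty}\cap C_c(X)$, and the verification that normal contractions do not increase $J^{\boldsymbol{k}}_{p,r}$ --- are the right bookkeeping and are consistent with what \cite{KSKS} proves.
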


\begin{cor}\label{corKSem}
	The measure $\Mc{G}^{\boldsymbol{k}}_{p}\langle f\rangle$ in Theorem \ref{thmKSKS} coincides with the canonical $p$-energy measure of $f$ associated with $(\Mc{E}_p^{\boldsymbol{k}},\Mc{D}^{\boldsymbol{k},c}_{p,\infty})$.
\end{cor}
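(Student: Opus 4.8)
The plan is to deduce Corollary \ref{corKSem} purely from the uniqueness assertion of Theorem \ref{thmunique}, applied to the $p$-energy form $(\Mc{E}_p^{\boldsymbol{k}},\Mc{D}_{p,\infty}^{\boldsymbol{k},c})$ rather than to the form fixed in Assumption \ref{asm1}. First I would check that $(\Mc{E}_p^{\boldsymbol{k}},\Mc{D}_{p,\infty}^{\boldsymbol{k},c})$ itself satisfies Assumption \ref{asm1}: the hypotheses on $(X,d)$, $\Mf{m}$ and $p$ are unchanged, the properties \eqref{defBanach}--\eqref{defreg} for this form are exactly the part of Theorem \ref{thmKSKS} stating that $(\Mc{E}_p^{\boldsymbol{k}},\Mc{D}_{p,\infty}^{\boldsymbol{k},c})$ is a $p$-energy form with \eqref{defBanach}--\eqref{defreg}, and $\Mc{D}_{p,\infty}^{\boldsymbol{k},c}\subset L^p(\Mf{m})$ holds because $\Mf{B}^{\boldsymbol{k}}_{p,\infty}\subset L^p(\Mf{m})$ by definition while the $\|\cdot\|_{\Mf{B}^{\boldsymbol{k}}_{p,\infty}}$-closure of a subset of $L^p(\Mf{m})$ stays inside $L^p(\Mf{m})$, the $\Mf{B}^{\boldsymbol{k}}_{p,\infty}$-norm dominating the $L^p(\Mf{m})$-norm. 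Hence Theorem \ref{thmunique} applies to $(\Mc{E}_p^{\boldsymbol{k}},\Mc{D}_{p,\infty}^{\boldsymbol{k},c})$ and produces a \emph{unique} family $\{\mu_{\langle u\rangle}\}_{u\in\Mc{D}_{p,\infty}^{\boldsymbol{k},c}}$ of Radon measures satisfying \eqref{condmeasene}--\eqref{condmeasloc}; by definition these are the canonical $p$-energy measures associated with $(\Mc{E}_p^{\boldsymbol{k}},\Mc{D}_{p,\infty}^{\boldsymbol{k},c})$.

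Second, I would observe that Theorem \ref{thmKSKS} asserts precisely that $\{\Mc{G}^{\boldsymbol{k}}_p\langle u\rangle\}_{u\in\Mc{D}_{p,\infty}^{\boldsymbol{k},c}}$ is another family of Radon measures on $(X,d)$ satisfying \eqref{condmeasene}--\eqref{condmeasloc} for that same form (the reference-mass, seminorm/Clarkson and locality conditions being read off with $\Mc{E}$, $\Mc{F}$, $\core$ replaced by $\Mc{E}_p^{\boldsymbol{k}}$, $\Mc{D}_{p,\infty}^{\boldsymbol{k},c}$, $\Mc{D}_{p,\infty}^{\boldsymbol{k},c}\cap C_c(X)$). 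The uniqueness clause of Theorem \ref{thmunique} then forces $\Mc{G}^{\boldsymbol{k}}_p\langle u\rangle=\mu_{\langle u\rangle}$ for every $u\in\Mc{D}_{p,\infty}^{\boldsymbol{k},c}$, which is exactly the assertion. (The extra characterisation of $\Mc{G}^{\boldsymbol{k}}_p\langle u\rangle$ in \cite{KSKS} via the functional identity \eqref{eqfunctional} is consistent with, but not needed for, this argument.)

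The only point requiring genuine care — and the closest thing to an obstacle — is checking that the ``core'' and the topology implicit in Theorem \ref{thmKSKS} match those required by Assumption \ref{asm1} for $(\Mc{E}_p^{\boldsymbol{k}},\Mc{D}_{p,\infty}^{\boldsymbol{k},c})$; concretely, that $\Mf{B}^{\boldsymbol{k}}_{p,\infty}\cap C_c(X)=\Mc{D}_{p,\infty}^{\boldsymbol{k},c}\cap C_c(X)$ and that the $\|\cdot\|_{\Mf{B}^{\boldsymbol{k}}_{p,\infty}}$-topology agrees with the $(\Mc{E}_p^{\boldsymbol{k}}+\|\cdot\|_{L^p(\Mf{m})}^p)^{1/p}$-topology on $\Mc{D}_{p,\infty}^{\boldsymbol{k},c}$. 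Both are routine consequences of \eqref{condWM}: the chosen subsequential limit satisfies $\liminf_{r\downarrow 0}J^{\boldsymbol{k}}_{p,r}(u)\le\Mc{E}_p^{\boldsymbol{k}}(u)\le\sup_{r>0}J^{\boldsymbol{k}}_{p,r}(u)\le C\liminf_{r\downarrow 0}J^{\boldsymbol{k}}_{p,r}(u)$, so $\Mc{E}_p^{\boldsymbol{k}}(u)$ and $\sup_{r>0}J^{\boldsymbol{k}}_{p,r}(u)$ are comparable, the two norms are equivalent, $\Mc{D}_{p,\infty}^{\boldsymbol{k},c}\subset\Mf{B}^{\boldsymbol{k}}_{p,\infty}$, and intersecting with $C_c(X)$ gives the asserted identity of cores. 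Once this bookkeeping is settled, the corollary follows immediately from uniqueness.
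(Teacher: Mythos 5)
Your proposal is correct and follows essentially the same route as the paper, which simply invokes the uniqueness clause of Theorem \ref{thmunique} (applied to $(\Mc{E}_p^{\boldsymbol{k}},\Mc{D}_{p,\infty}^{\boldsymbol{k},c})$, which satisfies \eqref{defBanach}--\eqref{defreg} by Theorem \ref{thmKSKS}) together with the fact that $\{\Mc{G}^{\boldsymbol{k}}_p\langle u\rangle\}$ satisfies \eqref{condmeasene}--\eqref{condmeasloc}. The extra bookkeeping you carry out on the norm equivalence and the identification of cores is sound but not spelled out in the paper, which records the conclusion as immediate.
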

\begin{proof}
	This is immediate from Theorems \ref{thmunique} and \ref{thmKSKS}.
\end{proof}

Kajino and Shimizu~\cite{KSKS} established detailed properties of these energy forms and the associated energy measures, in particular the chain and Leibniz rules, in this setting.
In \cite{KSKS}, the positivity of the right-hand side of \eqref{eqfunctional} is proved directly, and the $p$-energy measures are defined via the Riesz--Markov--Kakutani representation theorem, as in the case $p=2$.
In contrast, Alonso-Ruiz and Baudoin~\cite{AB} constructed $p$-energy forms and the associated energy measures in a more abstract way for PI spaces.

\begin{defi}[PI space]\label{defPIsp}
	We call $(X,d,\Mf{m})$ a \emph{($p$-)PI space} (or a \emph{($p$-)Cheeger space}) if it satisfies the volume doubling property and the $(p,p)$-Poincar\'e inequality.
	That is, there exist $C,\lambda>1$ such that
	\[
	0<\Mf{m}(B_d(x,2r))<C\,\Mf{m}(B_d(x,r))<\infty
	\]
	and
	\[
	\left(\int_{B_d(x,r)}\bigl| f(y)-f_{B_d(x,r)} \bigr|^p\, \der\Mf{m}(y)\right)^{1/p}
	\le Cr\left(\int_{B_d(x,\lambda r)}(\Lip f)(y)^p\,\der\Mf{m}(y)\right)^{1/p}
	\]
	for any $x\in X$, $r>0$, and any locally Lipschitz function $f\in C(X)$, where
	\[
	f_{B_d(x,r)}:=\frac{\int_{B_d(x,r)} f(y)\,\der\Mf{m}(y)}{\Mf{m}(B_d(x,r))},
	\quad
	(\Lip f)(x):=\limsup_{r\downarrow 0}\sup_{y\in B_d(x,r)}\frac{|f(x)-f(y)|}{r}.
	\]
\end{defi}

\begin{defi}[$\Gamma$- and $\bar{\Gamma}$-convergences]
	Let $(Y,\Mf{O})$ be a first-countable space and let $(Z,\rho)$ be a locally compact, separable metric space.
	Write $\Mc{O}_\rho:=\{U\subset Z \mid U \text{ is open}\}$.
	\begin{enumerate}
		\item Let $F,F_n:Y\to[0,\infty]$ $(n\ge1)$. We say that $\{F_n\}_{n\ge 1}$ \emph{$\Gamma$-converges} to $F$, and write $F=\Gamma\text{-}\lim_{n\to\infty}F_n$, if the following conditions hold:
		\begin{description}
			\Gcond{gammaliminf} For any convergent sequence $\{y_n\}_{n\ge 1}$ in $Y$,
			\[
			F\Bigl(\lim_{n\to\infty}y_n\Bigr)\le \liminf_{n\to\infty} F_n(y_n).
			\]
			\Gcond{gammarecovery} For any $y\in Y$, there exists a sequence $\{y_n\}_{n\ge 1}$ in $Y$ such that
			\[
			y=\lim_{n\to\infty} y_n
			\quad \text{and}\quad
			F(y)\ge\limsup_{n\to\infty}F_n(y_n).
			\]
		\end{description}
		Such a sequence $\{y_n\}_{n\ge1}$ is called a \emph{recovery sequence} of $\{F_n\}_{n\ge1}$ at $y$.
		
		\item Let $F,F_n:Y\times\Mc{O}_\rho\to[0,\infty]$ $(n\ge1)$ be such that $F(y,U)\le F(y,V)$ and $F_n(y,U)\le F_n(y,V)$ whenever $U\subset V$.
		We say that $\{F_n\}_{n\ge 1}$ \emph{$\bar\Gamma$-converges} to $F$, and write $F=\bar\Gamma\text{-}\lim_{n\to\infty}F_n$, if the following conditions hold:
		\begin{description}
			\BGcond{bargammainner} For any $V\in\Mc{O}_\rho$ and $y\in Y$,
			$
			F(y,V)=\sup\{F(y,U)\mid U\Subset V \}$.
			\BGcond{bargammaliminf} For any $U\in \Mc{O}_\rho$ and any convergent sequence $\{y_n\}_{n\ge 1}$ in $Y$,
			\[
			F\Bigl(\lim_{n\to\infty}y_n,\, U\Bigr)\le \liminf_{n\to\infty} F_n(y_n,U).
			\]
			\BGcond{bargammarecovery} For any $y\in Y$ and any $U,V\in\Mc{O}\rho$ with $U\Subset V$, there exists a sequence $\{y_n\}_{n\ge 1}$ in $Y$ such that
			\[
			y=\lim_{n\to\infty} y_n
			\quad \text{and}\quad
			F(y,V)\ge\limsup_{n\to\infty}F_n(y_n,U).
			\]
		\end{description}
		We call such a sequence $\{y_n\}_{n\ge1}$ a \emph{quasi-recovery sequence} of $\{F_n\}_{n\ge1}$ at $(y,V)$ with respect to $U$.
	\end{enumerate}
\end{defi}

\begin{rem}
	\begin{enumerate}
		\item This characterization is based on \cite[Proposition 8.1 and Remark 16.5]{Dal}. In \cite{Dal}, only the case where $Y$ is an open subset of $\Mb{R}^n$ is treated. However, the results from \cite{Dal} that we implicitly use extend to the present setting.
		\item Condition \ref{bargammainner} is necessary, although it is not assumed in \cite[Definition 2.9]{AB}. Indeed, let
		\[
		\Phi_a(U):=\begin{cases}
			0 & \ol{U}\text{ is compact},\\
			a & \text{otherwise},
		\end{cases}
		\]
		for $a\ge0$ and $U\in\Mc{O}_\rho$. Define $F_n(y,U):=\Phi_1(U)$ (independent of $y$ and $n$), and $F(y,U):=\Phi_a(U)$.
		Then $F$ satisfies \ref{bargammaliminf} and \ref{bargammarecovery} for any $a\in[0,1]$, but \ref{bargammainner} holds only when $a=0$.
		Since $F(\cdot,U)=\Phi_1(U)$ (as a constant function) is also the $\Gamma$-limit of $F_n(\cdot,U)$, this shows that set-wise $\Gamma$-limits need not coincide with the $\bar\Gamma$-limit.
	\end{enumerate}
\end{rem}

\begin{thm}[{\cite[Theorems 3.1 and 4.5]{AB}}]\label{thmAB}
	Assume that $(X,d,\Mf{m})$ is a PI space.
	For any $U\in\Mc{O}_d$, set
	\[
	k^{(U)}_r(x,y)=\frac{\indi_{B_d(x,r)}(y)\,\indi_U(x)}{r^p\,\Mf{m}(B_d(x,r))},
	\qquad
	\boldsymbol{k}^{(U)}=\{k^{(U)}_r \}_{r>0}.
	\]
	Then there exist a sequence $\{r_n\}_{n\ge 1}$ with $r_n\to 0$ as $n\to\infty$, a functional $\Mc{E}^\Gamma_p: L^p(\Mf{m})\to[0,\infty]$, and a set function $\Gamma_p: L^p(\Mf{m})\times\Mc{B}(X)\to[0,\infty]$ such that $\Mc{E}^\Gamma_p=\Gamma_p(\cdot,X)$,
	$J^{\boldsymbol{k}^{(X)}}_{p,r_n}$ $\Gamma$-converges to $\Mc{E}^\Gamma_p$, and
	$J^{\boldsymbol{k}^{(\cdot)}}_{p,r_n}(\cdot)$ $\bar\Gamma$-converges to $\Gamma_p|_{L^p(\Mf{m})\times\Mc{O}_d}$.
	Moreover,
	\[\Mr{KS}^{1,p}(X)
	:=\Mf{B}^{\boldsymbol{k}^{(X)}}_{p,\infty}
	=\left\{\limsup_{r\downarrow 0} J^{\boldsymbol{k}^{(X)}}_{p,r}<\infty\right\}
	=\left\{\Mc{E}^\Gamma_p<\infty\right\},\]
	and $\Gamma_p(f,\cdot)$ is a finite Radon measure for each $f\in \Mr{KS}^{1,p}(X)$.
	Furthermore, there exists $C>1$ such that for any $f\in\Mr{KS}^{1,p}(X)$ and any $U,V,W\in\Mc{O}_d$ with $U\Subset V\Subset W$,
	\begin{gather*}
		C^{-1}\sup_{r>0}J^{\boldsymbol{k}^{(X)}}_{p,r}(f)\le \Mc{E}^\Gamma_p(f)\le\liminf_{r\downarrow 0}J^{\boldsymbol{k}^{(X)}}_{p,r}(f),\\
		C^{-1}\limsup_{r\downarrow 0}J^{\boldsymbol{k}^{(U)}}_{p,r}(f)\le \Gamma_p(f,V)\le \liminf_{n\to\infty} J^{\boldsymbol{k}^{(V)}}_{p,r_n}(f)\le C\liminf_{r\downarrow 0}J^{\boldsymbol{k}^{(W)}}_{p,r}(f).
	\end{gather*}
\end{thm}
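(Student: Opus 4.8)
The plan is to run the $\Gamma$-convergence scheme of \cite{AB}. First I would invoke the compactness of $\Gamma$-convergence: since $L^p(\Mf{m})$ is metrizable, from any positive null sequence the general theory (see \cite{Dal}) extracts a subsequence $\{r_n\}$ along which $J^{\boldsymbol{k}^{(X)}}_{p,r_n}$ $\Gamma$-converges to some $\Mc{E}^\Gamma_p\colon L^p(\Mf{m})\to[0,\infty]$. Fixing a countable base of $\Mc{O}_d$ and applying the localization and diagonalization method of \cite{Dal}, I would then refine $\{r_n\}$ to obtain a set function $\Gamma_p\colon L^p(\Mf{m})\times\Mc{O}_d\to[0,\infty]$ which is the $\bar\Gamma$-limit of $\{J^{\boldsymbol{k}^{(\cdot)}}_{p,r_n}\}$ once one defines it on general open sets by inner regularization (this is precisely the first defining condition of $\bar\Gamma$-convergence), and since $X$ is exhausted by relatively compact open sets, $\Mc{E}^\Gamma_p=\Gamma_p(\cdot,X)$.

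Next I would pin down the effective domain. The inclusion $\{\Mc{E}^\Gamma_p<\infty\}\supset\{\limsup_{r\downarrow0}J^{\boldsymbol{k}^{(X)}}_{p,r}<\infty\}$ and the bound $\Mc{E}^\Gamma_p(u)\le\liminf_{r\downarrow0}J^{\boldsymbol{k}^{(X)}}_{p,r}(u)$ follow at once from the $\Gamma$-liminf inequality with the constant sequence $y_n\equiv u$. For the reverse inclusion and the bound $C^{-1}\sup_{r>0}J^{\boldsymbol{k}^{(X)}}_{p,r}(u)\le\Mc{E}^\Gamma_p(u)$, the PI hypothesis is essential: volume doubling together with the $(p,p)$-Poincaré inequality gives, by a dyadic telescoping/chaining estimate comparing averages of $u$ over nested balls, the weak-monotonicity estimate $\sup_{R>0}J^{\boldsymbol{k}^{(X)}}_{p,R}(u)\le C\liminf_{r\downarrow0}J^{\boldsymbol{k}^{(X)}}_{p,r}(u)$, i.e.\ \eqref{condWM} holds for $\boldsymbol{k}^{(X)}$ (morally, on a PI space the Korevaar--Schoen space is the Cheeger/Newtonian Sobolev space); combined with a $\Gamma$-recovery sequence this yields the global chain of inequalities and the identities $\Mr{KS}^{1,p}(X)=\Mf{B}^{\boldsymbol{k}^{(X)}}_{p,\infty}=\{\Mc{E}^\Gamma_p<\infty\}$. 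Note that once \eqref{condWM} and the easy asymptotic locality of $\boldsymbol{k}^{(X)}$ are in hand, Theorem \ref{thmKSKS} already supplies the $p$-energy form structure, so only the localized $\bar\Gamma$-limit genuinely needs the steps below.

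The third and most substantial step is to prove that $\Gamma_p(u,\cdot)$ is (the trace of) a finite Radon measure for each $u\in\Mr{KS}^{1,p}(X)$, which I would do via the De Giorgi--Letta criterion applied to the open-set function $U\mapsto\Gamma_p(u,U)$: monotonicity and inner regularity are built into the $\bar\Gamma$-limit; superadditivity on disjoint open sets follows from the exact additivity $J^{\boldsymbol{k}^{(U_1\cup U_2)}}_{p,r}=J^{\boldsymbol{k}^{(U_1)}}_{p,r}+J^{\boldsymbol{k}^{(U_2)}}_{p,r}$ for disjoint $U_1,U_2$, passed to the limit through quasi-recovery sequences; and subadditivity requires the \emph{fundamental estimate}, namely gluing two quasi-recovery sequences on a two-set open cover by a Lipschitz cut-off and absorbing the resulting cross-term with the asymptotic locality of $\boldsymbol{k}^{(X)}$ (cutting off at scale $\delta(r)$ costs $o(1)$ in energy). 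Finiteness is then automatic since $\Gamma_p(u,X)=\Mc{E}^\Gamma_p(u)<\infty$.

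Finally, the displayed localized inequalities for $U\Subset V\Subset W$ follow by localizing the earlier arguments: $\Gamma_p(u,V)\le\liminf_n J^{\boldsymbol{k}^{(V)}}_{p,r_n}(u)$ is the $\bar\Gamma$-liminf inequality with the constant sequence, while $C^{-1}\limsup_{r\downarrow0}J^{\boldsymbol{k}^{(U)}}_{p,r}(u)\le\Gamma_p(u,V)$ and $\Gamma_p(u,V)\le C\liminf_{r\downarrow0}J^{\boldsymbol{k}^{(W)}}_{p,r}(u)$ come from localizing the chaining/weak-monotonicity estimate to the collar $W\setminus\overline{U}$ (respectively to $V$) combined with quasi-recovery sequences. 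The main obstacle I expect is the fundamental estimate of the third step, together with its localized chaining companion: one must quantitatively decouple the genuinely nonlocal functionals $J^{\boldsymbol{k}^{(\cdot)}}_{p,r}$ across an open cover \emph{uniformly in} $r$, and it is exactly there that asymptotic locality and the PI structure do the real work; the remainder is standard De Giorgi $\Gamma$-convergence bookkeeping.
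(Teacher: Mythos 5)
This statement is not proved in the paper at all: it is quoted verbatim from \cite[Theorems 3.1 and 4.5]{AB} (as the bracketed attribution in the theorem header indicates), and the paper's own contribution concerning these objects begins only afterwards, in Lemma \ref{lemKSloc} and Theorem \ref{thmABem}, where conditions \eqref{defBanach}--\eqref{defreg} and \eqref{condmeasene}--\eqref{condmeasloc} are verified for $(\Mc{E}^\Gamma_p,\Gamma_p)$. So there is no in-paper proof to compare your proposal against; what you have written is a reconstruction of the argument of \cite{AB}, and as such it follows the actual strategy of that reference: compactness and localization of $\Gamma$-convergence from \cite{Dal}, the De Giorgi--Letta criterion with the fundamental estimate (which the present paper later cites as \cite[Lemma 4.7]{AB}) to get the measure property, and the weak monotonicity estimate \eqref{condWM} on PI spaces (from \cite{Bau}) to identify the domain and obtain the two-sided comparison constants.

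One step in your sketch is looser than it should be: to get $C^{-1}\sup_{r>0}J^{\boldsymbol{k}^{(X)}}_{p,r}(u)\le\Mc{E}^\Gamma_p(u)$ you cannot simply combine \eqref{condWM} with a recovery sequence $\{u_n\}$, because the recovery sequence evaluates $J_{p,r_n}$ at the \emph{moving} scale $r_n$ and at the \emph{moving} function $u_n$, and $\liminf_{r\downarrow 0}J_{p,r}(u_n)$ is not controlled by $J_{p,r_n}(u_n)$. The route in \cite{AB}/\cite{Bau} interposes a lower semicontinuous intermediate functional (the Cheeger/Newtonian energy, or $\int(\Lip u)^p\,\der\Mf{m}$ on Lipschitz functions), shows $J_{p,R}$ is dominated by it uniformly in $R$ via the PI telescoping argument, and transfers the bound through the $\Gamma$-limit by lower semicontinuity of that intermediate object; your phrase ``combined with a $\Gamma$-recovery sequence'' hides exactly this. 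The same caveat applies to the localized inequalities at the end. Since all of this is the content of the cited reference rather than of the paper under review, it does not affect the paper, but if you intend your sketch as a self-contained proof you would need to make that interpolation explicit.
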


\begin{thm}\label{thmABem}
	The pair $(\Mc{E}^\Gamma_p,\Mr{KS}^{1,p}(X))$ is a $p$-energy form satisfying \ref{defBanach}--\ref{defclk}, and $\{\Gamma_p(f,\cdot)\}_{f\in\Mr{KS}^{1,p}(X)}$ is the family of canonical $p$-energy measures.
	In particular, \ref{condmeaschain} and \ref{condmeasLeibniz} hold with $\mu_{\langle f\rangle}=\Gamma_p(f,\cdot)$.
\end{thm}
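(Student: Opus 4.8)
The plan is to verify that $(\Mc{E}^\Gamma_p,\Mr{KS}^{1,p}(X))$ satisfies Assumption \ref{asm1}, and then to invoke the uniqueness statement in Theorem \ref{thmunique} to identify $\{\Gamma_p(u,\cdot)\}_{u\in\Mr{KS}^{1,p}(X)}$ with the canonical $p$-energy measures of that form. Once these two points are settled the last assertion is automatic, since Theorems \ref{thmchain2} and \ref{thmchain3} were proved for an \emph{arbitrary} family satisfying \eqref{condmeasene}--\eqref{condmeasloc} attached to a form satisfying Assumption \ref{asm1}, so they apply verbatim with $\nu_{\langle u\rangle}=\Gamma_p(u,\cdot)$. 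The external inputs are Theorem \ref{thmAB}, basic features of PI spaces, and standard facts about $\Gamma$- and $\bar\Gamma$-convergence.

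For Assumption \ref{asm1}: the ambient hypotheses are built into being a PI space, and $\Mr{KS}^{1,p}(X)=\Mf{B}^{\boldsymbol{k}^{(X)}}_{p,\infty}\subset L^p(\Mf{m})$ by definition. The key structural observation is that every approximant $J^{\boldsymbol{k}^{(X)}}_{p,r_n}(\cdot)^{1/p}$ is the pull-back, under the linear map $u\mapsto\bigl((x,y)\mapsto u(x)-u(y)\bigr)$, of an $L^p$-norm; hence it is a seminorm obeying the triangle and $p$-Clarkson inequalities, it does not increase under $1$-Lipschitz post-composition, and it is invariant under additive constants. These properties pass to the $\Gamma$-limit $\Mc{E}^\Gamma_p=\Gamma\text{-}\lim_n J^{\boldsymbol{k}^{(X)}}_{p,r_n}$: for the ``upper'' inequalities \eqref{eqCLK2}, \eqref{eqCLK3} and for \eqref{defMarkov} one fixes recovery sequences for $u$ and $v$, applies the relevant inequality for $J^{\boldsymbol{k}^{(X)}}_{p,r_n}$, and only then passes to the limit; for the ``lower'' inequalities \eqref{eqCLK1}, \eqref{eqCLK4} one instead starts from recovery sequences $w_n\to u+v$, $\tilde w_n\to u-v$ and uses $\tfrac12(w_n+\tilde w_n)\to u$, $\tfrac12(w_n-\tilde w_n)\to v$ as test sequences, the inequality surviving because $(s,t)\mapsto(s^{1/(p-1)}+t^{1/(p-1)})^{p-1}$ is monotone and continuous. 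This yields \eqref{defclk} and the seminorm property of $(\Mc{E}^\Gamma_p)^{1/p}$. Property \eqref{defBanach} follows from completeness of $\Mf{B}^{\boldsymbol{k}^{(X)}}_{p,\infty}$ and the sandwich $C^{-1}\sup_r J^{\boldsymbol{k}^{(X)}}_{p,r}\le\Mc{E}^\Gamma_p\le\liminf_{r\downarrow0}J^{\boldsymbol{k}^{(X)}}_{p,r}$ of Theorem \ref{thmAB} (which also shows $\Mc{E}_1$ is equivalent to $\|\cdot\|_{\Mf{B}^{\boldsymbol{k}^{(X)}}_{p,\infty}}$); and \eqref{defreg} from the density of compactly supported Lipschitz functions in $(\Mr{KS}^{1,p}(X),\Mc{E}_1)$ and in $(C_c(X),\|\cdot\|_\infty)$ on a PI space, cf.\ \cite{AB}. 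Property \eqref{defstrloc} is discussed below.

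For the identification, Theorem \ref{thmunique} reduces matters to checking that $\{\Gamma_p(u,\cdot)\}$ satisfies \eqref{condmeasene}--\eqref{condmeasloc}. Condition \eqref{condmeasene} is the identity $\Mc{E}^\Gamma_p=\Gamma_p(\cdot,X)$ of Theorem \ref{thmAB}. For \eqref{condmeasclk} one runs the same recovery-sequence argument as for \eqref{defclk}, now over the $\bar\Gamma$-convergence $J^{\boldsymbol{k}^{(\cdot)}}_{p,r_n}\to\Gamma_p$, obtaining \eqref{condmeasclk} for open sets, and extends to arbitrary Borel $A$ by the outer regularity of the finite Radon measures $\Gamma_p(u,\cdot)$: given $\epsilon>0$ one picks a single open $U\supset A$ with $\Gamma_p(u,U)$ and $\Gamma_p(v,U)$ both within $\epsilon$ of $\Gamma_p(u,A)$ and $\Gamma_p(v,A)$, and lets $\epsilon\downarrow0$. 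Condition \eqref{condmeasloc} is the strong-locality statement; one uses that the kernels $k^{(U)}_r$ are ``local at scale $r$'', so that $J^{\boldsymbol{k}^{(U)}}_{p,r}(f)=J^{\boldsymbol{k}^{(U)}}_{p,r}(g)$ for every $U\Subset A$ and every small $r$ whenever $(f-g)|_A$ is constant, together with the locality of the measure $\Gamma_p(u,\cdot)$ (its restriction to an open set depends only on the values of $u$ near that set) and its invariance under additive constants, to conclude $\Gamma_p(f,A)=\Gamma_p(g,A)$; the special case $f|_A=\mathrm{const}$ gives $\Gamma_p(f,A)=0$, and the same ingredients, applied after decomposing $X$ along sets of $\Gamma_p(f+g,\cdot)$-measure zero that separate $\supp[f]$ from $\supp[g+a]$, give \eqref{defstrloc} for $\Mc{E}^\Gamma_p$. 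With \eqref{condmeasene}--\eqref{condmeasloc} in hand, Theorem \ref{thmunique} forces $\Gamma_p(u,\cdot)=\mu_{\langle u\rangle}$ and hence the theorem.

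The main obstacle is precisely the strong-locality part, \eqref{defstrloc} and \eqref{condmeasloc}. The approximants $J^{\boldsymbol{k}}_{p,r}$ are genuinely nonlocal, and the only quantitative link with $\Gamma_p$ furnished by Theorem \ref{thmAB} carries a multiplicative constant $C>1$, which cannot by itself produce an exact additivity identity. The way through is to exploit the \emph{exact} scale-$r$ locality of the kernels $k^{(U)}_r$ (so that the relevant approximants literally coincide, not merely up to $C$, for all small $r$) and to combine it with the fact --- already part of Theorem \ref{thmAB} --- that $\Gamma_p(u,\cdot)$ is a genuine Radon measure, so that locality verified on an exhausting family of open subsets of $A$ and away from a null boundary upgrades to locality on $A$. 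Alternatively one may route the whole proof through Theorem \ref{thmKSKS} and Corollary \ref{corKSem}, which already encapsulate \eqref{defstrloc}, \eqref{condmeasloc} and the remaining axioms: since $\boldsymbol{k}^{(X)}$ is asymptotically local on a PI space and satisfies \eqref{condWM} by the sandwich of Theorem \ref{thmAB}, Theorem \ref{thmKSKS} applies to a thinning of $\{r_n\}$, and one is left only to check that the resulting subsequential limit of $J^{\boldsymbol{k}^{(X)}}_{p,r_n}$ coincides with the $\Gamma$-limit $\Mc{E}^\Gamma_p$ --- which is the same delicate point in another guise.
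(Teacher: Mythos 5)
Your overall architecture matches the paper's: verify Assumption \ref{asm1} for $(\Mc{E}^\Gamma_p,\Mr{KS}^{1,p}(X))$ and \eqref{condmeasene}--\eqref{condmeasloc} for $\{\Gamma_p(u,\cdot)\}_{u}$, then invoke the uniqueness in Theorem \ref{thmunique}. Your treatments of \eqref{defBanach}, \eqref{defclk}, \eqref{defMarkov}, \eqref{defreg} and \eqref{condmeasclk} are essentially the paper's arguments (recovery sequences together with the $L^p$-pullback structure of $E_{p,r}(\cdot,X)$, plus outer/inner regularity), and you correctly identify the locality conditions \eqref{defstrloc} and \eqref{condmeasloc} as the crux. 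But your resolution of that crux has a genuine gap.

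The condition \eqref{condmeasloc} must hold for \emph{arbitrary} Borel sets $A$ on which $f-g$ is constant, and the hard case is precisely when $A$ has empty interior, e.g.\ $A\subset\{f-g=c\}$ a level set. Your plan (exact scale-$r$ locality of the kernels on open $U\Subset A$, exhaustion of $A$ by open subsets, discarding a ``null boundary'') has nothing to exhaust in that case, and the assertion that the relevant level set is $\Gamma_p(u,\cdot)$-null is exactly the statement to be proved, not an available ingredient. The paper reduces \eqref{condmeasloc}, via the triangle inequality in \eqref{condmeasclk}, to showing $\Gamma_p(u,\{u=c\})=0$, and proves this by a dedicated argument: apply the normal contractions $\varphi_{2a}=\Mc{C}_{2a}^{\infty}+\Mc{C}_{-\infty}^{-2a}$ to a quasi-recovery sequence at $(u,\{u\ne 0\})$ with respect to $\{|u|>a\}$, localize the $\Gamma$-liminf inequality via Lemma \ref{lemKSloc}, and let $a\downarrow 0$ using Lemma \ref{lemconvconv}. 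Nothing in your sketch substitutes for this. Relatedly, both \eqref{defstrloc} and the localization just mentioned require turning $L^p$-convergent competitors that are \emph{not} recovery sequences (such as $\indi_{U'}u_n+\indi_{V'}v_n$) into lower bounds for $\Mc{E}^\Gamma_p$ restricted to subdomains; this rests on the ``fundamental estimate'' of \cite[Lemma 4.7]{AB}, packaged in the paper as Lemma \ref{lemKSloc}, which lets one glue a competitor to $u$ across an annulus with controlled error. You never invoke this or any substitute, and exact additivity in \eqref{defstrloc} does not follow from scale-$r$ locality of the kernels plus the two-sided comparison with constant $C>1$ alone. Finally, the alternative route through Theorem \ref{thmKSKS} and Corollary \ref{corKSem} is not actually available: identifying the subsequential pointwise limit $\Mc{E}^{\boldsymbol{k}}_p$ with the $\Gamma$-limit $\Mc{E}^\Gamma_p$ is stated in the paper's closing remark to be open in general, so that path cannot be completed.
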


For the proof of this theorem, we need a lemma that follows from the \emph{fundamental estimate} in \cite{AB} (introduced in \cite{Dal}).
To simplify notation, as in \cite{AB} we write
\[
E_{p,r}(f,U):=J^{\boldsymbol{k}^{(U)}}_{p,r}(f)
=\frac{1}{r^p}\int_U\int_{B_d(x,r)}\frac{|f(x)-f(y)|^p}{\Mf{m}(B_d(x,r))}\,\der\Mf{m}(y)\,\der\Mf{m}(x).
\]

\begin{lem}\label{lemKSloc}
	Let $f\in L^p(\Mf{m})\cap C_c(X)$, let $f_n\in L^p(\Mf{m})$ $(n\ge 1)$, and let $a\in\Mb{R}$ and $U\in\Mc{O}_d$.
	Assume that $f_n\to f$ in $L^p(\Mf{m})$ and that $\supp(f)\cap\supp(f+a)\Subset U$.
	Then
	\[
	\Mc{E}^\Gamma_p(f)\le \liminf_{n\to\infty}E_{p,r_n}(f_n,U).
	\]
\end{lem}
\begin{proof}
	Choose $V_1,V_2\in\Mc{O}_d$ such that
	$
	\supp(f)\cap\supp(f+a)\Subset V_1\Subset V_2\Subset U$.
	By \cite[Lemma 4.7]{AB}, there exist $C>0$ and $\Psi\in C(X)$ such that $0\le \Psi\le 1$, $\Psi|_{V_2}\equiv 1$, $\supp(\Psi)\subset U$, and
	\begin{align*}
		&E_{p,r_n}(\Psi f_n+(1-\Psi)f,X)-C\,\epsilon^{1-p}\|f_n-f\|^p_{L^p(\Mf{m})}\\
		\le&(1-\epsilon)^{1-p}\Bigl(E_{p,r_n}(f_n,U)+E_{p,r_n}(f, X\setminus\ol{V_1}) \Bigr)
	\end{align*}
	for any $\epsilon\in(0,1)$ and $n\ge 1$.
	Since $f\in C_c(X)$ implies $d(\{f=a\},\{f=0\})>0$ when $a\ne 0$, we have
	$E_{p,r_n}(f, X\setminus\ol{V_1})=0$ for all sufficiently large $n$.
	Taking $\liminf_{n\to\infty}$ and then letting $\epsilon\to 0$, we obtain
	\[
	\liminf_{n\to\infty}E_{p,r_n}(\Psi f_n+(1-\Psi)f,X)\le \liminf_{n\to\infty}E_{p,r_n}(f_n,U).
	\]
	Since $\Psi f_n+(1-\Psi)f\to f$ in $L^p(\Mf{m})$, the claim follows.
\end{proof}

\begin{proof}[Proof of Theorem \ref{thmABem}]
	By \cite[Proposition 4.11]{AB} (with $U=X$), the pair $(\Mc{E}^\Gamma_p,\Mr{KS}^{1,p}(X))$ is a $p$-energy form.
	By virtue of Theorem \ref{thmdomination} (cf.\ proof of Theorem \ref{thmunique}), it suffices to verify \ref{defBanach}--\ref{defclk} and \ref{condmeasene}--\ref{condmeasnorm}.
	\begin{description}
		\item[\ref{defBanach}] This follows from \cite[Theorem 3.1]{AB} and \cite[Theorem 3.1]{Bau}.
		\item[\ref{defMarkov}] This follows from \cite[Proposition 3.8]{AB}.
		
		\item[\ref{defstrloc}]
		Let $f,g\in\Mr{KS}^{1,p}(X)\cap C_c(X)$. Let $\{f_n\}_{n\ge 1}$ (resp. $\{g_n\}_{n\ge1},\{h_n\}_{n\ge 1}$) be a recovery sequence of $E_{p,r_n}(\cdot, X)$ at $f$ (resp. $g$, $f+g$).
		Take $U,U',V,V'\in\Mc{O}_d$ such that
		$
		\supp(f)\Subset U\Subset U'\subset\{g=a\},\
		\supp(g)\cap\supp(g+a)\Subset V\Subset V',\
		\ol{U'}\cap \ol{V'}=\emptyset$.
		Since
		\[
		\bigl(\indi_{U'}f_n+a\,\indi_{\{g=a\}\setminus V'}\bigr)+\indi_{V'}g_n \to f+g \quad \text{in }L^p(\Mf{m}),
		\]
		and $\Subset \supp(f+g)\cap\supp(f+g+a)\subset U\cup V$, we have
		\begin{align*}
			\Mc{E}^\Gamma_p(f)+\Mc{E}^\Gamma_p(g)
			&=\lim_{n\to\infty}\bigl(E_{p,r_n}(f_n,X)+E_{p,r_n}(g_n,X)\bigr)\\
			&\ge \liminf_{n\to\infty}\bigl(E_{p,r_n}(\indi_{U'}f_n, U)+E_{p,r_n}(\indi_{V'}g_n, V)\bigr)\\
			&= \liminf_{n\to\infty}\bigl(E_{p,r_n}(\indi_{U'}f_n+a\,\indi_{\{g=a\}\setminus V'}, U)+E_{p,r_n}(\indi_{V'}g_n, V)\bigr)\\
			&= \liminf_{n\to\infty}E_{p,r_n}\bigl(\indi_{U'}f_n+a\,\indi_{\{g=a\}\setminus V'}+\indi_{V'}g_n,\,U\cup V\bigr)\\
			&\ge \Mc{E}^\Gamma_p(f+g)
		\end{align*}
		by Lemma \ref{lemKSloc} and the definition of $E_{p,r}$.
		Similarly, we obtain
		\begin{align*}
			\Mc{E}^\Gamma_p(f+g)
			&=\lim_{n\to\infty}E_{p,r_n}(h_n,X)\\
			&\ge \liminf_{n\to\infty}E_{p,r_n}(h_n,U)+\liminf_{n\to\infty}E_{p,r_n}(h_n,V)\\
			&=\liminf_{n\to\infty}E_{p,r_n}(\indi_{U'}(h_n-a),U)+\liminf_{n\to\infty}E_{p,r_n}(\indi_{V'}h_n,V)\\
			&\ge \Mc{E}^\Gamma_p(f)+\Mc{E}^\Gamma_p(g).
		\end{align*}
		
		\item[\ref{defreg}]
		First note that $\Lip(f)\le \LIP(f|_{\supp(f)})\,\indi_{\supp(f)}$ for any $f$, and hence
		$\LIP(X)\cap C_c(X)\subset \Mr{KS}^{1,p}(X)\cap C_c(X)$ by \cite[Proposition 2.13]{AB}.
		For $f\in C_c(X)$, define $f_i(x):=\inf_{y\in X}\{f(y)+i\,d(x,y)\}\in \LIP(X)$.
		If $f$ is nonnegative, then $f_i\in C_c(X)$ and $f_i\to f$ as $i\to\infty$ in the uniform norm (see, e.g., \cite[Theorem 6.8]{Hei}).
		For general $f$, the functions $(f^+)_i-((-f)^+)_i\in \LIP(X)\cap C_c(X)$ converge uniformly to $f$.
		Together with \cite[Corollary 3.4]{AB}, this yields \ref{defreg}.
		
		\item[\ref{defclk}]
		Let $\tau_r$ be the measure on $X\times X$ defined by
		\[
		\der \tau_r=\frac{\indi_{B_d(x,r)}(y)}{r^p \Mf{m}(B_d(x,r))}\,\der (\Mf{m}\times \Mf{m})(x,y).
		\]
		Since $E_{p,r}(f,X)=\|f(x)-f(y)\|^p_{L^p(\tau_r)}$, the $p$-Clarkson inequalities for $E_{p,r}(f,X)^{1/p}$ hold on $\{E_{p,r}(f,X)<\infty\}$ by \cite[Theorem 2]{Clk}.
		It is known that $\Gamma$-limits preserve the $p$-Clarkson inequalities (see, e.g., \cite[Proof of Theorem 5.9]{Shi}); hence \ref{defclk} holds.
		
		\item[\ref{condmeasnorm}]
		This follows from \cite[Proposition 4.11]{AB} and the outer regularity of $\Gamma_p(f,\cdot)$.
		
		\item[\ref{condmeasene}]
		This is contained in \cite[Theorem 4.5]{AB}, but the proof is not written there; we include it for the reader's convenience.\par
		For $f\in L^p(\Mf{m})$, we have $\Gamma_p(f,X)\le \Mc{E}^\Gamma_p(f)$ by \cite[Remark 4.3]{AB}.
		To prove the reverse inequality, let $f\in\Mr{KS}^{1,p}(X)\cap C_c(X)$, and choose $U\in \Mc{O}_d$ such that $\supp(f)\Subset U\Subset X$.
		Let $\{f_n\}_{n\ge 1}$ be a quasi-recovery sequence of $\{E_{p,r_n}\}_{n\ge 1}$ at $(f,X)$ with respect to $U$.
		Then
		\[
		\Mc{E}^\Gamma_p(f)\le \liminf_{n\to\infty}E_{p,r_n}(f_n,U)\le \Gamma_p(f,X)
		\]
		by Lemma \ref{lemKSloc}.
		
		For general $f\in\Mr{KS}^{1,p}(X)$, there exists $\{f_n\}_{n\ge1}\subset \Mr{KS}^{1,p}(X)\cap C_c(X)$ such that
		$\lim_{n\to\infty}\Mc{E}^\Gamma_p(f-f_n)=0$ and $\limsup_{n\to\infty}\Gamma_p(f-f_n,X)=0$ by \ref{defreg}.
		Hence, by the triangle inequality in \ref{condmeasnorm},
		\[
		\Gamma_p(f,X)=\lim_{n\to\infty}\Gamma_p(f_n,X)=\lim_{n\to\infty}\Mc{E}^\Gamma_p(f_n)=\Mc{E}^\Gamma_p(f).
		\]
		
		\item[\ref{condmeasloc}]
		Since \ref{condmeasnorm} holds, it suffices to show that $\Gamma_p(f,\{f=c\})=0$ for any $f\in\Mr{KS}^{1,p}(X)\cap C_c(X)$ and $c\in\Mb{R}$.
		We first prove the case $c=0$.
		Let $\varphi_a:=\eta_{a,\infty}+\eta_{-\infty,-a}$ for $a>0$, where $\eta_I$ is defined in Section \ref{secunique}.
		Fix $f$ and $a$, and let $\{f_n\}_{n\ge 1}$ be a quasi-recovery sequence at $(f,\{f\ne 0\})$ with respect to $\{|f|>a\}$.
		Choose a subsequence $\{f_{n_k}\}_{k\ge 1}$ such that $f_{n_k}\to f$ $\Mf{m}$-a.e.\ and $|f_{n_k}|\le g$ for some $g\in L^p(\Mf{m})$ (cf.\ Proposition \ref{propnMarkov}).
		Since $\varphi_{2a}$ is a normal contraction, the dominated convergence theorem yields
		$
		\lim_{k\to\infty}\|\varphi_{2a}\circ f-\varphi_{2a}\circ f_{n_k}\|_{L^p(\Mf{m})}=0.
		$
		Therefore,
		\begin{align*}
			\Mc{E}^\Gamma_p(\varphi_{2a}\circ f)
			&\le \liminf_{k\to\infty}E_{p,r_{n_k}}(\varphi_{2a}\circ f_{n_k},\{|f|>a\})\\
			&\le \liminf_{k\to\infty}E_{p,r_{n_k}}(f_{n_k},\{|f|>a\})
			\le \Gamma_p(f,\{f\ne 0\})
		\end{align*}
		by Lemma \ref{lemKSloc} and the definition of $E_{p,r}$.
		Since $\lim_{n\to\infty}\Mc{E}^\Gamma_p(\varphi_{2^{1-n}}\circ f)=\Mc{E}^\Gamma_p(f)$ by the dominated convergence theorem and Lemma \ref{lemconvconv}, we obtain
		\[
		\Mc{E}^\Gamma_p(f)\ge \Gamma_p(f,X)\ge \Gamma_p(f,\{f\ne 0\})\ge \Mc{E}^\Gamma_p(f),\]
		and hence $
		\Gamma_p(f,\{f=0\})=0$.
		
		For $c\ne 0$, using the same argument as in \cite[Lemma 4.4]{AB}, we have, for all sufficiently large $n$,
		\[
		\Gamma_p\bigl(f,\{|f-c|<2^{-n}\}\bigr)
		=\Gamma_p\left(\left|f-\frac{c}{2}\right|-\frac{|c|}{2},\,\{|f-c|<2^{-n}\}\right).
		\]
		(Here we use that $(f-c)\indi_{\{|f-c|<2^{-n}\}}\in L^p(\Mf{m})$ when $|c|>2^{-n}$.)
		Therefore, by the case $c=0$ applied to $\left|f-\frac{c}{2}\right|-\frac{|c|}{2}$, we conclude that
		\[
		\Gamma_p(f,\{f=c\})
		=\Gamma_p\left(\left|f-\frac{c}{2}\right|-\frac{|c|}{2},\,\{f=c\}\right)=0.
		\]
	\end{description}
\end{proof}

\begin{rem}
	The uniqueness of $\Gamma_p$ with respect to $\Mc{E}^\Gamma_p$, and the chain and Leibniz rules for $\Gamma_p$, are new results in this setting.
	Since the assumptions of Theorem \ref{thmKSKS} are implied by those of Theorem \ref{thmAB}, we have
	$\Mc{G}^{\boldsymbol{k}}_p\langle f\rangle=\Gamma_p(f,\cdot)$ whenever $\Mc{E}^{\boldsymbol{k}}_p=\Mc{E}^\Gamma_p$, by Corollary \ref{corKSem} and Theorem \ref{thmABem}.
	However, it is unknown in general whether either $\Mc{E}^{\boldsymbol{k}}_p$ or $\Mc{E}^\Gamma_p$ is independent of the choice of the sequence $\{r_n\}_{n\ge 1}$ (cf.\ \cite[Remark 3.5]{AB}).
\end{rem}
\section*{Acknowledgments}
The author would like to thank Naotaka Kajino and Ryosuke Shimizu for their helpful comments on an earlier version of this paper and for fruitful discussions. In particular, thanks to a comment by Kajino, the author was able to remove an additional assumption from a previous version. 
\section*{Funding sources}
This work was supported by JSPS KAKENHI (grant number JP24KJ0022) and the Research Institute for Mathematical Sciences, an International Joint Usage/Research Center at Kyoto University.

\section*{Declaration of generative AI and AI-assisted technologies in the manuscript preparation process} During the preparation of this work the author used ChatGPT (OpenAI) in order to proofread and refine the English writing, improve clarity and readability of mathematical exposition, and check for potential typographical and consistency issues in the manuscript, during the revision. After using this service, the author reviewed and edited the content as needed and take full responsibility for the content of the published article.
\end{document}